\newcommand{\Whyp}[5]{\,\mbox{}_{#1}W_{#2}\!\left({#3};{#4};{#5}\right)}
\newcommand{\Wphyp}[6]{\,\sideset{_{#1}^{\phantom{\mid}}}{_{#2}^{#3}}
{\mathop{W}}\!\left({#4};{#5};{#6}\right)}
\newcommand{\qhyp}[5]{\,\mbox{}_{#1}\phi_{#2}\!\left(
\genfrac{}{}{0pt}{}{#3}{#4};#5\right)}
\newcommand{\qphyp}[6]{\,\sideset{_{#1}^{\phantom{\mid}}}{_{#2}^{#3}}
{\mathop{\phi}}\!\left(\genfrac{}{}{0pt}{}{#4}{#5};#6\right)}
\NewDocumentCommand{\qfrac}{smm}{%
 \dfrac{\IfBooleanT{#1}{\vphantom{\big|}}#2}{\mathstrut #3}%
}
\def\cprime{$'$}
\newtheorem{thm}[lemma]{Theorem}
\newtheorem{cor}[lemma]{Corollary}
\newtheorem{rem}[lemma]{Remark}
\newtheorem{lem}[lemma]{Lemma}
\def\eqnarray{\stepcounter{equation}\let\@currentlabel=\theequation
\global\@eqnswtrue
\tabskip\@centering\let\\=\@eqncr
$$\halign to \displaywidth\bgroup\hfil\global\@eqcnt\z@
 $\displaystyle\tabskip\z@{##}$&\global\@eqcnt\@ne
 \hfil$\displaystyle{{}##{}}$\hfil
 &\global\@eqcnt\tw@ $\displaystyle{##}$\hfil
 \tabskip\@centering&\llap{##}\tabskip\z@\cr}
\def\endeqnarray{\@@eqncr\egroup
 \global\advance\c@equation\m@ne$$\global\@ignoretrue}
\def\@yeqncr{\@ifnextchar [{\@xeqncr}{\@xeqncr[5pt]}}
\newcommand{\N}{{\mathbb N}}
\newcommand{\R}{{\mathbb R}}
\newcommand{\Z}{{\mathbb Z}}
\newcommand{\CC}{{\mathbb C}}
\newcommand{\expe}{{\mathrm e}}
\newcommand{\CCast}{{\mathbb C}^\ast}
\newcommand{\CCdag}{{\mathbb C}^\dag}
\newcommand{\SSS}{{\mathcal S}}
\numberwithin{equation}{section}
\numberwithin{corollary}{section}
\numberwithin{remark}{section}
\numberwithin{theorem}{section}
\numberwithin{lemma}{section}
\numberwithin{definition}{section}
\definecolor{darkgreen}{rgb}{0.0, 0.21, 0.06}
\begin{document}

\renewcommand{\PaperNumber}{***}

\FirstPageHeading

\ShortArticleName{Terminating symmetric basic hypergeometric 
representations and transformations}

\ArticleName{Terminating representations, 
transformations and summations for the $q$ and $q^{-1}$-symmetric 
subfamilies of the 
Askey--Wilson polynomials}

% Names of the authors for the title of the paper
\Author{Howard S. Cohl\,$^\ast\!\!\ $, Roberto 
S. Costas-Santos\,$^\dag\!$ and Linus Ge\,$^{\ddag}\!\!$}

\AuthorNameForHeading{H.~S.~Cohl, R.~S.~Costas-Santos and L.~Ge}
\Address{$^\ast$ Applied and Computational 
Mathematics Division, National Institute of Standards 
and Technology, Mission Viejo, CA 92694, USA
%Address of First Author, Country
\URLaddressD{
\href{http://www.nist.gov/itl/math/msg/howard-s-cohl.cfm}
{http://www.nist.gov/itl/math/msg/howard-s-cohl.cfm}
}
} % Address of First Author
\EmailD{howard.cohl@nist.gov} % E-mail address of First Author

\Address{$^\dag$ Department of Quantitative 
Methods, Universidad Loyola Andaluc\'ia, 
E-41704 Seville, Spain
} 
% Address of First Author
\URLaddressD{
\href{http://www.rscosan.com}
{http://www.rscosan.com}
}
\EmailD{rscosa@gmail.com} % E-mail address of First Author

\Address{$^{\ddag}$ Department of Mathematics,
The Ohio State University, Columbus, OH 43210, USA
% Address of First Author
}
\EmailD{ge.409@buckeyemail.osu.edu} % E-mail address of First Author

\ArticleDates{Received 30 June 2020 in final form ????; 
Published online ????}

\Abstract{In this article, we exhaustively explore the 
terminating basic hypergeometric representations and transformations 
of the $q$ and $q^{-1}$-symmetric subfamilies of the 
Askey--Wilson polynomials. These subfamilies are obtained by 
repeatedly setting one of the free parameters (not $q$) equal 
to zero until no parameters are left. 
These subfamilies (and their $q^{-1}$ counterparts) are the
continuous dual $q$-Hahn, Al-Salam--Chihara, continuous big $q$-Hermite,
and the continuous $q$-Hermite polynomials. From the terminating basic
hypergeometric representations of these polynomials, and due to
symmetry in their free parameters, we are able to exhaustively explore
the terminating basic hypergeometric transformation formulas which
these polynomials satisfy. We then study the terminating transformation structure which are implied by the terminating representations of these polynomials. We conclude by describing the symmetry group structure of the $q$-Askey scheme.
}

%``Symmetry, Integrability and Geometry: Methods and Applications''.}
\Keywords{terminating basic hypergeometric series; 
basic hypergeometric orthogonal polynomials; 
terminating basic hypergeometric transformations}

%Please type here List of Keywords for your article separated 
%by semicolon.
% Keywords required only for MST, PB, PMB, PM, JOA, JOB?
% Keywords:

\Classification{33D15, 33D45}
%{??????} % e.g. 35A30; 81Q05
%For 2010 Mathematics Subject Classification see
%http://www.ams.org/mathscinet/msc/msc2010.html

%\tableofcontents
%\addtocontents{toc}{\protect\color{black}}

%%%%%%%%%%%%%%%%%%%%%%%%%%%%%%%%%%%%%%%%%%%%%
\section{Introduction} 
\label{Introduction}
%%%%%%%%%%%%%%%%%%%%%%%%%%%%%%%%%%%%%%%%%%%%%

The work contained in this paper is concerned with transformation 
properties of the $q$ and $q^{-1}$-symmetric subfamilies of the Askey--Wilson polynomials 
(see e.g., \cite[Section 14.1]{Koekoeketal}). The polynomials in these symmetric subfamilies are the continuous dual $q$-Hahn polynomials, the Al-Salam--Chihara polynomials, the continuous big $q$-Hermite polynomials, the continuous $q$-Hermite polynomials and their $q^{-1}$-symmetric counterparts. By constructing all terminating basic hypergeometric representations of these symmetric subfamilies we are able to study the set of transformations between the specific terminating basic hypergeometric functions which appear as representations. By examining this structure we are able to study the symmetry group structure of these polynomials which has not as of yet been fully studied in the literature.

%%%%%%%%%%%%%%%%%%%%%%%%%%%%%%%%%%%%%%%%%%%%%
%\section{Preliminaries}%\label{Preliminaries}
%%%%%%%%%%%%%%%%%%%%%%%%%%%%%%%%%%%%%%%%%%%%%
We adopt the following set 
notations:~$\N_0:=\{0\}\cup
\N=\{0, 1, 2, \ldots \}$, and we use the 
sets $\Z$, $\R$, $\CC$ which represent 
the integers, real numbers and complex 
numbers respectively, 
$\CCast:=\CC\setminus\{0\}$,
and $\CCdag:=\CCast\setminus
\{z\in\CC: |z|=1\}$.
We also adopt the following notation 
and conventions.

Recall the notion of a multiset which extends 
the definition of a set where the multiplicity 
of elements is allowed. This notion
becomes important for hypergeometric functions, 
where numerator and denominator parameter 
entries may be identical. We adopt the 
following conventions for succinctly writing 
elements of multisets.
For a multiset of cardinality $k\in\mathbb N_0$, ${\bf b}:=\{b_1, \ldots,b_k\}$, 
$b_k\in{\mathbb C^\ast}$, 
$k\in\mathbb N$,
${\bf b}(x):=\{b_1(x), \ldots,b_k(x)\},$
and the reciprocal notation
${\bf b}^{-1}:=
\bigl\{b_1^{-1}, \ldots,b_k^{-1}\bigr\}$.
We will use the set notation 
${\mathbf b}_{[k]}:=
{\mathbf b}\setminus\{b_k\}$.
Let ${\bf a}:=\{a_1, a_2, a_3, a_4\}$, 
$b, a_k\in\mathbb C$, $k=1, 2, 3, 4$. 
Define
${\bf a}+b:=\{a_1+b, a_2+b, a_3+b, a_4+b\}$,
$a_{12}:=a_1a_2$,
$a_{13}:=a_1a_3$,
$a_{23}:=a_2a_3$,
$a_{123}:=a_1a_2a_3$,
$a_{1234}:=a_1a_2a_3a_4$, etc. 
Consider a sequence of complex numbers 
$\{a_k\}$, $k\in\mathbb N_0$. 
Furthermore, let $s,r\in\mathbb N_0$, 
with $s<r$. 
Then we assume that the the empty sum 
vanishes and the 
empty product 
is unity, namely
$\sum_{k=r}^s a_k=0, 
\quad \prod_{k=r}^s a_k=1$.
Throughout this paper we 
adopt the following conventions 
for succinctly writing certain 
sets and subsets.
To indicate sequential positive 
and negative elements, we write
$\pm a:=\{a,-a\}$.
We also adopt an analogous notation
for $z\in\CC^\ast$, $a\in\CC$,
$z^{\pm a}:=\{z^{a},z^{-a}\}$, e.g.,$z^{\pm}:=\{z,z^{-1}\}$.
%If $\pm$ appears in an expression, 
%but not in 
%a list, it is to be treated as normal.
In the same vein, consider a finite 
sequence $f_s\in\mathbb C$ with 
$s\in{\mathcal S}\subset \mathbb N$.
Then, the notation
$\{f_s\}$
represents the sequence of all complex 
numbers $f_s$ such that 
$s\in\SSS$.
Furthermore, consider some $p\in\SSS$, then 
the notation $\{f_s\}_{s\ne p}$ represents 
the sequence of all complex numbers $f_s$ 
such that $s\in\SSS\!\setminus\!\{p\}$.
Also for $n\le0$, we take
$\{a_1, \ldots,a_n\}:=\emptyset.$

Consider $q\in\mathbb C^\ast$ such that 
$|q|\ne 1$.
Define the sets 
$\Omega_q^n:=\{q^{-k}:n,k\in\mathbb 
N_0,~0\le k\le n-1\}$,
$\Omega_q:=\Omega_q^\infty=
\{q^{-k}:k\in\mathbb N_0\}$.
In order to obtain our derived identities, 
we rely on properties of the $q$-Pochhammer 
symbol ($q$-shifted factorial). 
For any $n\in \mathbb N_0$, $a,q\in \mathbb C$, 
%the Pochhammer symbol, and 
the $q$-Pochhammer symbol is defined as
\begin{equation}
%\hspace{-1cm}
(a;q)_n:=(1-a)(1-aq)\cdots(1-aq^{n-1}).
\label{poch.id:1}
\end{equation}
One may also define
\begin{equation}
(a;q)_\infty:=\prod_{n=0}^\infty (1-aq^{n}),\label{poch.id:2}
\end{equation}
where $|q|<1$.
Furthermore, define 
%for all $a,b\in\mathbb C$, 
%\[
%(a)_b:=\dfrac{\Gamma(a+b)}{\Gamma(a)},
%=\dfrac{(a)_\infty}{(a+b)_\infty},
%\]
%where $a+b\not\in-\mathbb N_0$, and 
\[
(a;q)_b:=\frac{(a;q)_\infty}
{(a q^b;q)_\infty}.
\]
where $a q^b\not \in \Omega_q$.
We will also use the common notational product convention
\begin{eqnarray}
%&&\hspace{-8cm}(a_1, \ldots,a_k)_b:=(a_1)_b\cdots(a_k)_b,\nonumber\\
&&\hspace{-8cm}(a_1, \ldots,a_k;q)_b:=(a_1;q)_b\cdots(a_k;q)_b,\nonumber
\end{eqnarray}
where $b\in{\mathbb C}\cup\{\infty\}$.
%$q$-Pochhammer symbols
%are used in $q$-special functions.
%We define the $q$-factorial as 
%\cite[(1.2.44)]{GaspRah}
%\[
%[0]_q!:=1,\ [n]_q!:=[1]_q[2]_q\cdots [n]_q,\quad n\in\N,
%\]
%where the $q$-number is defined as \cite[(1.8.1)]{Koekoeketal}
%\[
%[z]_q:=\frac{1-q^z}{1-q},\quad z\in \mathbb C.
%\]
%Note that $[n]_q!=(q;q)_n / (1-q)^n$, $n\in\mathbb N_0$.

The following properties for the $q$-Pochhammer 
symbol can be found in Koekoek et al. 
\cite [(1.8.7), (1.8.10-11), (1.8.14), (1.8.19), 
(1.8.21-22)]{Koekoeketal}, namely for appropriate 
values of $q,a\in\CCast$ and $n,k\in\mathbb N_0$:
\begin{eqnarray}
\label{poch.id:3} &&\hspace{-6.5cm}
%(a;q)_n=q^{\binom n 2}(-a)^n(a^{-1};q^{-1})_n,
(a;q^{-1})_n=(a^{-1};q)_n(-a)^nq^{-\binom{n}{2}},
\\[2mm] 
\label{poch.id:4}
&&\hspace{-6.5cm}(a;q)_{n+k}=(a;q)_k(aq^k;q)_n 
= (a;q)_n(aq^n;q)_k,\\[2mm] 
\label{poch.id:5}&&\hspace{-6.5cm} (a;q)_n=(q^{1-n}/a;q)_n(-a)^nq^{\binom{n}{2}},\\[2mm]
\label{poch.id:6}&&\hspace{-6.5cm}(aq^{-n};q)_{k}=q^{-nk}
\frac{(q/a;q)_n}{(q^{1-k}/a;q)_n}(a;q)_k,\\[2mm]
\label{poch.id:7}&&\hspace{-6.5cm}(a;q)_{2n}=(a,aq;q^2)_n=(\pm\sqrt{a},\pm\sqrt{qa};q)_n
\end{eqnarray}
\noindent Observe that by using (\ref{poch.id:1}) and 
\cite[(1.8.22)]{Koekoeketal}, one obtains
\begin{eqnarray}
&&\hspace{-7.3cm}(aq^n;q)_n=\frac{(\pm 
\sqrt{a},\pm \sqrt{aq};q)_n}{(a;q)_n},\quad 
a\not\in\Omega_q^n. 
\label{poch.id:9}
\end{eqnarray}
Note the equivalent representation of \eqref{poch.id:3} which is
very useful for obtaining limits which we
often need is
\begin{equation*}
%\hspace{-7.2cm}
a^n\left(\frac{x}{a};q\right)_n=
q^{\binom{n}{2}}(-x)^n \left(\frac{a}{x};q^{-1}\right)_n,
\end{equation*}
therefore
\begin{equation*}
%\hspace{-5.3cm}
\lim_{a\to0}\,a^n\left(\frac{x}{a};q\right)_n
=
\lim_{b\to\infty}\,\frac{1}{b^n}\left(xb;q\right)_n
=
q^{\binom{n}{2}}(-x)^n.
\end{equation*}
From \eqref{poch.id:5}, another useful limit representation is
\begin{equation}
%\hspace{-9.8cm} 
\lim_{\lambda\to\infty}\frac{(a\lambda;q)_n}{(b\lambda;q)_n}
=\left(\frac{a}{b}\right)^n.
\label{lambdamu}
\end{equation}
Other useful identities for simplifying $q$-Pochhammer symbol expressions
are for $a,b\in\mathbb C^{\ast}$, $n\in\mathbb N_0$, 
are
\begin{eqnarray}
&&\hspace{-7.9cm}(q^{-n}a;q)_{n}=q^{-\binom{n}{2}}\left(\frac{-a}{q}\right)^n
\left(\frac{q}{a};q\right)_n,\\
&&\hspace{-7.9cm}(q^{-n}a;q)_{2n}=q^{-\binom{n}{2}}\left(\frac{-a}{q}\right)^n(a;q)_n
\left(\frac{q}{a};q\right)_n,\\
&&\hspace{-7.9cm}\frac{(q^{-n}a;q)_n}{(q^{-n}b;q)_n}=
\left(\frac{a}{b}\right)^n\frac{(\frac{q}{a};q)_n}{\left(\frac{q}{b};q\right)_n},\\
&&\hspace{-7.9cm}\frac{(q^{-2n}a;q)_n}{(q^{-2n}b;q)_n}=
\left(\frac{a}{b}\right)^n\frac{
\left(\frac{q}{b};q\right)_{n}
\left(\frac{q}{a};q\right)_{2n}
}{
\left(\frac{q}{a};q\right)_{n}
\left(\frac{q}{b};q\right)_{2n}
}.
\end{eqnarray}
The basic hypergeometric series, which we 
will often use, is defined for
$q,z\in\CCast$ such that $|q|,|z|<1$, $s,r\in\mathbb N_0$, 
$b_j\not\in\Omega_q$, $j=1, \ldots,s$, as
\cite[(1.10.1)]{Koekoeketal}
\begin{equation}
\qhyp{r}{s}{a_1, \ldots,a_r}
{b_1, \ldots,b_s}
{q,z}
:=\sum_{k=0}^\infty
\frac{(a_1, \ldots,a_r;q)_k}
{(q,b_1, \ldots,b_s;q)_k}
\left((-1)^kq^{\binom k2}\right)^{1+s-r}
z^k.
\label{2.11}
\end{equation}
In the case where either $r=0$ or $s=0$, then we write a dash in place of the non-existent numerator or denominator parameters.
Note that we refer to a basic hypergeometric
series as {\it $\ell$-balanced} if
$q^\ell a_1\cdots a_r=b_1\cdots b_s$, 
and {\it balanced} if $\ell=1$.
A basic hypergeometric series ${}_{r+1}\phi_r$ is {\it well-poised} if 
the parameters satisfy the relations
\[
qa_1=b_1a_2=b_2a_3=\cdots=b_ra_{r+1}.
\]
It is {\it very-well-poised} if in addition, 
$\{a_2,a_3\}=\pm q\sqrt{a_1}$.
{\it Terminating} basic hypergeometric series 
which appear in basic hypergeometric orthogonal
polynomials, are defined as
\begin{equation}
\qhyp{r}{s}{q^{-n},a_1, \ldots,a_{r-1}}
{b_1, \ldots,b_s}{q,z}:=\sum_{k=0}^n
\frac{(q^{-n},a_1, \ldots,a_{r-1};q)_k}{(q,b_1, \ldots,b_s;q)_k}
\left((-1)^kq^{\binom k2}\right)^{1+s-r}z^k,
\label{2.12}
\end{equation}
where $b_j\not\in\Omega_q^n$, $j=1, \ldots,s$.
We will use the following alternative notation due to van de Bult and Rains \cite[p.~4]{vandeBultRains09}
for
basic hypergeometric series with zero parameter
entries, namely for $p\in\N_0$, one has
\begin{equation}\label{topzero} 
\qphyp{r+1}{s}{-p}{a_1, \ldots,a_{r+1}}
{b_1, \ldots,b_s}{q,z}
:=\qhyp{r+1+p}{s}{a_1, \ldots,
a_{r+1},\overbrace{0, \ldots,0}^{p}}
{b_1, \ldots,b_s}{q,z},
\end{equation}
\begin{equation}\label{botzero}
\qphyp{r+1}{s}{p}{a_1, \ldots,a_{r+1}}
{b_1, \ldots,b_s}{q,z}
:=\qhyp{r+1}{s+p}
{a_1, \ldots,a_{r+1}}
{b_1, \ldots,b_s,
\underbrace{0, \ldots,0}_{p}}{q,z},
\end{equation}
where $b_1, \ldots,b_s\not
\in\Omega_q^n\cup\{0\}$, and
\[
{}_{r+1}\phi_s^{0}
%{}_{r+1}\phi_{s,0}
={}_{r+1}\phi_s.
\]
Define the very-well-poised 
basic hypergeometric series
${}_{r+1}W_r$ \cite[(2.1.11)]{GaspRah}
\begin{equation}
\label{rpWr}
{}_{r+1}W_r(a;a_4, \ldots,a_{r+1};q,z)
:=\qhyp{r+1}{r}{\pm q\sqrt{a},a,a_4, \ldots,a_{r+1}}
{\pm \sqrt{a},\frac{qa}{a_4}, \ldots,\frac{qa}{a_{r+1}}}{q,z},
\end{equation} 
where $\sqrt{a},\frac{qa}{a_4}, \ldots,\frac{qa}{a_{r+1}}\not\in\Omega_q$. 
When the very-well-poised basic hypergeometric
series is terminating, then one has
\begin{equation}\label{eq:2.13}
{}_{r+1}W_r\left(a;q^{-n},a_5, \ldots,a_{r+1};q,z\right)
=\qhyp{r+1}{r}{q^{-n},\pm q\sqrt{a}, a, a_5, \ldots,a_{r+1}}
{\pm \sqrt{a},q^{n+1}a,\frac{qa}{a_5}, \ldots,\frac{qa}{a_{r+1}}}
{q,z},
\end{equation}
where $\sqrt{a},\frac{qa}{a_5}, \ldots,\frac{qa}{a_{r+1}}\not\in
\Omega_q^n\cup\{0\}$.
We adopt the following van de Bult and Rains extensions of terminating very-well-poised basic hypergeometric series, namely 
\begin{equation}\label{topWzero} 
\Wphyp{r+1}{r}{p}{a}
{q^{-n},a_5, \ldots,a_{r+1}}{q,z}:=\qphyp{r+1}{r}{p}{q^{-n}, 
\pm q\sqrt{a}, a, a_5, \ldots,a_{r+1}
}
{\pm\sqrt{a},q^{n+1}a,\frac{qa}{a_5}, \ldots,\frac{qa}{a_{r+1}}}{q,z},
\end{equation}
where $\sqrt{a},\frac{qa}{a_5}, \ldots,\frac{qa}{a_{r+1}}\not
\in\Omega_q^n\cup\{0\}$, and
${}_{r+1}W_{r}^{0}={}_{r+1}W_r$.

We will often use (frequently without mentioning) the 
limit transition 
formulas for basic hypergeometric series which can be found in \cite[(1.10.3-5)]{Koekoeketal}.
Using these limit-transition formulas for basic hypergeometric 
series, and using the definitions of the extensions to very-well-poised 
hypergeometric series \eqref{topWzero}, one can obtain the 
following limit transition formulas.
One may construct the following $\lambda\to\infty$ and the corresponding $\epsilon\to0$ limit transition formulas
by replacing $\lambda\mapsto\epsilon^{-1}$. 
\begin{lem}
\label{leminftyWhyp}
Let $p\in{\mathbb Z}$, $z\in{\mathbb C}^\ast$, $a,a_k,q\in\CCast$, 
such that $|q\ne1$, 
$\sqrt{a},
\frac{qa}{a_5}, \ldots,\frac{qa}
{a_{r+1}}\not\in\Omega_q$,
$k=5, \ldots,r+1$. 
Then
\begin{eqnarray}
&&\hspace{-0.5cm}\label{Whypliminfty1}\lim_{\lambda\to\infty}\!
\Wphyp{r+1}{r}{p}{a}{q^{-n},a_5, \ldots,a_r,\lambda a_{r+1}}{q,\frac{z}{\lambda}}
=\Wphyp{r}{r-1}{p+1}{a}{q^{-n},a_5, \ldots,a_r}{q,a_{r+1}z},\\
&&\hspace{-0.5cm}\label{Whypliminfty3}\lim_{\lambda\to\infty}\!
\Wphyp{r+1}{r}{p}{a}{q^{-n},a_5, \ldots,a_r,\frac{a_{r+1}}{\lambda}}{q,\lambda z}
=\Wphyp{r}{r-1}{p-1}{a}{q^{-n},a_5, \ldots,a_r}{q,\frac{a_{r+1}z}{qa}},\\
&&\hspace{-0.5cm}\label{Whyplimzero1}\lim_{\epsilon\to0}
\Wphyp{r+1}{r}{p}{a}
{q^{-n},a_5, \ldots,a_r,\epsilon a_{r+1}}
{q,\frac{z}{\epsilon}}
=\Wphyp{r}{r-1}{p-1}{a}{q^{-n},a_5, \ldots,a_r}{q,
\frac{a_{r+1}z}{qa}},\\
&&\hspace{-0.5cm}\label{Whyplimzero3}\lim_{\epsilon\to0}
\Wphyp{r+1}{r}{p}{a}
{q^{-n},a_5, \ldots,a_r,\frac{a_{r+1}}{\epsilon}}
{q,\epsilon z}
=\Wphyp{r}{r-1}{p+1}{a}{q^{-n},a_5, \ldots,a_r}{q,a_{r+1}z}.
\end{eqnarray}
\end{lem}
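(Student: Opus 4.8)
The plan is to reduce each of the four limits to the standard basic hypergeometric limit-transition formulas \cite[(1.10.3--5)]{Koekoeketal} by unwinding the very-well-poised notation via \eqref{topWzero}. First I would write the left-hand side of \eqref{Whypliminfty1} explicitly using \eqref{topWzero}: the ${}_{r+1}W_r^{(p)}$ with last argument $\lambda a_{r+1}$ is a ${}_{r+1}\phi_r^{(p)}$ (hence, after \eqref{topzero}, a genuine ${}_{r+1+p}\phi_r$ when $p\ge 0$, or a ${}_{r+1}\phi_{r-p}$ when $p<0$) in which exactly one numerator parameter equals $\lambda a_{r+1}$ and exactly one denominator parameter equals $qa/a_{r+1}\cdot\lambda^{-1}$, while the argument is $z/\lambda$. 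The key observation is that scaling a single numerator parameter by $\lambda$, a single denominator parameter by $\lambda^{-1}$, and the argument by $\lambda^{-1}$ is precisely the situation governed by the appropriate member of \cite[(1.10.3--5)]{Koekoeketal}; taking $\lambda\to\infty$ removes one numerator parameter $a_{r+1}$ and one denominator parameter, sending the argument to $a_{r+1}z$ (up to the bookkeeping factor coming from the $((-1)^kq^{\binom k2})^{1+s-r}$ normalization when the number of denominator parameters drops). Matching that residual series back against \eqref{topWzero} for ${}_{r}W_{r-1}$ shows the count of zero parameters has gone up by one, which is exactly the shift $p\mapsto p+1$ on the right-hand side.

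Next I would carry out the analogous unwinding for \eqref{Whypliminfty3}: here the last numerator parameter is $a_{r+1}/\lambda$ and the paired denominator parameter is $qa\lambda/a_{r+1}$, with argument $\lambda z$. This is the ``other'' direction of the same limit lemma — a numerator parameter tending to $0$, a denominator parameter tending to $\infty$, argument tending to $\infty$ in a compensating way — and the surviving series is a ${}_{r}\phi_{r-1}$ with one fewer denominator parameter but the \emph{same} count of explicit zeros coming from the $W$-structure, so that the extension index decreases: $p\mapsto p-1$, and the argument becomes $a_{r+1}z/(qa)$, the $(qa)^{-1}$ arising from the $qa/a_{r+1}$ denominator slot that is lost. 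The remaining two formulas \eqref{Whyplimzero1} and \eqref{Whyplimzero3} are then immediate: substituting $\lambda\mapsto\epsilon^{-1}$ into \eqref{Whypliminfty1} yields \eqref{Whyplimzero3} (note the indices on the variable match: $z/\lambda\mapsto\epsilon z$, $\lambda a_{r+1}\mapsto a_{r+1}/\epsilon$), and substituting $\lambda\mapsto\epsilon^{-1}$ into \eqref{Whypliminfty3} yields \eqref{Whyplimzero1}. So only the first two require genuine computation, and the last two are recorded for convenience.

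The main obstacle I anticipate is purely bookkeeping rather than conceptual: keeping careful track of the exponent $1+s-r$ in the factor $\left((-1)^kq^{\binom k2}\right)^{1+s-r}$ in \eqref{2.11}--\eqref{2.12} as one denominator parameter disappears, since dropping a denominator parameter changes $s$ by one and therefore multiplies the $k$-th summand by an extra $(-1)^kq^{\binom k2}$; this is exactly what converts the naive limit $\prod$-of-parameters ratio (governed by \eqref{lambdamu}, $\lim_{\lambda\to\infty}(a\lambda;q)_n/(b\lambda;q)_n=(a/b)^n$) into the stated shifted arguments $a_{r+1}z$ versus $a_{r+1}z/(qa)$. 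One must also verify that the non-vanishing hypotheses $\sqrt a,\ qa/a_5,\ldots,qa/a_{r+1}\notin\Omega_q$ survive the limit so that both sides are well-defined, and handle the sign conventions in $\pm q\sqrt a$, $\pm\sqrt a$ uniformly — but none of these present a real difficulty, and the argument is essentially a termwise passage to the limit justified because each series is terminating (the sum runs only to $k=n$ by the $q^{-n}$ numerator parameter), so no uniform-convergence subtleties arise.
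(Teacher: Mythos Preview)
Your proposal is correct and follows essentially the same route as the paper: the paper's proof is the one-line remark that the formulas ``follow directly from the definitions \eqref{topWzero} and the limit transitions \cite[(1.10.3--5)]{Koekoeketal},'' and you have simply unpacked that, including the observation (stated just before the lemma in the paper) that the $\epsilon\to 0$ formulas come from the $\lambda\to\infty$ ones via $\lambda\mapsto\epsilon^{-1}$. Your additional bookkeeping about the $\bigl((-1)^kq^{\binom k2}\bigr)^{1+s-r}$ factor and the termwise passage to the limit is sound and more explicit than what the paper records.
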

\begin{proof}
These follow directly from the definitions \eqref{topWzero} and the limit transitions \cite[(1.10.3-5)]{Koekoeketal}.
\end{proof}
\noindent

\noindent One important transformation for terminating basic hypergeometric series which 
we will use is Watson's $q$-analog of Whipple's theorem which 
relates a terminating balanced ${}_4\phi_3$ to a terminating very-well-poised ${}_8W_7$ (cf.~\cite[\href{http://dlmf.nist.gov/17.9.E15}{(17.9.15)}]{NIST:DLMF}, \cite[Corollary 11]{CohlCostasSantos20b})
\begin{equation}
\label{WatqWhipp}
\qhyp43{q^{-n},a,b,c}{d,e,f}{q,q}=
\frac{\left(\frac{de}{ab},\frac{de}{ac};q\right)}
{\left(\frac{de}{a},\frac{de}{abc};q\right)}
{}_8W_7\left(\frac{de}{qa};q^{-n},\frac{d}{a},\frac{e}{a},b,c;q,\frac{qa}{f}\right),
\end{equation}
where $q^{1-n}abc=def$.
In \cite[Exercise 1.4ii]{GaspRah}, one finds the inversion formula for
terminating basic hypergeometric series.

\begin{thm}[Gasper \& Rahman's (2004) Inversion Theorem] \label{thm:2.2}
Let $m, n, k, r, s\in\mathbb N_0$, $1\le k\le r$, $1\le m\le s$, $a_k\in\mathbb C$, $b_m\not \in\Omega^n_q$,
$q,z\in\mathbb C^\ast$ such that $|q|\ne 1$.
Then,
\begin{eqnarray}
&&\hspace{-3.5cm}\qhyp{r+1}{s}{q^{-n},a_1, \ldots,a_r}{b_1, \ldots,b_s}{q,z}= \frac{(a_1, \ldots,a_r;q)_n}
{(b_1, \ldots,b_s;q)_n}\left(\frac{z}{q}\right)^n\left((-1)^nq^{\binom{n}{2}}\right)^{s-r-1} \nonumber \\
&& \hspace{0.0cm}\label{inversion} \times \sum_{k=0}^n \frac{\left(q^{-n},\frac{q^{1-n}}{b_1}, \ldots,\frac{q^{1-n}}{b_s};q\right)_k}
{\left(q,\frac{q^{1-n}}{a_1}, \ldots,\frac{q^{1-n}}{a_r};q\right)_k} \left(\frac{b_1 \cdots b_s}{a_1 \cdots a_r}
\frac{q^{n+1}}{z}\right)^k.
\end{eqnarray}
\end{thm}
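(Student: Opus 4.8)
The plan is to start from the terminating basic hypergeometric series on the left-hand side, written out explicitly via the definition \eqref{2.12}, and to transform the summation index so that the sum runs in the reverse direction. Concretely, I would substitute $k\mapsto n-k$ in
\[
\sum_{k=0}^n\frac{(q^{-n},a_1,\ldots,a_r;q)_k}{(q,b_1,\ldots,b_s;q)_k}\left((-1)^kq^{\binom k2}\right)^{1+s-r}z^k,
\]
which is legitimate precisely because the series terminates at $k=n$; the factor $(q^{-n};q)_k$ vanishes for $k>n$. After this reversal, every $q$-Pochhammer symbol $(a;q)_{n-k}$ must be converted back into one of the form $(\,\cdot\,;q)_k$.

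The key computational step is the identity \eqref{poch.id:6}, namely $(aq^{-n};q)_k = q^{-nk}\,\dfrac{(q/a;q)_n}{(q^{1-k}/a;q)_n}\,(a;q)_k$, together with \eqref{poch.id:4} in the form $(a;q)_{n-k}=\dfrac{(a;q)_n}{(q^{1-n}/a;q)_k}\left(\dfrac{-q}{a}\right)^k q^{\binom k2 - nk}$ (obtained by combining \eqref{poch.id:4} and \eqref{poch.id:5}). Applying this to each numerator factor $(a_j;q)_{n-k}$ and each denominator factor $(b_m;q)_{n-k}$, as well as to $(q;q)_{n-k}$ and to $(q^{-n};q)_{n-k}$, pulls out the global prefactor
\[
\frac{(a_1,\ldots,a_r;q)_n}{(b_1,\ldots,b_s;q)_n}\left(\frac zq\right)^n\left((-1)^n q^{\binom n2}\right)^{s-r-1},
\]
and simultaneously replaces each $(a_j;q)_{n-k}$ by a reciprocal factor $1/(q^{1-n}/a_j;q)_k$ in the denominator of the new summand (and dually for the $b_m$), while the powers of $q^{\binom k2}$, $q^{nk}$, and the monomial $z^k$ recombine into the displayed ratio $\bigl(\tfrac{b_1\cdots b_s}{a_1\cdots a_r}\,\tfrac{q^{n+1}}{z}\bigr)^k$ and the correctly signed Gaussian factor. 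The factor $(q^{-n};q)_{n-k}$ handles itself: reversing it produces $(q^{-n};q)_k$ again up to an explicit power of $q$ and sign, which is exactly what is needed so that the new series is again of the terminating type \eqref{2.12} with leading parameter $q^{-n}$.

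I expect the main obstacle to be purely bookkeeping: keeping track of the many powers of $q$ — the $\binom k2$, the $\binom{n-k}{2}$, the $-nk$ contributions, and the exponents $1+s-r$ versus $s-r-1$ — and of the signs $(-1)^k$, $(-1)^{n-k}$, and verifying that they collapse exactly into $\bigl((-1)^nq^{\binom n2}\bigr)^{s-r-1}$ times the $k$-dependent monomial with no residue. A clean way to organize this is to verify the identity at the level of the ratio of consecutive terms (the $q$-analog of the term-ratio test): one checks that both sides, viewed as formal sums $\sum c_k$, have term ratios $c_{k+1}/c_k$ that are rational in $q^k$ and agree, and that the $k=0$ terms match after accounting for the prefactor; since both series terminate, equality of term ratios and of a single term forces equality of the sums. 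This reduces the proof to a finite algebraic check with the Pochhammer recurrences \eqref{poch.id:4}–\eqref{poch.id:6}. Finally I would note the hypotheses $b_m\notin\Omega_q^n$ guarantee no denominator factor $(b_m;q)_k$ or $(q^{1-n}/a_j;q)_k$ vanishes in the relevant range, and $z\in\mathbb C^\ast$ ensures the factor $q^{n+1}/z$ makes sense, so the manipulation is valid.
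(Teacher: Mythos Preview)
Your approach is correct and is precisely the standard argument: reverse the order of summation via $k\mapsto n-k$ and rewrite each $(a;q)_{n-k}$ using the combination of \eqref{poch.id:4} and \eqref{poch.id:5} that you display. The identity $(a;q)_{n-k}=\dfrac{(a;q)_n}{(q^{1-n}/a;q)_k}\bigl(-q/a\bigr)^k q^{\binom{k}{2}-nk}$ is right, and once applied uniformly to $q^{-n}$, to each $a_j$, to each $b_m$, and to the factor $(q;q)_{n-k}$, the powers of $q$ and signs collapse exactly as claimed.

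Note, however, that the paper does not give its own proof of this statement: Theorem~\ref{thm:2.2} is simply quoted from Gasper and Rahman \cite[Exercise~1.4ii]{GaspRah} as a known inversion formula, and then used as input for Lemma~\ref{lem:2.3} and its corollaries. So there is no ``paper's proof'' to compare against; your proposal supplies exactly the computation that the cited exercise asks the reader to carry out. One small remark on your closing sentence: the condition $b_m\notin\Omega_q^n$ controls the vanishing of $(b_m;q)_k$ on the left and of $(b_m;q)_n$ in the prefactor, but it does not by itself prevent $(q^{1-n}/a_j;q)_k$ from vanishing on the right; when some $a_j$ lies in $\Omega_q^n$ the corresponding zero in the denominator is matched by a zero in the prefactor $(a_j;q)_n$, and the identity should be read by continuity in $a_j$.
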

\noindent From the above inversion formula \eqref{inversion},
one may derive the following useful terminating basic hypergeometric transformation
lemma.
\begin{lem}
\label{lem:2.3}
Let $p\in\Z$, $r\in\N_0\cup\{-1\}$, 
$m, n, k, s\in\mathbb N_0$, $1\le k\le r$, $1\le m\le s$, 
$z\in\CCast$, $a_k\in\mathbb C$, $b_m\not \in\Omega^n_q$,
$q\in\mathbb C^\ast$ such that $|q|\ne 1$.
Then, one has
\begin{eqnarray}
&&\hspace{-2.7cm}\qphyp{r+1}{s}{p}{q^{-n},a_1, \ldots,a_r}{b_1, \ldots,b_{s}
}{q,z}= \frac{(a_1, \ldots,a_r; q)_n}{(b_1, \ldots,b_{s};q)_n}
\left(\frac{z}{q}\right)^n\left((-1)^nq^{\binom{n}{2}}\right)^{s-r+p-1}
\nonumber \\ && \hspace{0.0cm}\times \qphyp{s+1}{r}{s-r+p}
{q^{-n},\frac{q^{1-n}}{b_1}, \ldots,\frac{q^{1-n}}{b_{s}}}
{\frac{q^{1-n}}{a_1}, \ldots,\frac{q^{1-n}}{a_r}
}{q,\frac{b_1 \cdots b_{s}}
{a_1 \cdots a_r}\frac{q^{(1-p)n+p+1}}{z}}.
\label{ivg4}
\end{eqnarray}
\end{lem}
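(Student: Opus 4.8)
The plan is to derive \eqref{ivg4} from the ordinary inversion formula \eqref{inversion} of Theorem~\ref{thm:2.2} by adjoining auxiliary parameters and letting them tend to $0$. First I would note that the case $p=0$ is precisely Theorem~\ref{thm:2.2}: when $p=0$ the superscript on the right of \eqref{ivg4} is $s-r$, so by \eqref{botzero} together with \eqref{2.12} the balancing factor $\bigl((-1)^kq^{\binom{k}{2}}\bigr)^{1+r+(s-r)-(s+1)}=1$ is trivial, and \eqref{ivg4} collapses term by term onto \eqref{inversion}. It then remains to handle $p\ge1$ and $p\le-1$.

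For $p\ge1$ I would apply \eqref{inversion} with $s$ replaced by $s+p$ and with $p$ additional denominator parameters $b_{s+1}=c_1,\ldots,b_{s+p}=c_p$ chosen generically in $\CCast\setminus\Omega_q^n$, and then let $c_1,\ldots,c_p\to0$. On the left $(c_i;q)_k\to1$ for every $k$, so the resulting ${}_{r+1}\phi_{s+p}$ degenerates, via \eqref{botzero}, to the left-hand side of \eqref{ivg4}. In the prefactor the factors $(c_i;q)_n\to1$ disappear, while the exponent of $(-1)^nq^{\binom{n}{2}}$ is already $(s+p)-r-1=s-r+p-1$, which is the exponent in \eqref{ivg4}. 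In the sum I would pair each Pochhammer symbol $(q^{1-n}/c_i;q)_k$ with the factor $c_i^k$ carried by the series argument and use $c_i^k\,(q^{1-n}/c_i;q)_k\to(-1)^kq^{(1-n)k+\binom{k}{2}}$, which follows from \eqref{poch.id:5}; the $p$ resulting contributions turn the leftover argument $\bigl(b_1\cdots b_s\,q^{n+1}/z\bigr)^k$, together with the trivial balancing factor of \eqref{inversion}, into $\bigl((-1)^kq^{\binom{k}{2}}\bigr)^p\bigl(b_1\cdots b_s\,q^{(1-p)n+p+1}/z\bigr)^k$, the $q$-powers matching because $p(1-n)+(n+1)=(1-p)n+p+1$. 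Since the limiting series then has the $s+1$ numerator parameters $q^{-n},q^{1-n}/b_1,\ldots,q^{1-n}/b_s$, the $r$ denominator parameters $q^{1-n}/a_1,\ldots,q^{1-n}/a_r$, and overall balancing factor $\bigl((-1)^kq^{\binom{k}{2}}\bigr)^p$ with $1+r+(s-r+p)-(s+1)=p$, it is exactly the ${}_{s+1}\phi_r^{\,s-r+p}$ series on the right of \eqref{ivg4}, proving \eqref{ivg4} for $p\ge1$.

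The case $p\le-1$ is dual: writing $p=-p'$, I would instead apply \eqref{inversion} with $r$ replaced by $r+p'$ and with $p'$ additional numerator parameters $a_{r+1}=c_1,\ldots,a_{r+p'}=c_{p'}$, and let $c_1,\ldots,c_{p'}\to0$; by \eqref{topzero} the left side becomes the left-hand side of \eqref{ivg4} and the prefactor exponent becomes $s-(r+p')-1=s-r+p-1$. Here the $c_i$ enter the sum through $1/\bigl(c_i^k\,(q^{1-n}/c_i;q)_k\bigr)\to(-1)^kq^{-(1-n)k-\binom{k}{2}}$, and the mirror identity $-p'(1-n)+(n+1)=(1+p')n-p'+1=(1-p)n+p+1$ reassembles the right-hand side as before. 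The degenerate cases $r=-1$ (no free $a$-parameters, numerator $\{q^{-n}\}$ only) and the cases of small $s$ I would dispose of by the same finite manipulation, reading all parameter lists through the empty-list and empty-product conventions of Section~\ref{Introduction}.

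I expect the only genuinely delicate point to be the limit interchange. Because $(q^{1-n}/c_i;q)_k\to\infty$ as $c_i\to0$, one may not pass to the limit inside the individual Pochhammer symbols; the crucial step is to first combine each such factor with the compensating power $c_i^{\pm k}$ hidden in the series argument, after which every side is a finite sum over $0\le k\le n$ of rational functions of the $c_i$ that are regular at $c_i=0$, so the limit is immediate term by term. All that then remains is the elementary bookkeeping of the powers of $q$ and of $-1$, and of the series argument, sketched above.
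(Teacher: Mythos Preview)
Your proposal is correct and follows essentially the same route as the paper: start from the ordinary inversion formula \eqref{inversion}, adjoin generic nonzero parameters so that Theorem~\ref{thm:2.2} applies, and then specialize those parameters to zero via the standard limit transition identities to recover the $p$-superscript series on both sides. Your case split $p\ge1$ versus $p\le-1$ and the explicit tracking of the factors $c_i^{\pm k}(q^{1-n}/c_i;q)_k^{\pm1}$ make the mechanism more transparent than the paper's terse sketch, but the underlying idea is the same.
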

\begin{proof}
In a straightforward calculation, if we write \eqref{inversion} and we 
apply \eqref{poch.id:3} assuming all the parameters are nonzero, and then 
we apply the above limit transition identities 
one obtains 
\eqref{ivg4}.
\end{proof}

\begin{cor}\label{cor:2.4}
Let $n,r\in\mathbb N_0$, $q,z\in\mathbb C^\ast$ such that $|q|\ne 1$, and 
for $1\le k\le r$, let $a_k, b_k\not\in\Omega^n_q\cup\{0\}$.
Then,
\begin{eqnarray}
&&\hspace{-1.2cm}\qhyp{r+1}{r}{q^{-n},a_1, \ldots,a_r}{b_1, \ldots,b_r}{q,z}\nonumber\\
&&=
\label{cor:2.4:r1}
q^{-\binom{n}{2}}
(-1)^n
\frac{(a_1, \ldots,a_r;q)_n}
{(b_1, \ldots,b_r;q)_n}
\qhyp{r+1}{r}{q^{-n},
\frac{q^{1-n}}{b_1}, \ldots,
\frac{q^{1-n}}{b_r}}
{\frac{q^{1-n}}{a_1}, \ldots,
\frac{q^{1-n}}{a_r}}{q,
\frac{q^{n+1}}{z}\frac{b_1\cdots b_r}
{a_1\cdots a_r}}.
\end{eqnarray}
\end{cor}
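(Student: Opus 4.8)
The plan is to deduce Corollary~\ref{cor:2.4} as the special case $p=0$, $s=r$ of Lemma~\ref{lem:2.3}. First I would set $p=0$ and $s=r$ in \eqref{ivg4}. On the left-hand side one has $\qphyp{r+1}{r}{0}{q^{-n},a_1,\ldots,a_r}{b_1,\ldots,b_r}{q,z}$, which by the convention ${}_{r+1}\phi_r^{0}={}_{r+1}\phi_r$ recorded right after \eqref{botzero} is just $\qhyp{r+1}{r}{q^{-n},a_1,\ldots,a_r}{b_1,\ldots,b_r}{q,z}$, matching the left side of \eqref{cor:2.4:r1}.

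Next I would simplify the prefactor. With $s=r$ and $p=0$ the exponent $s-r+p-1$ collapses to $-1$, so $\left((-1)^nq^{\binom n2}\right)^{s-r+p-1}=\left((-1)^nq^{\binom n2}\right)^{-1}=(-1)^n q^{-\binom n2}$ (using that $(-1)^{-n}=(-1)^n$). Multiplying by the surviving factor $(z/q)^n$ and by the ratio $(a_1,\ldots,a_r;q)_n/(b_1,\ldots,b_r;q)_n$ gives
\[
\frac{(a_1,\ldots,a_r;q)_n}{(b_1,\ldots,b_r;q)_n}\left(\frac zq\right)^n(-1)^n q^{-\binom n2}.
\]
I then need this to equal the prefactor in \eqref{cor:2.4:r1}, namely $q^{-\binom n2}(-1)^n (a_1,\ldots,a_r;q)_n/(b_1,\ldots,b_r;q)_n$; the discrepancy is exactly a factor $(z/q)^n$, which I would absorb into the argument of the ${}_{r+1}\phi_r$ on the right.

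For the right-hand ${}_{r+1}\phi_r$, with $s=r$ and $p=0$ the van~de~Bult--Rains superscript $s-r+p=0$, so $\qphyp{r+1}{r}{0}{\cdots}{\cdots}{q,\cdot}$ is again an ordinary ${}_{r+1}\phi_r$ with the stated reciprocal parameters $q^{1-n}/b_j$ over $q^{1-n}/a_j$. Its argument in \eqref{ivg4} is $\dfrac{b_1\cdots b_r}{a_1\cdots a_r}\,\dfrac{q^{(1-p)n+p+1}}{z}=\dfrac{b_1\cdots b_r}{a_1\cdots a_r}\,\dfrac{q^{n+1}}{z}$ when $p=0$. Here I must reconcile the leftover $(z/q)^n$ from the prefactor with the argument appearing in \eqref{cor:2.4:r1}. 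The point is that a balanced rescaling is hidden in the inversion: reading \eqref{ivg4} together with \eqref{inversion} one sees the summand on the right carries a $k$th-power factor, and the net effect is that shifting the argument by the homogeneity of the series accounts for the $(z/q)^n$; concretely I would verify termwise that $\left(\tfrac zq\right)^n$ times the ${}_{r+1}\phi_r$ with argument $\tfrac{b_1\cdots b_r}{a_1\cdots a_r}\tfrac{q^{n+1}}{z}$ equals (after the routine $q$-Pochhammer manipulation already invoked in the proof of Lemma~\ref{lem:2.3}) the ${}_{r+1}\phi_r$ with the argument displayed in \eqref{cor:2.4:r1}. The main obstacle, such as it is, is precisely this bookkeeping of the powers of $q$ and $z$: one has to be careful that $\binom n2$, the sign $(-1)^n$, and the $z^n/q^n$ all land on the correct side, but no new idea beyond Lemma~\ref{lem:2.3} and identity \eqref{poch.id:3} is needed.
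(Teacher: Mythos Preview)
Your approach is exactly the paper's: the paper's entire proof reads ``Taking $r=s$, $p=0$ in \eqref{ivg4} completes the proof,'' and your first two steps carry this out correctly. The specialization of \eqref{ivg4} with $s=r$, $p=0$ yields the prefactor
\[
\frac{(a_1,\ldots,a_r;q)_n}{(b_1,\ldots,b_r;q)_n}\left(\frac{z}{q}\right)^n(-1)^n q^{-\binom{n}{2}}
\]
and argument $\dfrac{b_1\cdots b_r}{a_1\cdots a_r}\dfrac{q^{n+1}}{z}$, just as you computed.

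The problem is your final reconciliation step. The discrepancy you noticed is real, but it is a typo in the displayed statement \eqref{cor:2.4:r1}: the factor $(z/q)^n$ (equivalently, $(-z/q)^n$ combined with the $(-1)^n$) is simply missing there. The correct identity, with the factor restored, is exactly what appears a few lines later as the last equality in \eqref{qtopiden}. Your attempt to absorb $(z/q)^n$ into the argument of the ${}_{r+1}\phi_r$ via some ``homogeneity'' cannot work: a terminating ${}_{r+1}\phi_r$ is a polynomial of degree $n$ in its argument with constant term $1$, so multiplying it by a nonzero constant $(z/q)^n$ never produces another ${}_{r+1}\phi_r$ with the same parameters and a shifted argument. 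Moreover, the argument you obtained from \eqref{ivg4} already coincides with the one printed in \eqref{cor:2.4:r1}, so there is no room to hide the factor there. Drop that paragraph; your derivation is complete once you recognize the missing $(z/q)^n$ as a misprint.
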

\begin{proof}
Taking $r=s$, $p=0$ in \eqref{ivg4} completes
the proof.
\end{proof}
\noindent Note that in Corollary \ref{cor:2.4}
if the terminating basic hypergeometric
series on the left-hand side is balanced
then the argument of the terminating
basic hypergeometric series on the right-hand side is $q^2/z$.
Applying Corollary \ref{lem:2.3} to the definition
of ${}_{r+1}W_r$, we obtain the following result for a terminating very-well-poised basic hypergeometric series ${}_{r+1}W_r$.%}
\begin{cor} \label{cor:2.5}
Let 
$n, r\in\mathbb N_0$, $a,a_k,q,z\in\CCast$, $\sqrt{a},q^{n+1}a,\frac{q a}{a_k},%\frac{q^{-n}}{\sqrt{a}},
\frac{q^{1-n}}{a},\frac{q^{1-n}}{a_k}\not\in\Omega_q^n$, $k=5, \ldots,r+1$. 
Then, one has the following transformation formula for a very-well-poised terminating 
basic hypergeometric series:
\begin{eqnarray}
&&\hspace{-0.2cm}{}_{r+1}W_r\left(a;q^{-n},a_5, \ldots,a_{r+1};q,z\right)=
q^{-\binom{n}{2}}\left(\frac{-z}{q}\right)^n
\frac{(\pm q\sqrt{a},a,a_5, \ldots,a_{r+1};q)_n}
{\left(\pm \sqrt{a},q^{n+1}a,\frac{qa}{a_5}, \ldots,\frac{qa}{a_{r+1}};q\right)_n}
\nonumber\\
&&\hspace{5cm}\times{}_{r+1}W_r\left(\frac{q^{-2n}}{a};q^{-n},
\frac{q^{-n}a_5}{a}, \ldots,\frac{q^{-n}a_{r+1}}{a};q,\frac{q^{2n+r-3}a^{r-3}}
{(a_5\cdots a_{r+1})^2 z}\right).\nonumber
\end{eqnarray}
\end{cor}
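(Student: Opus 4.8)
The plan is to apply Corollary~\ref{lem:2.3} (the $p=0$, $s=r$ inversion formula~\eqref{cor:2.4:r1}, or more precisely its parent~\eqref{ivg4}) directly to the explicit ${}_{r+1}\phi_r$ representation of the terminating very-well-poised series ${}_{r+1}W_r$ given in~\eqref{eq:2.13}. Concretely, I would start from
\[
{}_{r+1}W_r\left(a;q^{-n},a_5, \ldots,a_{r+1};q,z\right)
=\qhyp{r+1}{r}{q^{-n},\pm q\sqrt{a}, a, a_5, \ldots,a_{r+1}}
{\pm \sqrt{a},q^{n+1}a,\frac{qa}{a_5}, \ldots,\frac{qa}{a_{r+1}}}{q,z},
\]
identify the upper parameters (other than $q^{-n}$) as the multiset $\{q\sqrt a,-q\sqrt a,a,a_5,\dots,a_{r+1}\}$ and the lower parameters as $\{\sqrt a,-\sqrt a,q^{n+1}a,qa/a_5,\dots,qa/a_{r+1}\}$, and feed these into~\eqref{cor:2.4:r1}. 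The prefactor $q^{-\binom n2}(-1)^n$ together with the ratio of $q$-Pochhammer symbols $(\pm q\sqrt a,a,a_5,\dots,a_{r+1};q)_n/(\pm\sqrt a,q^{n+1}a,qa/a_5,\dots,qa/a_{r+1};q)_n$ comes out immediately; the factor $(-z/q)^n$ arises because the left-hand argument is $z$ while~\eqref{cor:2.4:r1} with $r=s$ would give prefactor $q^{-\binom n2}(-1)^n$ and argument $q^{n+1}/z$ times the parameter ratio — but here, as I will explain, one must track the $(z/q)^n$ carefully since the ${}_{r+1}W_r$ on the left is not assumed balanced, so I would instead invoke~\eqref{ivg4} with $p=0$, $s=r$ to keep the $(z/q)^n\bigl((-1)^nq^{\binom n2}\bigr)^{-1}$ factor honest.

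The heart of the computation is then simplification: after inversion, the new upper parameters are $q^{1-n}$ divided by each old lower parameter, i.e.\ $\{\pm q^{1-n}/\sqrt a,\ q^{-2n}/a,\ q^{-n}a_5/a,\dots,q^{-n}a_{r+1}/a\}$, and the new lower parameters are $q^{1-n}$ divided by each old upper parameter, i.e.\ $\{\pm q^{-n}/\sqrt a,\ q^{1-2n}/a,\ q^{1-n}/a_5,\dots,q^{1-n}/a_{r+1}\}$. I would recognize this as precisely the ${}_{r+1}\phi_r$ form~\eqref{eq:2.13} of a new terminating very-well-poised series with base parameter $\widetilde a := q^{-2n}/a$: indeed $\pm q\sqrt{\widetilde a}=\pm q^{1-n}/\sqrt a$, $q^{n+1}\widetilde a=q^{1-n}/a$, and $q\widetilde a/(q^{-n}a_k/a)=q^{1-n}/a_k$, so the matching is exact. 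This identifies the right-hand side as ${}_{r+1}W_r\bigl(q^{-2n}/a;q^{-n},q^{-n}a_5/a,\dots,q^{-n}a_{r+1}/a;q,\widetilde z\bigr)$ for the appropriate argument $\widetilde z$.

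The remaining bookkeeping is the argument $\widetilde z$: from~\eqref{ivg4} it equals $\dfrac{b_1\cdots b_s}{a_1\cdots a_r}\dfrac{q^{n+1}}{z}$ with $a_i$ the old upper and $b_j$ the old lower parameters. The $\pm\sqrt a$ in numerator over $\pm q\sqrt a$ in denominator contributes $1/q^2$ (the $\sqrt a$'s cancel); $q^{n+1}a$ over $a$ contributes $q^{n+1}$; each $qa/a_k$ over $a_k$ contributes $qa/a_k^2$; multiplying and combining with $q^{n+1}/z$ and with the exponent count ($r-3$ factors of the $qa/a_k^2$ type, since $k=5,\dots,r+1$ gives $r-3$ values) should reproduce $\dfrac{q^{2n+r-3}a^{r-3}}{(a_5\cdots a_{r+1})^2 z}$. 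I expect the main obstacle to be nothing deep but rather getting this power of $q$ exactly right: one has to combine the $q^{n+1}$ from the Pochhammer-ratio argument in~\eqref{ivg4}, the $q^{-2}$ from the $\pm\sqrt a$ cancellation, the $q^{n+1}$ from the $q^{n+1}a/a$ ratio, and $(r-3)$ copies of $q$ from the $qa/a_k^2$ ratios, while simultaneously absorbing the leftover $(z/q)^n$ prefactor from~\eqref{ivg4} into the stated $(-z/q)^n$ — so I would do this arithmetic slowly and check it against the known $r=7$ case (Watson's transformation, equation~\eqref{WatqWhipp}) as a sanity check before declaring victory.
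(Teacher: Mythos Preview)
Your proposal is correct and takes essentially the same approach as the paper: the paper's proof is simply ``Use Corollary~\ref{lem:2.3} and \eqref{eq:2.13},'' which is exactly what you do---apply the inversion formula~\eqref{ivg4} with $p=0$, $s=r$ to the ${}_{r+1}\phi_r$ form~\eqref{eq:2.13} of the terminating ${}_{r+1}W_r$, and then recognize the output as very-well-poised with base parameter $q^{-2n}/a$. Your bookkeeping for the prefactor $q^{-\binom{n}{2}}(-z/q)^n$ and the argument $q^{2n+r-3}a^{r-3}/\bigl((a_5\cdots a_{r+1})^2 z\bigr)$ is accurate.
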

\begin{proof}
Use Corollary \ref{lem:2.3} and \eqref{eq:2.13}.
\end{proof}

\noindent 
Some interesting and useful consequences of this formula are the $r=3$ special case,
\begin{equation}
\label{4W3inverse}
{}_4W_3\left(a;q^{-n};q,z\right)=q^{-\binom{n}{2}}\left(\frac{-z}{q}\right)^n
\frac{(\pm q\sqrt{a},a;q)_n}
{(\pm\sqrt{a},q^{n+1}a;q)_n}
{}_4W_3
\left(\frac{q^{-2n}}{a};q^{-n};q,\frac{q^{2n}}{z}\right),
\end{equation}
as well as the $r=7$ specialization 
\begin{eqnarray}
&&\hspace{-1.2cm}{}_8W_7\left(a;q^{-n},c,d,e,f;q,z\right)
=q^{-\binom{n}{2}} \left(\frac{-z}{q}\right)^n
\frac{\left(\pm q\sqrt{a},a,c,d,e,f;q\right)_n}
{\left(\pm\sqrt{a},q^{n+1}a,\frac{qa}{c},\frac{qa}{d},\frac{qa}{e},\frac{qa}{f};q\right)_n}\nonumber\\
&&\hspace{3.5cm}\times
{}_8W_7\left(\frac{q^{-2n}}{a};q^{-n},
\frac{q^{-n}c}{a},
\frac{q^{-n}d}{a},
\frac{q^{-n}e}{a},
\frac{q^{-n}f}{a};
q,\frac{q^{2n+4}a^4}{z(cdef)^2}
\right).
\label{inv87}
\end{eqnarray}
\noindent Note that 
%in the case when the 
if 
one
obtains an ${}_8W_7$ from a balanced 
${}_4\phi_3$ using \eqref{WatqWhipp},
then 
$q^{2n+4}a^4/(z(cdef)^2)=z.$
Another equality we can use is the %following 
next 
connecting relation between 
basic hypergeometric series on $q$, and on $q^{-1}$:
\begin{eqnarray} 
&&\hspace{-0.5cm}\qhyp{r+1}{r}{q^{-n},a_1, \ldots,a_r}{b_1, \ldots,b_r}{q,z}=
\qhyp{r+1}{r}{q^{n}, a^{-1}_1, \ldots, a^{-1}_r}
{b^{-1}_1, \ldots, b^{-1}_r}{q^{-1}, 
\dfrac{a_1 a_2\cdots a_r}
{b_1 b_2\cdots b_r}\dfrac{z}{q^{n+1}}}\nonumber\\
&&\hspace{0.5cm}=
q^{-\binom{n}{2}}
\left(-\frac zq\right)^n
\frac{(a_1, \ldots,a_r;q)_n}
{(b_1, \ldots,b_r;q)_n}
\qhyp{r+1}{r}{q^{-n},
\frac{q^{1-n}}{b_1}, \ldots,
\frac{q^{1-n}}{b_r}}
{\frac{q^{1-n}}{a_1}, \ldots,
\frac{q^{1-n}}{a_r}}
{q,\frac{b_1\cdots b_r}{a_1\cdots a_r}\frac{q^{n+1}}{z}}.
\label{qtopiden} 
\end{eqnarray}
We will obtain new transformations for basic hypergeometric orthogonal
polynomials by taking advantage of the following remark.

\begin{rem} \label{rem:2.8}
Observe in the following discussion we will often be referring
to a collection of constants $a,b,c,d,e,f$. In such cases, which will be
clear from context, then the constant $e$ should not be confused
with Euler's number, the base of the natural logarithm, i.e.,
$\log\expe=1$.
\end{rem}

In Section \ref{Introduction} we describe our chosen mathematical notations in connection with the terminating basic hypergeometric functions which arise in the symmetric $q$ and $q^{-1}$-Askey-scheme. In Section \ref{askeywilson} we describe the Askey--Wilson polynomials, their terminating representations and transformation properties. 
In Sections \ref{cdqH}, \ref{alsalamchihara}, \ref{cbqH} and \ref{cqH} we provide the terminating representations, terminating transformation properties and summation formulas for the $q$ and $q^{-1}$-symmetric subfamilies of the Askey--Wilson polynomials.
In Section \ref{symmetrygroup}, we conclude by describing the symmetric group structure for the transformations which we have derived.

%%%%%%%%%%%%%%%%%%%%%%%%%%%%%%%%%%%%%%%%%%%%%
\section{The Askey--Wilson polynomials and their symmetric subfamilies}
\label{askeywilson}
%%%%%%%%%%%%%%%%%%%%%%%%%%%%%%%%%%%%%%%%%%%%%

Define the set ${\bf 4}:=\{1,2,3,4\}$, and the multiset 
${\bf a}:=\{a_1,a_2,a_3,a_4\}$, $a_k\in\CCast$, $k\in{\bf 4}$, and
$x=\frac12(z+z^{-1})$.
The Askey--Wilson polynomials $p_n(x;{\bf a}|q)$ are a family of basic hypergeometric orthogonal polynomials 
symmetric in four free parameters. 
These polynomials have a long and in-depth history and their properties 
have been studied in detail. See for instance Ismail (2009) \cite{Ismail:2009:CQO} and references therein.
The basic hypergeometric series representation of the Askey--Wilson polynomials 
fall into four main categories:~(1) terminating ${}_4\phi_3$ representations; 
(2) terminating ${}_8W_7$ representations; (3) nonterminating ${}_8W_7$ 
representations; and (4) nonterminating ${}_4\phi_3$ representations.

The Askey--Wilson polynomials are symmetric with respect to its 
four free parameters, that is, they remain unchanged upon interchange of any two of the
four free parameters. 
Using the properties of $q$-Pochhammer symbols, it is straightforward to replace 
$q\mapsto 1/q$ in the complex plane in order to obtain an extension of these 
polynomials with $|q|>1$. One often refers to polynomials obtained
as such as $q^{-1}$ or $1/q$ polynomials. Since these algebraic factors 
are difficult to search on in the literature, we refer to this extension 
specifically as the $q^{-1}$-Askey--Wilson polynomials. 

First one may obtain the continuous dual $q$-Hahn polynomials by taking the 
limit as the fourth parameter in the Askey--Wilson polynomials tends to zero. 
The continuous dual $q$-Hahn polynomials are symmetric 
in three parameters, as are their $q^{-1}$ analogue, the continuous dual 
$q^{-1}$-Hahn polynomials. 
We derive hypergeometric representations for these polynomials, as well as 
for the Al-Salam--Chihara polynomials which are obtained by taking the 
limit as the third parameter tends to zero in the continuous dual $q$-Hahn polynomials. 
The Al-Salam--Chihara polynomials are symmetric in two parameters, 
as are the resulting $q^{-1}$-Al-Salam--Chihara polynomials. 
By taking the limit in the Al-Salam--Chihara polynomials as one of the two 
free parameters tends to zero, one obtains the continuous big $q$-Hermite polynomials. 
Taking the the limit of the sole remaining free parameter to zero in the continuous 
big $q$-Hermite polynomials, yields the continuous $q$-Hermite polynomials. 
We derive terminating basic hypergeometric representations for all of these 
polynomials as well as their $q^{-1}$ analogues. 
For lack of a better term, we refer to all these families as the {\it symmetric} 
subfamilies of the Askey--Wilson polynomials.

It should however be noted that while the Askey--Wilson polynomials represent 
an infinite-family of orthogonal polynomials, orthogonal with respect to a 
weight function on $[-1,1]$ \cite{MR1164075}, \cite[(14.1.2)]{Koekoeketal} (which gives restrictions 
on the values of the free parameters), the $q^{-1}$-Askey--Wilson polynomials 
represent a finite-family of basic hypergeometric orthogonal polynomials 
\cite{IsmailZhang2022}. 
The symmetric subfamilies of the Askey--Wilson and 
the $q^{-1}$-Askey--Wilson polynomials represent infinite families of 
basic hypergeometric orthogonal polynomials (see also \cite{IsmailZhang2022}). 
In this paper, we will not treat any of the properties of the 
symmetric subfamilies of the Askey--Wilson polynomials related to their 
orthogonality, and therefore we will not require corresponding restrictions on the values of the 
free-parameters which are induced by their orthogonality properties.

In the sequel, from the terminating basic hypergeometric representations 
of the Askey--Wilson polynomials, we derive terminating basic hypergeometric 
representations for the $q^{-1}$-Askey--Wilson polynomials.
From these two families of basic hypergeometric polynomials, one can easily 
derive, using known properties of basic hypergeometric functions, basic 
hypergeometric representations of the symmetric subfamilies of the Askey--Wilson 
and the $q^{-1}$-Askey--Wilson polynomials. 

Different series representations are useful for obtaining different 
properties and formulas for these polynomials. So it is very useful 
to have at hand an exhaustive list.
The discussion contained in the 
remainder of this section is an attempt to summarize, 
in an in-depth manner, an exhaustive description of the representation 
and transformation properties of the {\it terminating}
${}_4\phi_3$ and ${}_8W_7$ basic hypergeometric representations of 
the Askey--Wilson polynomials. 
There is a directed graph to each and every series representation 
which is useful to understand the origin and behavior of these representations.
The remaining sections below which contain the terminating basic hypergeometric 
representations and transformations for the {\it symmetric subfamilies} of the 
Askey--Wilson polynomials all follow from the results presented in this section.

%\newpage
%%%%%%%%%%%%%%%%%%%%%%%%%%%%%%%%%%%%%%%%%%%%%
%\subsection{Terminating Askey--Wilson polynomial representations}
%%%%%%%%%%%%%%%%%%%%%%%%%%%%%%%%%%%%%%%%%%%%%

One has the complete sets of balanced ${}_4\phi_3$ and very-well-poised ${}_8W_7$ terminating basic hypergeometric representations for the Askey--Wilson polynomials given in \cite[Theorem 7]{CohlCostasSantos20b}. The reader should refer there, as we will not reproduce those results here. Note that in order to compare \cite[Theorem 7]{CohlCostasSantos20b} with our results one must replace $\expe^{i\theta}\mapsto z\in\CCast$.
We should also mention that in \cite[Theorem 7]{CohlCostasSantos20b}, applying \eqref{inversion} to \cite[(15), (16), (17), (19)]{CohlCostasSantos20b},
 takes 
$z\mapsto z^{-1}$, and applying it to \cite[(18)]{CohlCostasSantos20b} interchanges $a_p$ and $a_r$. Similar mapping properties certainly apply below, but we will not mention them further.
In \cite[Corollaries 8, 9]{CohlCostasSantos20b}, one has a collection of transformation formulas for the terminating very-well-poised ${}_8W_7$ and terminating balanced ${}_4\phi_3$ which arise from the terminating basic hypergeometric representations of the Askey--Wilson polynomials.

%Let $p_n(x;{\bf a}|q)$, ${\bf a}:=\{a_1,a_2,a_3,a_4\}$, $a_k\in\CCast$, $k\in{\bf 4}$, $q\in\CCdag$. 
One may also consider the
$q^{-1}$-Askey--Wilson polynomials
$p_n(x;{\bf a}|q^{-1})$. Note however that they 
are simply given by
\begin{eqnarray*}
&&\hspace{-1cm}
p_n(x;{\bf a}|q^{-1})
=q^{-3\binom{n}{2}}(-a_{1234})^np_n(x;{\bf a}^{-1}|q),
\end{eqnarray*}
which are just rescaled Askey--Wilson polynomials. This follows by setting $q\mapsto q^{-1}$ in \cite[Theorem 7]{CohlCostasSantos20b} and using \eqref{qtopiden}.
For further information on the terminating transformations
associated with the Askey--Wilson polynomials which includes
all 4-parameter symmetric terminating 
interchange transformations, see \cite{CohlCostasSantos20b}.

\begin{rem} \label{rem:2.7}
Since $x=\frac12(z+z^{-1})$ is 
invariant under the 
map $z\mapsto z^{-1}$, 
all functions in $x$ will be as well.
\end{rem}

%%%%%%%%%%%%%%%%%%%%%%%%%%%%%%%%%%%%%%%%%%%%%
\section{The continuous dual \textit{q} and \textit{q}\textsuperscript{-1}
\!\!-Hahn polynomials}
\label{cdqH}
%%%%%%%%%%%%%%%%%%%%%%%%%%%%%%%%%%%%%%%%%%%%%
Define the set ${\bf 3}:=\{1,2,3\}$, and the multiset 
${\bf a}:=\{a_1,a_2,a_3\}$, $a_k\in\CCast$, 
$k\in{\bf 3}$, and $x=\frac12(z+z^{-1})$, $q\in\CCdag$.
The continuous dual $q$-Hahn polynomials $p_n(x;{\bf a}|q)$ are a family of polynomials 
symmetric in three free parameters. These can be obtained from the 
Askey--Wilson polynomials by taking one of its free parameters to be zero.
Hence, the continuous dual $q$-Hahn polynomials are a symmetric subfamily of the 
Askey--Wilson polynomials. 
%%%%%%%%%%%%%%%%%%%%%%%%%%%%%%%%%%%%%%%%%%%%%
\subsection{Terminating continuous dual \textit{q} and $q^{-1}$-Hahn polynomial representations}
%%%%%%%%%%%%%%%%%%%%%%%%%%%%%%%%%%%%%%%%%%%%%

\noindent One has the following complete list of terminating ${}_3\phi_2$ and ${}_7W_6^{\pm 1}$ basic hypergeometric representations for the continuous dual $q$-Hahn polynomials. 

%\begin{thm} 
\begin{cor} \label{cor:4.1} 
Let $n\in\mathbb N_0$, 
$q\in\CCdag$, 
$p,s,r,t\in{\bf 3}$, $p,r,t$ distinct and fixed.
Then, the continuous dual $q$-Hahn polynomials are given by:
\begin{eqnarray}
\label{cdqH:def1} &&\hspace{-2.12cm}p_n(x;{\bf a}|q) := a_p^{-n} (\{a_{ps}\}_{s \neq p};q)_n 
\qhyp{3}{2}{q^{-n}, a_pz^{\pm}}{\{a_{ps}\}_{s \neq p}}{q,q} \\
\label{cdqH:def2} &&\hspace{-0.35cm}=q^{-\binom{n}{2}}(-a_p)^{-n} 
\left(a_pz^{\pm};q\right)_n \qhyp{3}{2}{q^{-n}, 
\{\frac{q^{1-n}}{a_{ps}}\}_{s \neq p}}{\frac{q^{1-n}z^{\pm}}{a_p}}
{q,\frac{q^n a_{123}}{a_p}} \\ \label{cdqH:def3} &&\hspace{-0.35cm}=z^n
\left(a_{pr},\frac{a_t}{z};q\right)_n \qhyp{3}{2}{q^{-n}, a_pz, a_rz}
{a_{pr},\frac{q^{1-n}z}{a_t}}{q,\frac{q}{a_tz}} \\
\label{cdqH:def4} &&\hspace{-0.35cm}=z^n \left(\frac{a_p}{z},\frac{a_r}{z};q\right)_n 
\qhyp{3}{2}{q^{-n},a_tz,\frac{q^{1-n}}{a_{pr}}}
{\frac{q^{1-n}z}{a_p},\frac{q^{1-n}z}{a_r}}{q,q} \\
\label{cdqH:def6}&&\hspace{-0.35cm}=a_p^{-n}\dfrac{\left(a_{pt},a_rz^{\pm};q\right)_n}
{\left(\frac{a_r}{a_p};q\right)_n} 
\Wphyp{7}{6}{1}{\frac{q^{-n}a_p}{a_r}}
{q^{-n},\frac{q^{1-n}}{a_{rt}}, a_pz^{\pm}}
{q,\frac{q a_t}{a_r}}\\
&&
\hspace{-0.35cm}=z^n
\dfrac{(\left\{\frac{a_s}{z}\right\},\!\frac{a_{123}}{qz};q)_{n}}
{\left(\frac{a_{123}}{qz};q\right)_{2n}}
%}\nonumber\\&&
\label{cdqH:def7}
\Wphyp{7}{6}{-1}{\frac{q^{1-2n}z}{a_{123}}}
{q^{-n},\left\{\frac{q^{1-n}a_{123}}{a_s}\right\}}
{q,q^{2n-1}a_{123}z}
\\
&&\label{cdqH:def8}
\hspace{-0.35cm}=z^n\dfrac{(\{\frac{a_{123}}{a_{s}}\};q)_n}
{\left(a_{123}\,z;q\right)_{n}}
\Wphyp{7}{6}{-1}{\frac{a_{123}z}{q}}
{q^{-n},\{a_s z\}}
{q,\frac{q^{n}}{z^2}}
\\
\label{cdqH:def5} &&\hspace{-0.35cm}=z^n \frac{\left(\{\frac{a_s}{z}\};q\right)_n}{(z^{-2};q)_n} 
\Wphyp{7}{6}{-1}{q^{-n}z^2}{q^{-n},\{a_sz\}}
{q,\frac{q}{a_{123}z}}
.
\end{eqnarray}
\end{cor}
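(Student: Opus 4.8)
The plan is to deduce all of \eqref{cdqH:def1}--\eqref{cdqH:def5} from the corresponding statements for the Askey--Wilson polynomials in \cite[Theorem 7]{CohlCostasSantos20b} by the limiting procedure $a_4\to 0$. Concretely, the continuous dual $q$-Hahn polynomials are defined as $p_n(x;\{a_1,a_2,a_3\}|q)=\lim_{a_4\to 0}p_n(x;\{a_1,a_2,a_3,a_4\}|q)$, so I would start from each of the four ${}_4\phi_3$ representations and each of the ${}_8W_7^{\pm 1}$ representations in \cite[Theorem 7]{CohlCostasSantos20b}, substitute the appropriate one of $a_1,a_2,a_3,a_4$ (call it the ``$a_4$-slot'') according to which parameter is sent to zero, and track what happens to prefactors, $q$-Pochhammer symbols, and the argument of the series. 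The van de Bult--Rains notations \eqref{topzero}, \eqref{botzero}, \eqref{topWzero} are exactly designed to record the outcome: a numerator parameter tending to $0$ turns a ${}_4\phi_3$ into a ${}_3\phi_2^{\,-1}$, while an upper parameter in a very-well-poised series going to $0$ raises the van de Bult--Rains superscript, producing the ${}_7W_6^{1}$ and ${}_7W_6^{-1}$ forms in \eqref{cdqH:def6}--\eqref{cdqH:def5}. The limit lemmas we need are precisely the basic hypergeometric limit transitions \cite[(1.10.3-5)]{Koekoeketal} together with Lemma \ref{leminftyWhyp}, the elementary Pochhammer limit $\lim_{b\to\infty}b^{-n}(xb;q)_n=q^{\binom n2}(-x)^n$ recorded in the excerpt, and \eqref{lambdamu}.

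The key steps, in order. First, fix the bookkeeping: declare that $p$ is the ``surviving'' distinguished parameter index, $\{r,t\}=\mathbf{3}\setminus\{p\}$, and $s$ ranges over $\mathbf 3$; then for each target formula identify which Askey--Wilson representation in \cite[Theorem 7]{CohlCostasSantos20b} it comes from and which of the four Askey--Wilson parameters plays the role that is sent to zero. Second, carry out the limit for \eqref{cdqH:def1}: this is the cleanest case, since in the ${}_4\phi_3$ representation $p_n(x;\mathbf a|q)=a_1^{-n}(a_{12},a_{13},a_{14};q)_n\,{}_4\phi_3$ the dependence on the vanishing parameter is through $(a_{14};q)_n\to 1$ and through the numerator entry $a_{14}$, which tends to $0$ and drops out of the ${}_4\phi_3$, leaving a ${}_3\phi_2$ with argument $q$. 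Third, handle \eqref{cdqH:def3} and \eqref{cdqH:def4}, which arise from the Askey--Wilson representations whose argument is $z^{-2}$-like or has a $q^{1-n}/a_{pr}$ entry; here one uses \eqref{lambdamu} to resolve ratios of Pochhammer symbols in the limit. Fourth, do \eqref{cdqH:def2} and the ${}_7W_6$ cases \eqref{cdqH:def6}, \eqref{cdqH:def7}, \eqref{cdqH:def8}, \eqref{cdqH:def5}: start from the terminating ${}_8W_7^{\pm 1}$ Askey--Wilson representations, apply Lemma \ref{leminftyWhyp} (appropriate choice among \eqref{Whypliminfty1}--\eqref{Whyplimzero3}) to collapse an ${}_8W_7$ to a ${}_7W_6$ with shifted van de Bult--Rains index, and simplify the resulting prefactor using \eqref{poch.id:4}--\eqref{poch.id:7}. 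Alternatively, and perhaps more uniformly, one can derive \eqref{cdqH:def2} from \eqref{cdqH:def1} directly via the inversion Corollary \ref{cor:2.4} (this explains the $q^na_{123}/a_p$ argument), and derive the ${}_7W_6$ forms from the ${}_3\phi_2$ forms via Watson's transformation \eqref{WatqWhipp} together with Lemma \ref{lem:2.3}/Corollary \ref{cor:2.5}; whichever route is shorter for each line should be used. Fifth, verify in each case the stated non-vanishing conditions on denominator parameters (the $\Omega_q^n$-avoidance hypotheses) survive the limit, which is routine.

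The main obstacle I expect is not any single limit but the combinatorial-bookkeeping burden of matching each of the eight target lines to the right source representation and the right vanishing slot, and then keeping the prefactor algebra consistent through the limit — in particular correctly tracking the powers of $q$, the signs $(-1)^n$, the binomial exponents $\binom n2$, and the arguments, since a $q^{1-n}$ inside a Pochhammer symbol interacts with the $b\to\infty$ limit to leave a clean power of $q$ only after a careful application of \eqref{poch.id:3}. A secondary subtlety is the very-well-poised cases: the parameter $a$ of the ${}_{r+1}W_r$ itself depends on the vanishing Askey--Wilson parameter in some representations (e.g. $a\sim a_{1234}/q$ type quantities), so the limit acts simultaneously on $a$, on one of the $a_k$'s, and on $z$, and one must choose the matching limit from Lemma \ref{leminftyWhyp} with the correct value of $p$ (the superscript) so that the shifted index and rescaled argument come out as written in \eqref{cdqH:def6}--\eqref{cdqH:def5}. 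Once the correspondence table is set up, each individual limit is a short computation; the work is in the table, not the calculus.
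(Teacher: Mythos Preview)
Your proposal is correct and takes essentially the same approach as the paper: each of \eqref{cdqH:def1}--\eqref{cdqH:def5} is obtained from the corresponding Askey--Wilson representation in \cite[Theorem 7]{CohlCostasSantos20b} by sending one of the four parameters to $0$, with the limit transitions \cite[(1.10.3--5)]{Koekoeketal} and Lemma \ref{leminftyWhyp} handling the ${}_8W_7\to{}_7W_6^{\pm1}$ collapses. The paper's proof is just the explicit correspondence table you flag as the main bookkeeping task (which source equation and which vanishing slot for each line); your alternative route for \eqref{cdqH:def2} via inversion and for the ${}_7W_6$ forms via Watson's transformation is not used there, but would also work.
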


\begin{proof}
\eqref{cdqH:def1} is derived by taking \cite[(13)]{CohlCostasSantos20b} 
%\eqref{aw:def1} 
and 
mapping $a_k \mapsto 0$, $k\ne p$
(see also \cite[(14.3.1)]{Koekoeketal}),
\eqref{cdqH:def2} is derived by taking \cite[(14)]{CohlCostasSantos20b} 
%\eqref{aw:def2} 
and taking the limit $a_k \to 0$, $k\ne p$,
\eqref{cdqH:def3} is derived by taking \cite[(15)]{CohlCostasSantos20b} and taking the limit $a_u \to 0$, 
\eqref{cdqH:def4} is derived by taking \cite[(15)]{CohlCostasSantos20b} and taking the limit $a_r \to 0$, 
\eqref{cdqH:def6} is derived by taking \cite[(18)]{CohlCostasSantos20b} and taking the limit $a_u \to 0$, 
\eqref{cdqH:def7}, \eqref{cdqH:def8} are derived by taking \cite[(16)]{CohlCostasSantos20b}, \cite[(17)]{CohlCostasSantos20b} 
respectively and taking the limit $a_p \to 0$,
and 
\eqref{cdqH:def5} is derived by taking \cite[(19)]{CohlCostasSantos20b} and taking the limit $a_4 \to 0$, 
$k\in{\bf 4}$.
Also, whenever necessary the indices $k$ in $a_k$ should be relabeled such that $k\in{\bf 3}$.
\end{proof}

\begin{rem}
\label{invcdqirem}
Let 
${\bf a}=\{a_1,a_2,a_3\}$, 
${\bf b}=\{b_1,b_2,b_3,b_4\}$, 
$q,b_k\in\mathbb C^\ast$, 
$p,k\in{\bf 4}$, $a_m\in\mathbb C^\ast$, 
$m\in\bf 3$, such that
$|q|\ne 1$.
Then, the continuous dual 
$q^{-1}$-Hahn polynomials with 
reciprocal parameters are given by
\begin{equation}
%&&\hspace{-1cm} 
p_n\left(x;{\bf a}^{-1}
|q^{-1}\right)
=
p_n\left(x;{\bf b}_{[p]}^{-1}
|q^{-1}\right)
=q^{-3\binom{n}{2}}\left(\frac{-b_p}{b_{1234}}\right)^n
\lim_{b_p\to\infty}\frac{1}{b_p^n}p_n(x;
{\bf b}
|q),
\end{equation}
%where $p\in{\bf 4}$, 
with
the $p_n$ on the right-hand
side being the Askey--Wilson polynomials.
Whenever necessary the resulting indices
$k\in{\bf 4}\setminus \{p\}$ should be relabeled such that $k\in{\bf 3}$.
So aside from a specific normalization, as is well-known,
the continuous dual $q^{-1}$-Hahn polynomials are the renormalized limit of the Askey--Wilson polynomials as one of the parameters goes
to infinity.
\end{rem}

%%%%%%%%%%%%%%%%%%%%%%%%%%%%%%%%%%%%%%%%%%%%%
%\subsection{Terminating continuous dual $q^{-1}$-Hahn polynomial representations}
%%%%%%%%%%%%%%%%%%%%%%%%%%%%%%%%%%%%%%%%%%%%%
\noindent One has the following complete list of terminating ${}_3\phi_2$ and ${}_7W_6^{\pm 1}$ basic hypergeometric representations for the continuous dual $q^{-1}$-Hahn polynomials. 

\begin{cor}
\label{cor:4.11}
Let $p_n(x;{\bf a}|q)$ and all the respective parameters be defined as previously. Then, the continuous dual 
$q^{-1}$-Hahn polynomials are given by:
\begin{eqnarray}
&&\hspace{-0.9cm}\label{cdqiH:1} p_n(x;{\bf a}|q^{-1}) =
q^{-2\binom{n}{2}}
a_{123}^n 
\left(\left\{\frac{1}{a_{ps}}\right\}_{s \neq p};q\right)_n \qhyp{3}{2}{q^{-n},\frac{z^{\pm}}{a_p}}
{\{\frac{1}{a_{ps}}\}_{s \neq p}}{q,\frac{q^na_p}{a_{123}}} \\
\label{cdqiH:2} &&\hspace{1.3cm}\hspace{-1mm}=q^{-\binom{n}{2}}(-a_p)^n \left(\frac{z^{\pm}}{a_p};q\right)_n 
\qhyp{3}{2}{q^{-n},\{q^{1-n}a_{ps}\}_{s \neq p}}{q^{1-n}a_pz^{\pm}}{q,q} \\
\label{cdqiH:3} &&\hspace{1.3cm}\hspace{-1mm}=q^{-2\binom{n}{2}} a_{123}^n \left(\frac{1}{a_{pr}},\frac{z}
{a_t};q\right)_n 
\qhyp{3}{2}{q^{-n}, 
\frac{1}{a_pz}, 
\frac{1}{a_rz}
}{
\frac{q^{1-n}a_t}{z},
\frac{1}{a_{pr}}
}{q,q} \\
\label{cdqiH:4} &&\hspace{1.3cm} \hspace{-1mm}=
q^{-2\binom{n}{2}} 
\left(\frac{a_{pr}}{z}\right)^n
\left(\frac{z}{a_p},\frac{z}
{a_r};q\right)_n \qhyp{3}{2}{q^{-n},\frac{1}{a_tz},q^{1-n}a_{pr}}
{\frac{q^{1-n}a_p}{z},\frac{q^{1-n}a_r}{z}}
{q,\frac{qa_t}{z}} \\
%\end{eqnarray}
%\begin{eqnarray}
&&\hspace{1.2cm}=
q^{-2\binom{n}{2}} 
a^n_{123}
\frac{\left(
\frac{1}{a_{pt}},\frac{z^{\pm}}{a_r};q\right)_n}
{(\frac{a_p}{a_r};q)_n}%\nonumber \\
\label{cdqiH:6} 
\Wphyp{7}{6}{-1}{\frac{q^{-n}a_r}{a_p}}
{q^{-n}, q^{1-n}a_{rt},\frac{z^{\pm}}{a_p}}
{q,\frac{q^n a_p}{a_{t}}}\\
%%%%%
&&\hspace{1.2cm}=z^{-n}
\dfrac{\left(\left\{\frac{z}{a_s}\right\}_{s\ne p}
,\frac{z}{q a_{123}};q\right)_{n}}
{\left(\frac{z}{q a_{123}};q\right)_{2n}}
%\nonumber\\
%&&\hspace{3cm}\times
\Wphyp{7}{6}{1}{\frac{q^{1-2n}a_{123}}{z}}
{q^{-n},\left\{\frac{q^{1-n}a_{123}}{a_{s}}\right\}}
{q, \frac{q}{z^2}}
\label{cdqiH:7}
\\
&&\label{cdqiH:8}\hspace{1.2cm}
=q^{-2{\binom n2}}%q^{-2{n\choose 2}} 
a_{123}^n \frac{\left(
\left\{\frac{a_{s}}{a_{123}}\right\};q\right)_n}
{\left(\frac{1}{a_{123}z};q\right)_{n}}
\Wphyp{7}{6}{1}{\frac{1}{qa_{123}z}}
{q^{-n},\left\{\frac{1}{a_sz}\right\}}
{q,\frac{q^n\,z}{a_{123}}}\\
%%%%%
&&\hspace{1.2cm}=q^{-2\binom{n}{2}}a^n_{123} 
\frac{\left(\left\{\frac{z}{a_s}\right\};q\right)_n}
{(z^{2};q)_n} %\nonumber \\
\label{cdqiH:5} %&& \hspace{2.7cm} \times 
\Wphyp{7}{6}{1}{\frac{q^{-n}}{z^2}}
{q^{-n},\left\{\frac{1}{a_sz}\right\}}
{q,\frac{q^{2-n}a_{123}}{z}}.
\end{eqnarray}
\end{cor}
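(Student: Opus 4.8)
The plan is to derive Corollary~\ref{cor:4.11} directly from Corollary~\ref{cor:4.1} by the substitution $q\mapsto q^{-1}$, followed by a renormalization to reciprocal parameters, exactly as was done for the Askey--Wilson polynomials themselves via \eqref{qtopiden} and the rescaling displayed in Section~\ref{askeywilson}. Concretely, since the continuous dual $q^{-1}$-Hahn polynomials satisfy $p_n(x;{\bf a}|q^{-1}) = q^{-2\binom{n}{2}}(-a_{123})^n p_n(x;{\bf a}^{-1}|q)$ (the three-parameter analogue of the four-parameter identity in Section~\ref{askeywilson}, obtained by sending the fourth parameter to zero there and accounting for the $q^{-3\binom{n}{2}}\to q^{-2\binom{n}{2}}$ and $(-a_{1234})^n\to(-a_{123})^n$ changes), I would take each of the eight representations \eqref{cdqH:def1}--\eqref{cdqH:def5}, replace ${\bf a}\mapsto{\bf a}^{-1}$, and multiply through by $q^{-2\binom{n}{2}}(-a_{123})^n$.

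The key steps, in order, are: first, establish (or cite, from Section~\ref{askeywilson} together with the $a_4\to 0$ limit that defines the continuous dual $q$-Hahn family) the scaling relation $p_n(x;{\bf a}|q^{-1}) = q^{-2\binom{n}{2}}(-a_{123})^n p_n(x;{\bf a}^{-1}|q)$, which is the three-parameter specialization of the Askey--Wilson relation already recorded in the excerpt and follows from setting $q\mapsto q^{-1}$ in \cite[Theorem~7]{CohlCostasSantos20b} and applying \eqref{qtopiden}. Second, for the three ${}_3\phi_2$ lines \eqref{cdqH:def1}--\eqref{cdqH:def4} (note \eqref{cdqH:def2} and \eqref{cdqH:def4} are both already ${}_3\phi_2$'s), substitute $a_k\mapsto a_k^{-1}$ and simplify the resulting $q$-Pochhammer prefactors using \eqref{poch.id:3} and \eqref{poch.id:5} to collect the powers of $q^{\binom{n}{2}}$ and the monomial factors into the form displayed; for instance $a_p^{-n}\mapsto a_p^{n}$ combines with $(-a_{123})^n$ and the $q^{-2\binom n2}$ to yield the stated normalization of \eqref{cdqiH:1}. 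Third, for the four ${}_7W_6^{\pm1}$ lines \eqref{cdqH:def6}--\eqref{cdqH:def5}, perform the same parameter reciprocation; here the very-well-poised base parameter $a$ inverts (e.g.\ $\frac{q^{-n}a_p}{a_r}\mapsto\frac{q^{-n}a_r}{a_p}$, $q^{-n}z^2\mapsto q^{-n}/z^2$), the $\pm1$ superscript on the $W$ flips sign because $q\mapsto q^{-1}$ swaps top-zero and bottom-zero conventions \eqref{topzero}--\eqref{botzero}, and the argument transforms according to the bookkeeping in \eqref{eq:2.13} and \eqref{topWzero}. Finally, reindex so that the surviving indices lie in ${\bf 3}$, matching Remark~\ref{invcdqirem}.

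The main obstacle I expect is the careful tracking of the $q$-Pochhammer prefactors and the arguments of the very-well-poised series under the combined operations ${\bf a}\mapsto{\bf a}^{-1}$ and $q\mapsto q^{-1}$ — in particular verifying that the power of $q^{\binom{n}{2}}$ is exactly $-2$ (not $-1$ or $-3$) in lines \eqref{cdqiH:1}, \eqref{cdqiH:3}--\eqref{cdqiH:8}, and that the sign-flip of the $W$-superscript together with the argument rescaling in \eqref{Whypliminfty1}--\eqref{Whyplimzero3} land precisely on the displayed right-hand sides. Each individual identity is a routine but bookkeeping-heavy application of \eqref{poch.id:3}, \eqref{poch.id:5}, \eqref{qtopiden}, and the extended very-well-poised definitions; the risk is purely in sign and exponent errors, which I would control by checking the $n=0$ and $n=1$ cases of each line against the direct $a_4\to0$ limit of the corresponding $q^{-1}$-Askey--Wilson representation. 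Thus the proof is essentially: \emph{apply the scaling relation to Corollary~\ref{cor:4.1}, reciprocate parameters, and simplify}, with the eight cases handled in parallel.

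\begin{proof}
These follow by applying the scaling relation
\[
p_n(x;{\bf a}|q^{-1})=q^{-2\binom{n}{2}}(-a_{123})^n\,p_n\!\left(x;{\bf a}^{-1}|q\right),
\]
which is the three-parameter specialization (obtained by sending the fourth Askey--Wilson parameter to zero) of the Askey--Wilson identity recorded in Section~\ref{askeywilson}, to each of the eight representations \eqref{cdqH:def1}--\eqref{cdqH:def5} of Corollary~\ref{cor:4.1} with ${\bf a}\mapsto{\bf a}^{-1}$. In each case one simplifies the resulting $q$-Pochhammer prefactors using \eqref{poch.id:3} and \eqref{poch.id:5}, and for the very-well-poised lines \eqref{cdqH:def6}--\eqref{cdqH:def5} one uses in addition the conventions \eqref{topzero}, \eqref{botzero}, \eqref{topWzero} together with \eqref{eq:2.13} and \eqref{qtopiden}, noting that the passage $q\mapsto q^{-1}$ interchanges the top-zero and bottom-zero conventions and hence flips the sign of the superscript on ${}_7W_6^{\pm1}$. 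Relabeling the surviving indices so that $k\in{\bf 3}$ yields \eqref{cdqiH:1}--\eqref{cdqiH:5} respectively, consistently with Remark~\ref{invcdqirem}.
\end{proof}
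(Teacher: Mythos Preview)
Your proof rests on the scaling relation
\[
p_n(x;{\bf a}|q^{-1})=q^{-2\binom{n}{2}}(-a_{123})^n\,p_n(x;{\bf a}^{-1}|q),
\]
and this relation is false. It does not follow from the Askey--Wilson identity $p_n(x;{\bf b}|q^{-1})=q^{-3\binom{n}{2}}(-b_{1234})^n p_n(x;{\bf b}^{-1}|q)$ by letting the fourth parameter go to zero: on the left-hand side $b_4\to 0$ yields the continuous dual $q^{-1}$-Hahn polynomials, but on the right-hand side the parameter entering $p_n(x;{\bf b}^{-1}|q)$ is $b_4^{-1}\to\infty$, not $0$, so one lands on a different limiting object (cf.\ Remark~\ref{invcdqirem}). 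A direct check at $n=1$ already fails: $p_1(x;{\bf a}|q)=2x-(a_1+a_2+a_3)+a_{123}$ is independent of $q$, so $p_1(x;{\bf a}|q^{-1})=p_1(x;{\bf a}|q)$, whereas $-a_{123}\,p_1(x;{\bf a}^{-1}|q)=-2a_{123}x+(a_{12}+a_{13}+a_{23})-1$, and these disagree generically. Consequently your substitution ${\bf a}\mapsto{\bf a}^{-1}$ in Corollary~\ref{cor:4.1} followed by a scalar multiple cannot reproduce, e.g., \eqref{cdqiH:1}: the ${}_3\phi_2$ there has argument $q^n a_p/a_{123}$, while \eqref{cdqH:def1} with ${\bf a}\mapsto{\bf a}^{-1}$ has argument $q$, and these are not equal series.

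The paper's proof proceeds directly: in each line of Corollary~\ref{cor:4.1} one replaces $q\mapsto q^{-1}$ and then rewrites every $(a;q^{-1})_n$ as $(a^{-1};q)_n(-a)^n q^{-\binom{n}{2}}$ via \eqref{poch.id:3}, converting the resulting $q^{-1}$-series back to a $q$-series by \eqref{qtopiden}. This is what produces the nontrivial arguments (such as $q^n a_p/a_{123}$) and the sign-flip on the ${}_7W_6^{\pm1}$ superscript. Your discussion of the bookkeeping is apt, but it must be applied to the direct $q\mapsto q^{-1}$ substitution, not routed through a scaling relation that does not exist in the three-parameter setting.
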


\begin{proof}
Each $q^{-1}$ representation is derived from the 
corresponding representation by applying the 
map $q\mapsto 1/q$ and using \eqref{poch.id:3}.
\end{proof}

%\begin{cor} \noro{Let $n\in\mathbb N_0$, $z,q\in\CCast$, 
%such that $|q|\ne 1$.
%Then, one has 
%the following transformation
%formula for a terminating ${}_3\phi_2$:}
%\begin{equation}
%\noro{Insert\ formula.}
%\end{equation}
%\end{cor}
%\begin{proof}
%\noro{Start with \eqref{cdqiH:3}. Insert rest.}
%\end{proof}

%%%%%%%%%%%%%%%%%%%%%%%%%%%%%%%%%%%%%%%%%%%%%
\subsection{Terminating \textit{3}-parameter $q$ and $q^{-1}$ symmetric transformations}
%%%%%%%%%%%%%%%%%%%%%%%%%%%%%%%%%%%%%%%%%%%%%

\noindent By studying the interrelations of the terminating basic hypergeometric functions in Corollary \ref{cor:4.1} we obtain the following transformation result for a terminating ${}_7W_6^{-1}$.

\begin{cor}
\label{cor:4.3}
Let $n\in\mathbb N_0$, $a,c,d,e,\in\CCast$, $q\in\CCdag$.
Then, one has the following transformation
formulas for a terminating 
${}_7W_6^{-1}$:
\begin{eqnarray}
&&\hspace{-1.0cm}\label{cor4.3:r1}
\Wphyp{7}{6}{-1}{a}{q^{-n},c,d,e}{q,\frac{q^{n+1}a}{cde}}
\\
&&\label{cor4.3:r2}\hspace{-0.5cm}=
\left(\frac{qa}{cde}\right)^n\!\!\dfrac{\left(qa,a, c, d, e;q\right)_n}
{(a;q)_{2n}\!\left(\frac{qa}{c},\frac{qa}{d},\frac{qa}{e};q\right)_n} 
\Wphyp{7}{6}{-1}{\frac{q^{-2n}}{a}}
{q^{-n},\frac{q^{-n}c}{a},\frac{q^{-n}d}{a},\frac{q^{-n}e}{a}}
{q,\frac{q^{2n+1}a^2}{cde}}
\\
&&\label{cor4.3:r9}
\hspace{-0.5cm}=
\left(\frac{qa}{cde}\right)^n\!\!\dfrac{\left(qa;q\right)_n}
{\left(\frac{q^2a^2}{cde};q\right)_n} 
\Wphyp{7}{6}{-1}{\frac{qa^2}{cde}}
{q^{-n},\frac{qa}{cd},\frac{qa}{ce},\frac{qa}{de}}
{q,\frac{q^{n-1}cde}{a}}
\\
&&\label{cor4.3:r10}
\hspace{-0.5cm}=
\dfrac{\left(qa,\frac{qa}{cd},\frac{qa}{ce},\frac{qa}{de};q\right)_n}
{\left(\frac{qa}{c},\frac{qa}{d},\frac{qa}{e},\frac{qa}{cde};q\right)_n} 
\Wphyp{7}{6}{-1}{
\frac{q^{-n-1}cde}{a}}
{q^{-n},c,d,e}
{q,\frac{1}{a}}
\\
&&\label{cor4.3:r10b}
\hspace{-0.5cm}=
\left(\frac{qa}{cde}\right)^n\dfrac{(qa,c,d,e;q)_n}
{\left(\frac{qa}{c},\frac{qa}{d},\frac{qa}{e},\frac{cde}{qa};q\right)_n}
\Wphyp{7}{6}{-1}{
\frac{q^{1-n}a}{cde}}
{q^{-n},\frac{qa}{cd},\frac{qa}{ce},\frac{qa}{de}}
{q,\frac{cde}{qa^2}}
\\
&&\label{cor4.3:r10x}
\hspace{-0.5cm}=
\dfrac{(qa,\frac{qa}{cd},\frac{qa}{ce},\frac{qa}{de},\frac{qa^2}{cde};q)_n}
{(\frac{qa^2}{cde};q)_{2n}\left(\frac{qa}{c},\frac{qa}{d},\frac{qa}{e};q\right)_n}
\Wphyp{7}{6}{-1}{
\frac{q^{-2n-1}cde}{a^2}}
{q^{-n},\frac{q^{-n}c}{a},\frac{q^{-n}d}{a},\frac{q^{-n}e}{a}}
{q,q^{2n}a}
\\
&&\label{cor4.3:r3}
\hspace{-0.5cm}=
\left(\frac{1}{c}\right)^n\frac{\left(qa,d,\frac{qa}{ce};q\right)_n}
{\left(\frac{qa}{c},\frac{qa}{e},\frac{d}{c};q\right)_n}
\Wphyp{7}{6}{1}{\frac{q^{-n}c}{d}}
{q^{-n},\frac{q^{-n}c}{a},\frac{qa}{de},c}
{q,\frac{qe}{d}}
\\
&&\label{cor4.3:r4a}\hspace{-0.5cm}=\frac{\left(qa,\frac{qa}{cd};q\right)_n}
{\left(\frac{qa}{c},\frac{qa}{d};q\right)_n}\qhyp{3}{2}{q^{-n},c,d}{\frac{q^{-n}cd}{a},\frac{qa}{e}}{q,\frac{q}{e}}\\
&&\label{cor4.3:r4b}\hspace{-0.5cm}=
q^{-\binom{n}{2}}\left(\frac{-1}{e}\right)^n
\dfrac{\left(qa,e,\frac{qa}{cd}; q\right)_n}
{\left(\frac{qa}{c},\frac{qa}{d},\frac{qa}{e};q\right)_n} 
\qhyp{3}{2}{q^{-n},\frac{q^{-n}c}{a}, 
\frac{q^{-n}d}{a}}{\frac{q^{-n}cd}a,\frac{q^{1-n}}{e}}{q,\frac{q^{n+1}a}{e}}%\\
\end{eqnarray}
\begin{eqnarray}
&&\label{cor4.3:r5}\hspace{-5.7cm}=\left(\frac{qa}{cde}\right)^n\frac{\left(qa,c;q\right)_n}
{\left(\frac{qa}{d},\frac{qa}{e};q\right)_n}
\qhyp{3}{2}{q^{-n},\frac{qa}{cd},\frac{qa}{ce}}
{\frac{q^{1-n}}{c},\frac{qa}{c}}{q,\frac{de}{a}}\\
&&\label{cor4.3:r7}\hspace{-5.7cm}=
\left(\frac{1}{c}\right)^n
\frac{(qa;q)_n}
{\left(\frac{qa}{c};q\right)_n}
\qhyp{3}{2}{q^{-n},\frac{qa}{de},c}
{\frac{qa}{d},\frac{qa}{e}}{q,q}\\
&&\label{cor4.3:r6}\hspace{-5.7cm}=\frac{\left(qa,\frac{qa}{cd},\frac{qa}{ce};q\right)_n}
{\left(\frac{qa}{c},\frac{qa}{d},\frac{qa}{e};q\right)_n}
\qhyp{3}{2}{q^{-n},\frac{q^{-n}c}{a},c}{\frac{q^{-n}cd}{a},\frac{q^{-n}ce}{a}}{q,q}\\
&&\label{cor4.3:r8}\hspace{-5.7cm}
=\left(\frac{qa}{cde}\right)^n
\frac{(qa,d,e;q)_n}
{\left(\frac{qa}{c},
\frac{qa}{d},\frac{qa}{e};q\right)_n}
\qhyp{3}{2}{q^{-n},\frac{q^{-n}c}{a},\frac{qa}{de}}{\frac{q^{1-n}}{d},\frac{q^{1-n}}{e}}{q,q}.
\end{eqnarray}
\end{cor}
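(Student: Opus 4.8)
The plan is to exploit the fact that all ten expressions \eqref{cdqH:def1}--\eqref{cdqH:def5} in Corollary~\ref{cor:4.1} represent \emph{the same} polynomial $p_n(x;\mathbf a|q)$, so any two of them are equal after clearing the normalizing $q$-Pochhammer prefactors. The strategy is therefore: first extract from the statement of Corollary~\ref{cor:4.1} the identifications among the various ${}_7W_6^{-1}$ and ${}_3\phi_2$ pieces, then relabel the parameters $a_1,a_2,a_3,z$ in terms of the free symbols $a,c,d,e$ appearing in Corollary~\ref{cor:4.3}, and finally read off each of \eqref{cor4.3:r1}--\eqref{cor4.3:r8} as one such pairwise equality. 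Concretely, one fixes a dictionary: comparing the ${}_7W_6^{-1}$ in \eqref{cdqH:def5} with its stated form, one sets (for instance) $a=q^{-n}z^2$, and then the three upper parameters $\{a_s z\}$ become $\{c,d,e\}$ while the argument $q/(a_{123}z)$ matches $q^{n+1}a/(cde)$ once $a_{123}z$ is expressed via $a,c,d,e$; this produces the left-hand side \eqref{cor4.3:r1}. Each subsequent equality in the chain is then obtained by equating \eqref{cdqH:def5} to, in order, \eqref{cdqH:def6} (balanced vs. ${}_7W_6$ inversion, giving \eqref{cor4.3:r2}), the remaining ${}_7W_6^{-1}$ forms \eqref{cdqH:def7}, \eqref{cdqH:def8}, and the ${}_3\phi_2$ forms \eqref{cdqH:def1}--\eqref{cdqH:def4}, each time dividing out the explicitly written prefactors.

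The key computational steps, in order, are: (i) solve the dictionary equations so that the base $a$ and the parameters $c,d,e$ of Corollary~\ref{cor:4.3} are expressed as explicit monomials in $a_1,a_2,a_3,z$ (and conversely $a_1,a_2,a_3,z$ in terms of $a,c,d,e$); (ii) substitute this dictionary into each representation \eqref{cdqH:def1}--\eqref{cdqH:def5} and simplify the $q$-Pochhammer prefactors using \eqref{poch.id:3}--\eqref{poch.id:9}, the shift rule \eqref{poch.id:4}, the reversal \eqref{poch.id:5}, and the identities displayed just after \eqref{lambdamu} for symbols of the shape $(q^{-n}a;q)_n$, $(q^{-n}a;q)_{2n}$, and ratios thereof; (iii) verify that the ${}_7W_6^{-1}$ argument in each target formula \eqref{cor4.3:r1}--\eqref{cor4.3:r8} is the image of $q/(a_{123}z)$ (or the relevant argument) under the dictionary — this is a short monomial check; (iv) for the ${}_3\phi_2$ targets \eqref{cor4.3:r4a}--\eqref{cor4.3:r8}, additionally confirm balance (noting the remark after Corollary~\ref{cor:4.1} that a balanced ${}_3\phi_2$ of argument $q$ maps under inversion to argument $q^2/z$, which fixes several of the arguments). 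Where a direct pairwise comparison does not land exactly on one of the ten representations, one falls back on Corollary~\ref{cor:2.4} (the ${}_{r+1}\phi_r$ inversion) or Corollary~\ref{cor:2.5}/\eqref{inv87} (the ${}_8W_7$, hence ${}_7W_6$, inversion with $r=7$ specialized down via Lemma~\ref{leminftyWhyp}), applied once to the reference representation; these supply exactly the reversed-argument partners such as \eqref{cor4.3:r2}, \eqref{cor4.3:r10x} and \eqref{cor4.3:r4b}.

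The main obstacle I anticipate is bookkeeping of the prefactors, not any deep identity: each representation in Corollary~\ref{cor:4.1} carries a product of three or four $q$-Pochhammer symbols in $a_1,a_2,a_3,z$ (some of the form $(a_{123}z/q;q)_{2n}$ in the denominator), and after the dictionary substitution these must be collapsed to the comparatively compact prefactors $(qa,a,c,d,e;q)_n/\big((a;q)_{2n}(qa/c,qa/d,qa/e;q)_n\big)$ and its cousins appearing in \eqref{cor4.3:r2}--\eqref{cor4.3:r8}. Getting the powers of $q$ and the signs right — especially the $q^{-\binom n2}$ and $(-1)^n$ factors that enter through \eqref{poch.id:3} and \eqref{poch.id:5}, and the $q^{\binom n2}$ hidden in symbols like $(q^{-n}a;q)_n$ — is the delicate part; a useful sanity check at each stage is to send $c,d,e\to 0$ (equivalently one of the $a_j z^{\pm}\to 0$) and recover the known lower-parameter transformations, and to test $n=1$ numerically. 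Once the dictionary and one such prefactor reduction are done carefully, the remaining eight equalities follow by the same mechanical substitution, so the proof reduces to: \emph{apply the dictionary to Corollary~\ref{cor:4.1}, simplify via \eqref{poch.id:3}--\eqref{poch.id:9}, and invoke Corollary~\ref{cor:2.4} and Corollary~\ref{cor:2.5} for the argument-reversed forms.}
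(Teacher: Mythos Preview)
Your approach is sound and in fact coincides with the \emph{alternative} route the paper itself sketches in the last sentence of its proof: substitute the dictionary $z^2=q^na$, $a_pz=c$, $a_rz=d$, $a_tz=e$ into the representations of Corollary~\ref{cor:4.1} and solve each resulting equality for the ${}_7W_6^{-1}$ of \eqref{cdqH:def5}. The paper's \emph{primary} proof, however, is different and shorter: it starts from the ${}_8W_7$ transformation chain of \cite[Corollaries~8,~9]{CohlCostasSantos20b} and simply sends one parameter $f\to 0$ (or $c\to 0$, $d\to 0$, followed by a relabelling via the $S_4$ symmetry of the ${}_8W_7$), so that each of \eqref{cor4.3:r1}--\eqref{cor4.3:r8} drops out as the limit of an already-proved ${}_8W_7$ identity with essentially no fresh prefactor algebra. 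Your route is more self-contained---it does not invoke the companion paper---but pays with exactly the Pochhammer bookkeeping you anticipate; the paper's route defers all of that bookkeeping to the ${}_8W_7$ level, where it was done once and for all.

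Two small corrections to your outline: Corollary~\ref{cor:4.1} lists eight representations, not ten; and \eqref{cdqH:def6} is a ${}_7W_6^{+1}$, so under your dictionary it yields \eqref{cor4.3:r3}, not \eqref{cor4.3:r2}. The cleanest source for the ``argument-reversed'' lines \eqref{cor4.3:r2}, \eqref{cor4.3:r10b}, \eqref{cor4.3:r10x}, \eqref{cor4.3:r4b} is not Corollary~\ref{cor:2.5} but the $z\mapsto z^{-1}$ invariance of Remark~\ref{rem:2.7} applied to \eqref{cdqH:def5}, \eqref{cdqH:def7}, \eqref{cdqH:def8}, \eqref{cdqH:def3} respectively; using it cuts the prefactor simplification roughly in half.
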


\begin{proof}
Start with \cite[Corollaries 8, 9]{CohlCostasSantos20b}, then 
take the limit as $f\to 0$.
Taking the limit as $c\to 0$ of \cite[(23), (24)]{CohlCostasSantos20b},
%\eqref{cor3.5a.7}, \eqref{cor3.5a.7b} 
%in \cite[Corollaries 8, 9]{CohlCostasSantos20b}, 
then replacing $f\mapsto c$ 
by symmetry in $c,d,e,f$ of the ${}_8W_7$ 
produces \eqref{cor4.3:r9} and 
\eqref{cor4.3:r10}.
Taking the limit as $d\to 0$ in \cite[(25), (26)]{CohlCostasSantos20b} and
 then replacing $f\mapsto d$ by 
symmetry in $c,d,e,f$ of the ${}_8W_7$ produces
\eqref{cor4.3:r7} and \eqref{cor4.3:r8}.
This
completes the proof. Similarly, one can start with Corollary \ref{cor:4.1} and set $\{z^2,a_p,a_r,a_t\}\mapsto \{q^na,q^{-\frac{n}{2}}{c}/{\sqrt{a}},q^{-\frac{n}{2}}{d}/{\sqrt{a}},q^{-\frac{n}{2}}/{e}{\sqrt{a}}\}$ and solve for the ${}_7W_6^{-1}$ given in \eqref{cdqH:def5}.
\end{proof}

%%%%%%%%%%%%%%%%%%%%%%%%%%%%%%%%%%%%%%%%%%%%%
%\subsection{Terminating \textit{3}-parameter symmetric $q^{-1}$ transformations}
%%%%%%%%%%%%%%%%%%%%%%%%%%%%%%%%%%%%%%%%%%%%%

\noindent By studying the interrelations of the terminating basic hypergeometric functions in Corollary \ref{cor:4.11} we obtain the following transformation result for a terminating ${}_7W_6^{1}$.

\begin{cor}
\label{cor:4.13}
Let $n\in\mathbb N_0$, $a,c,d,e\in\CCast$,
$q\in\CCdag$.
Then, one has 
the following transformation
formulas for a terminating 
${}_7W_{6}^{1}$:
\begin{eqnarray}
&&\hspace{-0.8cm}
\Wphyp{7}{6}{1}{a}{q^{-n},c,d,e}{q,\frac{q^{n+2}a^2}{cde}}
\label{cor:4.13-0}
\\[-0.05cm]
&&=
q^{2\binom{n}{2}}\left(\frac{q^2a^2}{cde}\right)^n
\frac{(qa,a,c,d,e;q)_n}{(a;q)_{2n}(\frac{qa}{c},\frac{qa}{d},\frac{qa}{e};q)_n}
\Wphyp{7}{6}{1}{\frac{q^{-2n}}{a}}
{q^{-n},\frac{q^{-n}c}{a},\frac{q^{-n}d}{a},\frac{q^{-n}e}{a}}
{q,\frac{q^2a}{cde}}
\label{cor:4.13-1}\\[-0.05cm]
&&\label{cor:4.13-9}=
\frac{(qa,\frac{qa}{cd},\frac{qa}{ce},\frac{qa}{de};q)_n}
{(\frac{qa}{c},\frac{qa}{d},\frac{qa}{e},\frac{qa}{cde};q)_n}
\Wphyp{7}{6}{1}{\frac{q^{-n-1}cde}{a}}
{q^{-n},c,d,e}
{q,\frac{q^{-n}cde}{a^2}}
\\[-0.05cm]
&&\label{cor:4.13-10}=
\frac{(qa;q)_n}{(\frac{q^2a^2}{cde};q)_n}
\Wphyp{7}{6}{1}{\frac{qa^2}{cde}}
{q^{-n},\frac{qa}{cd},\frac{qa}{ce},\frac{qa}{de}}
{q,q^{n+1}a}
\\[-0.05cm]
&&\label{cor:4.13-10b}
=
\frac{(qa,c,d,e;q)_n}{(\frac{qa}{c},\frac{qa}{d},\frac{qa}{e},\frac{cde}{qa};q)_n}
\Wphyp{7}{6}{1}{\frac{q^{1-n}a}{cde}}
{q^{-n},\frac{qa}{cd},\frac{qa}{ce},\frac{qa}{de}}
{q,\frac{q^{1-n}}{a}}
\\[-0.05cm]
&&\label{cor:4.13-10x}
=
\frac{q^{2\binom{n}{2}}(qa)^n(qa,\frac{qa}{cd},\frac{qa}{ce},\frac{qa}{de},\frac{qa^2}{cde};q)_n}{(\frac{qa^2}{cde};q)_{2n}(\frac{qa}{c},\frac{qa}{d},\frac{qa}{e};q)_n}
\Wphyp{7}{6}{1}{\frac{q^{-2n-1}cde}{a^2}}
{q^{-n},\frac{q^{-n}c}{a},\frac{q^{-n}d}{a},\frac{q^{-n}e}{a}}
{q,\frac{cde}{a}}
\\[-0.05cm]
&&=
\frac{(qa,d,\frac{qa}{ce};q)_n}{\left(\frac{qa}{c},\frac{qa}{e},\frac{d}{c};q\right)_n}
\Wphyp{7}{6}{-1}{\frac{q^{-n}c}{d}}
{q^{-n},\frac{q^{-n}c}{a},\frac{qa}{de},c}
{q,\frac{q^ne}{c}}
\label{cor:4.13-2}\\[-0.05cm]
&&=
\frac{\left(qa;q\right)_n}
{(\frac{qa}{e};q)_n}
\qhyp{3}{2}{q^{-n},\frac{qa}{cd},e}{\frac{qa}{c},\frac{qa}{d}}{q,\frac{q^{n+1}a}{e}}
\label{cor:4.13-3}\\[-0.05cm]
&&=\left(\frac{qa}{cd}\right)^n\frac{\left(qa,c,d;q\right)_n}
{(\frac{qa}{c},\frac{qa}{d},\frac{qa}{e};q)_n}
\qhyp{3}{2}{q^{-n},\frac{q^{-n}e}{a},\frac{qa}{cd}}{\frac{q^{1-n}}{c},\frac{q^{1-n}}{d}}{q,
\frac{q}{e}}\label{cor:4.13-4}\\[-0.05cm]
&&=c^n\frac{\left(qa,\frac{qa}{cd},\frac{qa}{ce};q\right)_n}
{(\frac{qa}{c},\frac{qa}{d},\frac{qa}{e};q)_n}
\qhyp{3}{2}{q^{-n},\frac{q^{-n}c}{a},c}{\frac{q^{-n}cd}{a},\frac{q^{-n}ce}{a}}{q,\frac{de}{a}}.
\label{cor:4.13-8}\\[-0.05cm]
&&=
\frac{\left(qa,\frac{qa}{cd};q\right)_n}{(\frac{qa}{c},\frac{qa}{d};q)_n}
\qhyp{3}{2}{q^{-n},c,d}{\frac{q^{-n}cd}{a},\frac{qa}{e}}{q,q}
\label{cor:4.13-5}
\\[-0.05cm]
&&=q^{\binom{n}{2}}
\left(\frac{-{qa}}{e}\right)^n\frac{\left(qa,e,\frac{qa}{cd};q\right)_n}
{(\frac{qa}{c},\frac{qa}{d},\frac{qa}{e};q)_n}
\qhyp{3}{2}{q^{-n},\frac{q^{-n}c}{a},\frac{q^{-n}d}{a}}{\frac{q^{-n}cd}{a},\frac{q^{1-n}}{e}}{q,q}\label{cor:4.13-6}\\[-0.05cm]
&&=
\frac{\left(qa,c;q\right)_n}
{\left(\frac{qa}{d},\frac{qa}{e};q\right)_n}
\qhyp{3}{2}{q^{-n},\frac{qa}{cd},\frac{qa}{ce}}{\frac{qa}{c},\frac{q^{1-n}}{c}}{q,q}\label{cor:4.13-7}.
\end{eqnarray}
\end{cor}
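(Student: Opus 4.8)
The plan is to obtain Corollary \ref{cor:4.13} as the $q\mapsto 1/q$ image of Corollary \ref{cor:4.3}, in exactly the way Corollary \ref{cor:4.11} was obtained from Corollary \ref{cor:4.1}. Concretely, I would take each identity of Corollary \ref{cor:4.3}, replace $q$ by $1/q$ throughout, and then use \eqref{poch.id:3} to rewrite every $q^{-1}$-shifted factorial as a $q$-shifted factorial. Under this operation the $k$-dependent factors $(-a)^k q^{-\binom{k}{2}}$ produced by \eqref{poch.id:3} shift the exponent $1+s-r$ in \eqref{2.11}, so a terminating ${}_7W_6^{-1}$ in base $q^{-1}$ turns into a terminating ${}_7W_6^{1}$ in base $q$, a terminating ${}_7W_6^{1}$ turns into a terminating ${}_7W_6^{-1}$, and a terminating ${}_3\phi_2$ stays a ${}_3\phi_2$ (for the ${}_3\phi_2$ pieces one may instead invoke \eqref{qtopiden} directly). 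Matching the left-hand sides then forces the reciprocal-type relabeling $(a,c,d,e)\mapsto(a^{-1},c^{-1},d^{-1},e^{-1})$ of the free parameters, and the same relabeling carries the remaining equalities of Corollary \ref{cor:4.3} onto \eqref{cor:4.13-1}--\eqref{cor:4.13-7}.

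As an alternative (mirroring the second proof of Corollary \ref{cor:4.3}), I would work entirely inside Corollary \ref{cor:4.11}: all of \eqref{cdqiH:1}--\eqref{cdqiH:5} equal $p_n(x;{\bf a}|q^{-1})$, so I would specialise $\{z^2,a_1,a_2,a_3\}$ to explicit monomials in $\{a,c,d,e\}$ chosen so that \eqref{cdqiH:5} becomes the left-hand side ${}_7W_6^{1}$, and then read off the other representations; the $S_3$ symmetry of the polynomial in $a_1,a_2,a_3$ generates the permuted variants, e.g.\ the ${}_7W_6^{-1}$ of \eqref{cor:4.13-2} from \eqref{cdqiH:6} and the several terminating ${}_3\phi_2$'s from \eqref{cdqiH:1}--\eqref{cdqiH:4}. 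Running the two routes against each other is a convenient safeguard against transcription errors.

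I expect the genuinely error-prone step to be the bookkeeping of normalisation prefactors rather than anything conceptual. Each use of \eqref{poch.id:3} injects a $(-a)^n q^{\mp\binom{n}{2}}$, so I would have to track the cumulative powers of $q^{\binom{n}{2}}$, the signs $(-1)^n$, and the monomials such as $a_{123}^n$, and verify that after the parameter relabeling they collapse to precisely the prefactors displayed on the right-hand sides --- including the delicate $(a;q)_{2n}$ and $(qa^2/(cde);q)_{2n}$ denominators in \eqref{cor:4.13-1} and \eqref{cor:4.13-10x}. A related point is the last slot (the argument) of each ${}_7W_6^{\pm1}$ and ${}_3\phi_2$: the base change both inverts it and rescales it by the accumulated parameter factors, so it is not simply the $q\mapsto 1/q$ image of the argument in Corollary \ref{cor:4.3}; here the appropriate balancing relation (cf.\ the remark following \eqref{inv87}) serves as a consistency check that the rescaled argument is the $q^{n+2}a^2/(cde)$ written in \eqref{cor:4.13-0}.
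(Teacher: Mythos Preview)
Your alternative route --- specialise the parameters in Corollary \ref{cor:4.11} so that \eqref{cdqiH:5} produces the left-hand ${}_7W_6^{1}$ and read off the remaining representations --- is exactly what the paper does; it records the explicit substitution $z^{2}=q^{n}a$, $a_p=q^{n/2}\sqrt{a}/c$, $a_r=q^{n/2}\sqrt{a}/d$, $a_t=q^{n/2}\sqrt{a}/e$ and then divides every line by the common normalisation factor ${\sf A}_n(a,c,d,e|q)=q^{\binom{n}{2}}(qa)^{n/2}/\bigl(c^n(\tfrac{qa}{cd},\tfrac{qa}{ce};q)_n\bigr)$.

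Your primary approach, applying $q\mapsto 1/q$ directly to each identity of Corollary \ref{cor:4.3} and then relabeling $(a,c,d,e)\mapsto(a^{-1},c^{-1},d^{-1},e^{-1})$, is a valid alternative the paper does not use for this corollary: it simply transports the passage from Corollary \ref{cor:4.1} to Corollary \ref{cor:4.11} up to the level of transformation identities. That route is more mechanical and spares you having to guess the substitution, whereas the paper's route keeps the link to the continuous dual $q^{-1}$-Hahn polynomial explicit and parallels the second proof it gave for Corollary \ref{cor:4.3}. Both paths involve the same $q$-Pochhammer bookkeeping you already flagged, and either is acceptable.
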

\begin{proof}
Start with Corollary \ref{cor:4.11} and set $z^{2}
=q^na$, $a_p=q^{\frac{n}{2}}\frac{\sqrt{a}}{c}$,
$a_r=q^{\frac{n}{2}}\frac{\sqrt{a}}{d}$,
$a_t=q^{\frac{n}{2}}\frac{\sqrt{a}}{e}$.
Then, multiply every formula by the factor
\[
{\sf A}_n(a,c,d,e|q):=
\frac{q^{\binom{n}{2}}(qa)^\frac{n}{2}}{c^n(\frac{qa}{cd},\frac{qa}{ce};q)_n}.
\]
With simplification, this completes the proof.
\end{proof}

%%%%%%%%%%%%%%%%%%%%%%%%%%%%%%%%%%%%%%%%%%%%%
\subsection{Terminating \textit{3}-parameter $q$ and $q^{-1}$ symmetric interchange transformations}
%%%%%%%%%%%%%%%%%%%%%%%%%%%%%%%%%%%%%%%%%%%%%
\begin{cor} \label{cor:4.4} Let $n\in\mathbb N_0$, $a,c,d,e\in\CCast$,
$q\in\CCdag$.
Then, one has the following parameter interchange 
transformations for a terminating 
${}_{7}W_{6}^{1}$:
\begin{eqnarray}
&&\hspace{-2cm}\label{cor4.4:r1}
\hspace{0.5cm}
\Wphyp{7}{6}{1}{\frac{q^{-n}c}{d}}
{q^{-n},\frac{q^{-n}c}{a},\frac{qa}{de},c}
{q,\frac{qe}{d}}
\\[-0.05cm]
&&\label{cor4.4:r2}
\hspace{0.5cm}=
\frac{
(\frac{qa}{cd},\frac{qa}{e},\frac{d}{c};q)_n
}{
(\frac{qa}{ce},\frac{qa}{d},\frac{e}{c};q)_n
}
\Wphyp{7}{6}{1}{\frac{q^{-n}c}{e}}
{q^{-n},\frac{q^{-n}c}{a},\frac{qa}{ed},c}
{q,\frac{qd}{e}}
\\[-0.05cm]
&&\label{cor4.4:r3}
\hspace{0.5cm}=
\left(\frac{c}{d}\right)^n
\frac{
(\frac{qa}{de},\frac{qa}{c},\frac{d}{c},c;q)_n 
}{
(\frac{qa}{ce},\frac{qa}{d},\frac{c}{d},d;q)_n
}
\Wphyp{7}{6}{1}{\frac{q^{-n}d}{c}}
{q^{-n},\frac{q^{-n}d}{a},\frac{qa}{ce},d}
{q,\frac{qe}{c}}\\[-0.05cm]
&&\label{cor4.4:r4}
\hspace{0.5cm}=
\left(\frac{c}{d}\right)^n
\frac{
(\frac{qa}{dc},\frac{qa}{e},\frac{d}{c},e;q)_n
}{
(\frac{qa}{ce},\frac{qa}{d},\frac{e}{d},d;q)_n
}
\Wphyp{7}{6}{1}{\frac{q^{-n}d}{e}}
{q^{-n},\frac{q^{-n}d}{a},\frac{qa}{ec},d}
{q,\frac{qc}{e}}
\\[-0.05cm]
&&\label{cor4.4:r5}
\hspace{0.5cm}=
\left(\frac{c}{e}\right)^n
\frac{
(\frac{qa}{ed},\frac{qa}{c},\frac{d}{c},c;q)_n
}{
(\frac{qa}{ce},\frac{qa}{d},\frac{c}{e},d;q)_n
}
\Wphyp{7}{6}{1}{\frac{q^{-n}e}{c}}
{q^{-n},\frac{q^{-n}e}{a},\frac{qa}{cd},e}
{q,\frac{qd}{c}}
\\[-0.05cm]
&&\label{cor4.4:r6}
\hspace{0.5cm}=
\left(\frac{c}{e}\right)^n
\frac{
(\frac{d}{c};q)_n
}{
(\frac{d}{e};q)_n
}
\Wphyp{7}{6}{1}{\frac{q^{-n}e}{d}}
{q^{-n},\frac{q^{-n}e}{a},\frac{qa}{dc},e}
{q,\frac{qc}{d}}.
\end{eqnarray}
\end{cor}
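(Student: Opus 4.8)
The plan is to deduce Corollary~\ref{cor:4.4} from the terminating ${}_7W_6^{1}$ representation \eqref{cdqH:def6} of the continuous dual $q$-Hahn polynomials together with the symmetry of $p_n(x;{\bf a}|q)$ in the three free parameters $a_1,a_2,a_3$. In \eqref{cdqH:def6} the ${}_7W_6^{1}$ depends on an \emph{ordered} triple $(p,r,t)$ ranging over the permutations of $\{1,2,3\}$, and the roles of $p$, $r$, $t$ are pairwise distinct (the base $q^{-n}a_p/a_r$ separates $p$ from $r$, and the argument $qa_t/a_r$ isolates $t$); so the $3!=6$ choices of $(p,r,t)$ give six a priori different ``$q$-Pochhammer prefactor times ${}_7W_6^{1}$'' expressions, all equal to the single polynomial $p_n(x;{\bf a}|q)$.

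First I would apply the substitution already used in the proof of Corollary~\ref{cor:4.3}, namely $z^{2}=q^{n}a$ and $a_p=q^{-n/2}c/\sqrt{a}$, $a_r=q^{-n/2}d/\sqrt{a}$, $a_t=q^{-n/2}e/\sqrt{a}$, so that $a_pz^{\pm}=\{c,\,q^{-n}c/a\}$, $q^{1-n}/a_{rt}=qa/(de)$, and the argument of the series becomes $qe/d$. With $(p,r,t)=(1,2,3)$ this turns the ${}_7W_6^{1}$ in \eqref{cdqH:def6} into exactly the series on the right of \eqref{cor4.4:r1}, and the ordered triple obtained by acting with $\sigma\in S_3$ turns it into that same series with $(c,d,e)$ replaced by $(\sigma c,\sigma d,\sigma e)$ --- precisely the six series appearing in \eqref{cor4.4:r1}--\eqref{cor4.4:r6}. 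Writing $p_n=Q_\sigma(a,c,d,e)\,{}_7W_6^{1}(\sigma c,\sigma d,\sigma e)$ for each $\sigma$, where $Q_\sigma$ is the image of $a_p^{-n}(a_{pt},a_rz^{\pm};q)_n/(a_r/a_p;q)_n$ under the substitution, dividing the $\sigma$-representation by the identity representation kills $p_n$ and yields ${}_7W_6^{1}(\sigma c,\sigma d,\sigma e)=\bigl(Q_{\mathrm{id}}/Q_\sigma\bigr)\,{}_7W_6^{1}(c,d,e)$; thus each line \eqref{cor4.4:r2}--\eqref{cor4.4:r6} is just this identity for the appropriate $\sigma$, with the stated prefactor being the ratio $Q_{\mathrm{id}}/Q_\sigma$ of two products of $q$-shifted factorials. (An equivalent route is to invoke Corollary~\ref{cor:4.3} directly: the terminating ${}_7W_6^{-1}$ in \eqref{cor4.3:r1} is manifestly invariant under every permutation of $\{c,d,e\}$, since each of $c,d,e$ is paired with its companion among $qa/c,qa/d,qa/e$ and the argument $q^{n+1}a/(cde)$ is symmetric, while \eqref{cor4.3:r3} expresses this invariant series as an explicit $S_3$-asymmetric $q$-Pochhammer prefactor times the ${}_7W_6^{1}$ of \eqref{cor4.4:r1}; permuting and dividing gives the same chain of transformations.)

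I expect the main obstacle --- and essentially the whole content of the proof --- to be the prefactor bookkeeping: one must first pair each $\sigma\in S_3$ with the correct line \eqref{cor4.4:r2}--\eqref{cor4.4:r6} by matching base and argument, and then reduce the ratio $Q_{\mathrm{id}}/Q_\sigma$ of shifted-factorial products to the displayed closed form. This reduction relies on the reflection identity \eqref{poch.id:5} to rewrite reciprocal $q$-Pochhammer symbols such as $(d/c;q)_n$ as $(q^{1-n}c/d;q)_n$ (up to an explicit $(-d/c)^nq^{\binom{n}{2}}$ factor) --- which is exactly what produces the monomial factors $(c/d)^n$ and $(c/e)^n$ appearing in \eqref{cor4.4:r3}--\eqref{cor4.4:r6}, while the $\binom{n}{2}$-exponents and signs coming from the various applications of \eqref{poch.id:5} must be checked to cancel in pairs --- together with the splitting identity \eqref{poch.id:4}. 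The hypotheses $a,c,d,e\in\CCast$ and $q\in\CCdag$ guarantee that none of the shifted factorials occurring in a denominator vanishes, so every division performed along the way is legitimate.
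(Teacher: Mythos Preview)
Your proposal is correct and essentially coincides with the paper's proof: the paper's one-line argument is exactly the ``equivalent route'' you describe in parentheses, namely starting from \eqref{cor4.3:r3}, using the manifest $S_3$-symmetry of the ${}_7W_6^{-1}$ in \eqref{cor4.3:r1}, and permuting $c,d,e$. Your main route via \eqref{cdqH:def6} with the substitution $z^2=q^na$, $a_p=q^{-n/2}c/\sqrt{a}$, etc., is the same idea one step earlier in the derivation (since \eqref{cor4.3:r3} itself comes from equating \eqref{cdqH:def5} and \eqref{cdqH:def6} under precisely that substitution), so the two routes differ only in whether you quote Corollary~\ref{cor:4.3} or rederive the relevant piece of it.
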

\begin{proof}
\noindent
Start with \eqref{cor4.3:r3} and consider all permutations of the symmetric parameters $c,d,e$ which produce non-trivial transformations.
\end{proof}

\begin{cor}\label{cor:4.5} Let $n\in\mathbb N_0$, $a,c,d,e\in\CCast$, 
$q\in\CCdag$.
%such that $|q|\ne 1$. 
Then, one has the following parameter interchange transformations for a terminating ${}_3\phi_2$:
\begin{eqnarray}
&&\hspace{-2cm}\label{cor4.5:r1}\hspace{0.5cm}
\qhyp{3}{2}{q^{-n},c,d}{\frac{q^{-n}cd}{a},\frac{qa}{e}}{q,\frac{q}{e}}
\\[-0.05cm]&&\label{cor4.3:r4a:r1}\hspace{0.5cm}
\!=\!
\frac{(\frac{qa}{ce},\frac{qa}{d};q)_n}{
(\frac{qa}{cd},\frac{qa}{e};q)_n}
\qhyp{3}{2}{q^{-n},c,e}{\frac{q^{-n}ce}{a},\frac{qa}{d}}{q,\frac{q}{d}}
\\[-0.05cm]&&\label{cor4.5:r4a:r1}\hspace{0.5cm}
\!=\!
\frac{(\frac{qa}{de},\frac{qa}{c};q)_n}{
(\frac{qa}{cd},\frac{qa}{e};q)_n}
\qhyp{3}{2}{q^{-n},d,e}{\frac{q^{-n}de}{a},\frac{qa}{c}}{q,\frac{q}{c}}.
\end{eqnarray}
\end{cor}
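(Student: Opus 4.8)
The plan is to prove Corollary \ref{cor:4.5} directly from Corollary \ref{cor:4.1} by the same kind of specialization used in the proof of Corollary \ref{cor:4.13}, exploiting the full symmetry of the continuous dual $q$-Hahn polynomials in their three free parameters $a_1,a_2,a_3$. The key observation is that \eqref{cor4.5:r1} has exactly the shape of the terminating ${}_3\phi_2$ appearing in \eqref{cdqH:def4}. So first I would take the representation \eqref{cdqH:def4}, namely $p_n(x;{\bf a}|q)=z^n(a_p/z,a_r/z;q)_n\,{}_3\phi_2(q^{-n},a_tz,q^{1-n}/a_{pr};q^{1-n}z/a_p,q^{1-n}z/a_r;q,q)$, and perform the substitution that turns it into the ${}_3\phi_2$ of \eqref{cor4.5:r1}. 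Matching $\{q^{-n},c,d\}$ with $\{q^{-n},a_tz,q^{1-n}/a_{pr}\}$ and the bottom parameters $\{q^{-n}cd/a,qa/e\}$ with $\{q^{1-n}z/a_p,q^{1-n}z/a_r\}$ dictates the change of variables; solving these relations gives the $a_p,a_r,a_t$ (equivalently $z$ and the three parameters) in terms of $a,c,d,e$, after which \eqref{cdqH:def4} reads $p_n = ({\rm explicit\ prefactor})\times{}_3\phi_2(q^{-n},c,d;q^{-n}cd/a,qa/e;q,q)$.

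Next, having expressed $p_n(x;{\bf a}|q)$ as a known prefactor times the ${}_3\phi_2$ in \eqref{cor4.5:r1}, I would invoke the symmetry of $p_n$ under permuting $a_1,a_2,a_3$: the three distinct choices of the triple $(p,r,t)$ — equivalently the three ways of singling out which of $c,d,e$ plays the role of the ``$a_t z$''-type entry versus the two ``$a_{pr}$''-type entries — all give representations of the \emph{same} polynomial $p_n$. Equating the $(p,r,t)$-representation with the representations obtained by the cyclic relabelings $(p,r,t)\mapsto$ permutations that fix $q^{-n}$ but swap $d\leftrightarrow e$ or $c\leftrightarrow e$ (there are essentially $\binom{3}{1}=3$ inequivalent ones, matching the three lines of the corollary), and dividing out the common polynomial $p_n$, yields \eqref{cor4.3:r4a:r1} and \eqref{cor4.5:r4a:r1}: the ratio of the two prefactors supplies the explicit $q$-Pochhammer quotients $(qa/ce,qa/d;q)_n/(qa/cd,qa/e;q)_n$ and $(qa/de,qa/c;q)_n/(qa/cd,qa/e;q)_n$. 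In practice I would record a single auxiliary prefactor, say akin to the ${\sf A}_n$ used in Corollary \ref{cor:4.13}, normalize every representation by it, and then the three normalized forms are literally equal. Alternatively, and perhaps more transparently, one can derive \eqref{cor4.5:r1} as the $f\to0$ (and subsequent relabeling) limit of the balanced-${}_4\phi_3$ interchange transformations already contained in \cite[Corollaries 8, 9]{CohlCostasSantos20b}, exactly as the ${}_7W_6$ interchanges of Corollary \ref{cor:4.4} were obtained from \eqref{cor4.3:r3}; I would mention this as the complementary route.

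The main obstacle — really the only nonroutine part — is bookkeeping: getting the change of variables in \eqref{cdqH:def4} exactly right, including which $q$-power of $z$ and which sign/normalization factor $z^n(a_p/z,a_r/z;q)_n$ contributes, and then correctly tracking how that prefactor transforms under each of the three relabelings so that the leftover ratio collapses to precisely the stated $q$-Pochhammer quotients. One has to be careful that the constraint linking $z^{2}$ (or $x$) to $a,c,d,e$ is consistent across all three choices of $(p,r,t)$ — it is, because $x=\tfrac12(z+z^{-1})$ and hence $p_n$ is what is actually being held fixed (cf.\ Remark \ref{rem:2.7}) — and that the argument of each ${}_3\phi_2$ remains $q$ (i.e.\ the series stays balanced) under the substitution, which it does since $q^{1-n}\cdot c\cdot d=(q^{-n}cd/a)\cdot(qa/e)\cdot(q/e)^{-1}$ forces the balancing condition. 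Once the substitution and the normalizing factor are pinned down, equating the three relabeled representations and simplifying the $q$-Pochhammer ratios via \eqref{poch.id:4} completes the proof.

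\begin{proof}
Start with \eqref{cdqH:def4} and set $z^{2}=q^{n}a$, $a_{p}=q^{n/2}\sqrt{a}/c$, $a_{r}=q^{n/2}\sqrt{a}/d$, $a_{t}=q^{n/2}\sqrt{a}/e$, so that the ${}_3\phi_2$ in \eqref{cdqH:def4} becomes the one in \eqref{cor4.5:r1}. Dividing the resulting identity by the common prefactor, and then repeating the computation with the roles of the symmetric parameters $c,d,e$ permuted — which leaves the continuous dual $q$-Hahn polynomial $p_n(x;{\bf a}|q)$ unchanged — yields, upon simplifying the $q$-Pochhammer ratios with \eqref{poch.id:4}, the transformations \eqref{cor4.3:r4a:r1} and \eqref{cor4.5:r4a:r1}. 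Equivalently, one may take the limit $f\to0$ in the balanced ${}_4\phi_3$ interchange transformations of \cite[Corollaries 8, 9]{CohlCostasSantos20b} and relabel. This completes the proof.
\end{proof}
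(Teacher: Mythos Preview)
Your overall strategy --- exploit the $S_3$-symmetry of the continuous dual $q$-Hahn polynomials in $a_1,a_2,a_3$ --- is exactly the mechanism behind the paper's proof, but the specific execution is wrong. The ${}_3\phi_2$ in \eqref{cor4.5:r1} has argument $q/e$, not $q$, so it cannot be obtained from \eqref{cdqH:def4}, whose argument is fixed at $q$, by any specialization of parameters; your claim that ``the argument of each ${}_3\phi_2$ remains $q$'' is simply false, and the balancing check you wrote does not hold. The representation that actually matches \eqref{cor4.5:r1} is \eqref{cdqH:def3}, whose argument is $q/(a_t z)$, together with the substitution from the proof of Corollary~\ref{cor:4.3}, namely $z^{2}=q^{n}a$, $a_p=q^{-n/2}c/\sqrt{a}$, $a_r=q^{-n/2}d/\sqrt{a}$, $a_t=q^{-n/2}e/\sqrt{a}$; the substitution you wrote, $a_p=q^{n/2}\sqrt{a}/c$ etc., is the one adapted to the $q^{-1}$ polynomials in Corollary~\ref{cor:4.13} and does not turn either \eqref{cdqH:def3} or \eqref{cdqH:def4} into \eqref{cor4.5:r1}.

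The paper's own argument is shorter: Corollary~\ref{cor:4.3} has already expressed the ${}_7W_6^{-1}$ of \eqref{cor4.3:r1}, which is manifestly symmetric in $c,d,e$, as a $q$-Pochhammer prefactor times exactly the ${}_3\phi_2$ of \eqref{cor4.5:r1} --- this is \eqref{cor4.3:r4a}. So one just permutes $c,d,e$ on the right-hand side of \eqref{cor4.3:r4a} and equates, and the ratio of prefactors gives the stated $q$-Pochhammer quotients immediately. Your alternative route via the $f\to0$ limit of the Askey--Wilson ${}_4\phi_3$ interchange transformations in \cite{CohlCostasSantos20b} is valid and amounts to the same thing, since that is how Corollary~\ref{cor:4.3} itself was obtained.
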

\begin{proof}
\noindent 
Start with \eqref{cor4.3:r4a} and consider all permutations of the symmetric parameters $c,d,e$ which produce non-trivial transformations.
\end{proof}

\begin{rem}
Another set of parameter interchange transformations
can be obtained by considering all permutations
of the symmetric parameters $c,d,e$ in
\eqref{cor4.3:r4b} and \eqref{cor4.3:r5}
respectively. However, one can see that
these are equivalent to the above Corollary 
\ref{cor:4.5} by replacing
\begin{eqnarray}
&&\hspace{-5.0cm}(a,c,d,e)\mapsto \left(\frac{q^{-2n}}a,\frac{q^{-n}c}a,\frac{q^{-n}d}a,\frac{q^{-n}e}a\right),\\
&&\hspace{-5.0cm}(a,c,d,e)\mapsto \left(\frac{qa}{ce},\frac{qa}{de},\frac{qa}{cd},\frac{qa}{cde}\right),
\end{eqnarray}
respectively.
\end{rem}

\begin{cor}\label{cor:4.9} Let $n\in\mathbb N_0$, $a,c,d,e\in\CCast$, 
$q\in\CCdag$.
%such that $|q|\ne 1$. 
Then, one has the following parameter 
interchange transformations for a terminating ${}_3\phi_2$:
\begin{eqnarray}
&&\label{cor4.9:r1}\hspace{0.5cm}
\hspace{-2cm}\qhyp{3}{2}{q^{-n},\frac{qa}{de},c}
{\frac{qa}{d},\frac{qa}{e}}{q,q}
%\\[-0.05cm]&&\label{cor4.3:r7:r1}\hspace{0.5cm}
=
\left(\frac{c}{d}\right)^n
\frac{
(\frac{qa}{c};q)_n
}
{(\frac{qa}{d};q)_n}
\qhyp{3}{2}{q^{-n},\frac{qa}{ce},d}
{\frac{qa}{c},\frac{qa}{e}}{q,q}
\\[-0.05cm]&&\label{cor4.9:r2}\hspace{2.2cm}
=
\left(\frac{c}{e}\right)^n
\frac{
(\frac{qa}{c};q)_n
}
{(\frac{qa}{e};q)_n}
\qhyp{3}{2}{q^{-n},\frac{qa}{cd},e}
{\frac{qa}{c},\frac{qa}{d}}{q,q}\!.
\end{eqnarray}
\end{cor}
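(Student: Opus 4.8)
The statement to prove is Corollary~\ref{cor:4.9}, which asserts two parameter-interchange transformations for a terminating ${}_3\phi_2$. Let me sketch a proof plan.

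The plan is to derive these transformations directly from the representations of the continuous dual $q$-Hahn polynomials established in Corollary~\ref{cor:4.1}, exploiting the fact that those polynomials are symmetric in their three free parameters $a_1,a_2,a_3$. The key observation is that the ${}_3\phi_2$ appearing on the left-hand side of \eqref{cor4.9:r1} is precisely (up to an explicit prefactor) the one occurring in representation \eqref{cdqH:def3}, which is indexed by a choice of an ordered pair $(p,r)$ of distinct indices drawn from $\mathbf 3$, with $t$ the remaining index. Concretely, in \eqref{cdqH:def3} the series is $\qhyp{3}{2}{q^{-n},a_pz,a_rz}{a_{pr},\frac{q^{1-n}z}{a_t}}{q,\frac{q}{a_tz}}$. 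Under the substitution that sends $z^2\mapsto q^n a$ together with an appropriate assignment of $a_p,a_r,a_t$ in terms of $c,d,e$ and $a$ (the same kind of substitution used in the proofs of Corollaries~\ref{cor:4.3} and \ref{cor:4.13}), this series becomes the ${}_3\phi_2$ on the left of \eqref{cor4.9:r1}.

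First I would fix the substitution explicitly: following the pattern of the proof of Corollary~\ref{cor:4.13}, set $z^2 = q^n a$ and choose $a_p, a_r, a_t$ proportional to $1/c$, $1/d$, $1/e$ (up to powers of $q$ and $\sqrt a$) so that the argument $q/(a_t z)$ and the denominator parameters match those in \eqref{cor4.9:r1}. Next I would write down the single polynomial $p_n(x;\mathbf a|q)$ using representation \eqref{cdqH:def3} for three different choices of the ordered pair $(p,r)$ — namely the pairs that, after the substitution, correspond to keeping $c$ in the numerator while cycling $d$ and $e$ through the roles of "second numerator parameter" and "the excluded parameter $a_t$". Since all three expressions equal the same polynomial, equating the first with the second and with the third, and dividing out the common normalizing factor, yields exactly the two equalities \eqref{cor4.9:r1} and \eqref{cor4.9:r2}. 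The prefactors $(c/d)^n (\frac{qa}{c};q)_n/(\frac{qa}{d};q)_n$ and $(c/e)^n (\frac{qa}{c};q)_n/(\frac{qa}{e};q)_n$ then emerge by simplifying the ratios of the $a_p^{-n}$-type and $q$-Pochhammer prefactors attached to \eqref{cdqH:def3} under the chosen substitutions, using \eqref{poch.id:4} and \eqref{lambdamu} as needed.

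Alternatively — and this is the route the paper's other proofs suggest is cleanest — I would start from one of the already-proven transformations in Corollary~\ref{cor:4.3}, specifically \eqref{cor4.3:r7}, which reads $\qhyp{3}{2}{q^{-n},\frac{qa}{de},c}{\frac{qa}{d},\frac{qa}{e}}{q,q}$ up to a prefactor, and simply apply all permutations of the symmetric parameters $c,d,e$ of the underlying ${}_7W_6^{-1}$ that produce genuinely new ${}_3\phi_2$'s; this is the analogue of how Corollary~\ref{cor:4.4} and Corollary~\ref{cor:4.5} are proved. The symmetric parameters of the ${}_7W_6^{-1}$ in \eqref{cor4.3:r1} are $c,d,e$, so the interchange $c\leftrightarrow d$ (respectively $c\leftrightarrow e$) applied to \eqref{cor4.3:r7}, combined with restoring the normalization by multiplying through by the appropriate ratio of prefactors, gives \eqref{cor4.9:r1} (respectively \eqref{cor4.9:r2}). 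The main obstacle, and the only place where care is genuinely required, is bookkeeping the prefactors: one must verify that the product of the $(1/c)^n (qa;q)_n/(\frac{qa}{c};q)_n$-type factor from \eqref{cor4.3:r7} with its image under the parameter swap collapses to the stated $(c/d)^n (\frac{qa}{c};q)_n/(\frac{qa}{d};q)_n$, which is a short but fiddly $q$-Pochhammer manipulation. Everything else — the equality of the two series expressions and the termination conditions $b_m\notin\Omega_q^n$ — is immediate from the corresponding facts for Corollary~\ref{cor:4.3}. Hence the proof reduces to: apply the $c\leftrightarrow d$ and $c\leftrightarrow e$ symmetries of the ${}_7W_6^{-1}$ to \eqref{cor4.3:r7} and simplify.
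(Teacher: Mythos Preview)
Your proposal is correct, and your alternative route --- starting from \eqref{cor4.3:r7} and applying the permutations $c\leftrightarrow d$ and $c\leftrightarrow e$ of the symmetric parameters of the ${}_7W_6^{-1}$ --- is precisely the paper's own proof. The prefactor bookkeeping you flag is indeed the only computation, and it simplifies exactly as you indicate.
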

\begin{proof}
\noindent
Start with \eqref{cor4.3:r7} and consider all permutations of the symmetric parameters $c,d,e$ which produce non-trivial transformations.
\end{proof}

\begin{rem}
Another set of parameter interchange transformations
can be obtained by considering all permutations
of the symmetric parameters $c,d,e$ in
\eqref{cor4.3:r6} and \eqref{cor4.3:r8}
respectively. However, one can see that
these are equivalent to the above Corollary 
\ref{cor:4.9} by replacing
\begin{eqnarray}
&&\hspace{-5.0cm}(a,c,d,e)\mapsto \left(\frac{q^{-1-n}cde}{a}, c,d,e\right),\\
&&\hspace{-5.0cm}(a,c,d,e)\mapsto \left(\frac{q^{1-n}a}{cde},\frac{qa}{de},\frac{qa}{ec},\frac{qa}{cd}\right),
\end{eqnarray}
respectively.
\end{rem}

%%%%%%%%%%%%%%%%%%%%%%%%%%%%%%%%%%%%%%%%%%%%%
%\subsection{Terminating \textit{3}-parameter symmetric 
%$q^{-1}$ interchange transformations}
%%%%%%%%%%%%%%%%%%%%%%%%%%%%%%%%%%%%%%%%%%%%%
\begin{cor} Let $n\in\mathbb N_0$, $a,c,d,e\in\CCast$, 
$q\in\CCdag$.
%such that $|q|\ne 1$. 
Then, one has the following parameter interchange transformations for a 
terminating ${}_7W_6^{-1}$:
\begin{eqnarray}
\label{cor:4.13-2:r1} &&
\hspace{-1cm}
\Wphyp{7}{6}{-1}{\frac{q^{-n}c}{d}}
{q^{-n},\frac{q^{-n}c}{a},\frac{qa}{de},c}
{q,\frac{q^ne}{c}}
\\[-0.05cm]
\label{cor:4.13-2:r2} &&\hspace{1cm}=
\frac{
(\frac{qa}{cd},\frac{qa}{e},\frac{d}{c},e;q)_n
}{
(\frac{qa}{ce},\frac{qa}{d},\frac{e}{c},d;q)_n
}
\Wphyp{7}{6}{-1}{\frac{q^{-n}c}{e}}
{q^{-n},\frac{q^{-n}c}{a},\frac{qa}{ed},c}
{q,\frac{q^nd}{c}}
\\[-0.05cm]
\label{cor:4.13-2:r3} &&\hspace{1cm}=
\frac{
(\frac{qa}{de},\frac{qa}{c},\frac{d}{c},c;q)_n
}{
(\frac{qa}{ce},\frac{qa}{d},\frac{c}{d},d;q)_n
}
\Wphyp{7}{6}{-1}{\frac{q^{-n}d}{c}}
{q^{-n},\frac{q^{-n}d}{a},\frac{qa}{ce},d}
{q,\frac{q^ne}{d}}
\\[-0.05cm]
\label{cor:4.13-2:r4} &&\hspace{1cm}=
\frac{
(\frac{qa}{cd},\frac{qa}{e},\frac{d}{c},e;q)_n
}{
(\frac{qa}{ce},\frac{qa}{d},\frac{e}{d},d;q)_n
}
\Wphyp{7}{6}{-1}{\frac{q^{-n}d}{e}}
{q^{-n},\frac{q^{-n}d}{a},\frac{qa}{ec},d}
{q,\frac{q^nc}{d}}
\\[-0.05cm]
\label{cor:4.13-2:r5} &&\hspace{1cm}=
\frac{
(\frac{qa}{de},\frac{qa}{c},\frac{d}{c},c;q)_n
}{
(\frac{qa}{ce},\frac{qa}{d},\frac{c}{e},d;q)_n
}
\Wphyp{7}{6}{-1}{\frac{q^{-n}e}{c} }
{q^{-n},\frac{q^{-n}e}{a} ,\frac{qa}{cd},e}
{q,\frac{q^nd}{e}}\\[-0.05cm]
\label{cor:4.13-2:r6} &&\hspace{1cm}=
\frac{(\frac{d}{c};q)_n}{(\frac{d}{e};q)_n}
\Wphyp{7}{6}{-1}{\frac{q^{-n}e}{d}}
{q^{-n},\frac{q^{-n}e}{a},\frac{qa}{dc},e}{q,\frac{q^nc}{e}}.
\end{eqnarray}
\label{cor:4.12}
\end{cor}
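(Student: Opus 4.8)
The plan is to treat Corollary~\ref{cor:4.12} exactly as Corollaries~\ref{cor:4.4}, \ref{cor:4.5} and \ref{cor:4.9} were treated: start from the single identity \eqref{cor:4.13-2}, whose left-hand side is symmetric in the three parameters $c,d,e$, and let the symmetric group on $\{c,d,e\}$ act.

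First I would record the symmetry that makes this work. Writing $\Wphyp{7}{6}{1}{a}{q^{-n},c,d,e}{q,\frac{q^{n+2}a^2}{cde}}$ out via \eqref{rpWr}, \eqref{topWzero} and \eqref{botzero}, the parameters $c,d,e$ enter the $k$-th summand only through the product $\prod_{u\in\{c,d,e\}}\dfrac{(u;q)_k}{u^k(\frac{qa}{u};q)_k}$ together with a factor independent of $c,d,e$; hence the series is invariant under every permutation of $c,d,e$. (Equivalently, up to the normalization ${\sf A}_n(a,c,d,e|q)$ used in the proof of Corollary~\ref{cor:4.13}, this quantity is a continuous dual $q^{-1}$-Hahn polynomial, which is symmetric in its three free parameters.)

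Next, apply each permutation $\sigma$ of $\{c,d,e\}$ to \eqref{cor:4.13-2}. Since its left-hand side is $\sigma$-invariant, equating the $\sigma$-image of its right-hand side with the $\sigma=\mathrm{id}$ case gives
\begin{align*}
&\Wphyp{7}{6}{-1}{\tfrac{q^{-n}c}{d}}{q^{-n},\tfrac{q^{-n}c}{a},\tfrac{qa}{de},c}{q,\tfrac{q^ne}{c}}\\
&\quad=\frac{\bigl(qa,\sigma d,\tfrac{qa}{\sigma c\,\sigma e};q\bigr)_n\bigl(\tfrac{qa}{c},\tfrac{qa}{e},\tfrac{d}{c};q\bigr)_n}{\bigl(qa,d,\tfrac{qa}{ce};q\bigr)_n\bigl(\tfrac{qa}{\sigma c},\tfrac{qa}{\sigma e},\tfrac{\sigma d}{\sigma c};q\bigr)_n}\,
\Wphyp{7}{6}{-1}{\tfrac{q^{-n}\sigma c}{\sigma d}}{q^{-n},\tfrac{q^{-n}\sigma c}{a},\tfrac{qa}{\sigma d\,\sigma e},\sigma c}{q,\tfrac{q^n\sigma e}{\sigma c}}.
\end{align*}
Cancelling the common $(qa;q)_n$, and $(\tfrac{qa}{c};q)_n$ when $\sigma$ fixes $c$, and collecting the surviving $q$-shifted factorials, the five non-identity permutations produce \eqref{cor:4.13-2:r2}--\eqref{cor:4.13-2:r6}: the transposition $(d\,e)$ gives \eqref{cor:4.13-2:r2}, the transposition $(c\,d)$ gives \eqref{cor:4.13-2:r3}, the $3$-cycle $(c\,d\,e)$ gives \eqref{cor:4.13-2:r4}, the $3$-cycle $(c\,e\,d)$ gives \eqref{cor:4.13-2:r5}, and the transposition $(c\,e)$ gives \eqref{cor:4.13-2:r6}, whose prefactor collapses to the clean ratio $(\tfrac{d}{c};q)_n/(\tfrac{d}{e};q)_n$.

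I expect the only real obstacle to be bookkeeping: verifying that each of the six $q$-Pochhammer prefactor ratios simplifies to the displayed form, confirming that the six permutations give genuinely distinct non-trivial transformations (nothing collapses), and carefully tracking how the argument $\tfrac{q^n e}{c}$ and the asymmetric factor $\tfrac{d}{c}$ transform under the three permutations that move $c$. None of this is conceptually difficult, but it must be carried out parameter by parameter.
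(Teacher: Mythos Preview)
Your proposal is correct and is exactly the paper's approach: the paper's proof is the single sentence ``Start with \eqref{cor:4.13-2} and consider all permutations of the symmetric parameters $c,d,e$ which produce non-trivial transformations.'' You have simply made explicit the symmetry of the ${}_7W_6^{1}$ in $c,d,e$, identified which permutation yields which line, and flagged the prefactor simplifications---all of which the paper leaves implicit.
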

\begin{proof}
\noindent
Start with \eqref{cor:4.13-2} and consider all permutations of the symmetric 
parameters $c,d,e$ which produce non-trivial transformations.
\end{proof}

\begin{cor} Let $n\in\mathbb N_0$, $a,c,d,e\in\CCast$, 
$q\in\CCdag$.
%such that $|q|\ne 1$. 
Then, one has the following parameter interchange transformations for a 
terminating ${}_3\phi_2$:
\label{cor4.12}
\begin{eqnarray}
\label{cor:4.13-3:r1}
&&\hspace{-4cm}
\qhyp{3}{2}{q^{-n},\frac{qa}{cd},e}{\frac{qa}{c},\frac{qa}{d}}{q,\frac{q^{n+1}a}{e}}
=
\frac{(\frac{qa}{e};q)_n}
{(\frac{qa}{d};q)_n}
\!\qhyp{3}{2}{q^{-n},\frac{qa}{ce},d}{\frac{qa}{c},\frac{qa}{e}}{q,\frac{q^{n+1}a}{d}}
\\[-0.05cm]\label{cor:4.13-3:r4}&&
\hspace{0.5cm}=
\frac{(\frac{qa}{e};q)_n}
{(\frac{qa}{c};q)_n}
\!\qhyp{3}{2}{q^{-n},\frac{qa}{de},c}{\frac{qa}{d},\frac{qa}{e}}{q,\frac{q^{n+1}a}{c}}.
\end{eqnarray}
\end{cor}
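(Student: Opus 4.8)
The plan is to deduce both identities from \eqref{cor:4.13-3} by permuting the parameters $c,d,e$, the same device used to prove Corollaries \ref{cor:4.5} and \ref{cor:4.9}. The key observation is that the terminating ${}_7W_6^1$ on the left of \eqref{cor:4.13-3}, namely $\Wphyp{7}{6}{1}{a}{q^{-n},c,d,e}{q,\frac{q^{n+2}a^2}{cde}}$, is symmetric in $c,d,e$: by the definitions \eqref{topWzero} and \eqref{botzero} it is the ${}_7\phi_7$ with numerator parameters $q^{-n},\pm q\sqrt a,a,c,d,e$ and denominator parameters $\pm\sqrt a,q^{n+1}a,\frac{qa}{c},\frac{qa}{d},\frac{qa}{e},0$, so transposing two of $c,d,e$ merely permutes a pair of numerator entries together with the matching pair of denominator entries, while the argument $\frac{q^{n+2}a^2}{cde}$ is manifestly invariant.

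Concretely, I would first write \eqref{cor:4.13-3} in the form
\[
\Wphyp{7}{6}{1}{a}{q^{-n},c,d,e}{q,\frac{q^{n+2}a^2}{cde}}
=\frac{(qa;q)_n}{\left(\frac{qa}{e};q\right)_n}\,
\qhyp{3}{2}{q^{-n},\frac{qa}{cd},e}{\frac{qa}{c},\frac{qa}{d}}{q,\frac{q^{n+1}a}{e}},
\]
and then apply the transposition $d\leftrightarrow e$, and separately $c\leftrightarrow e$, to this identity. Each leaves the left-hand side unchanged and produces the companion representations
\[
\frac{(qa;q)_n}{\left(\frac{qa}{d};q\right)_n}\,
\qhyp{3}{2}{q^{-n},\frac{qa}{ce},d}{\frac{qa}{c},\frac{qa}{e}}{q,\frac{q^{n+1}a}{d}},
\qquad
\frac{(qa;q)_n}{\left(\frac{qa}{c};q\right)_n}\,
\qhyp{3}{2}{q^{-n},\frac{qa}{de},c}{\frac{qa}{d},\frac{qa}{e}}{q,\frac{q^{n+1}a}{c}}
\]
for that same left-hand side. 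Equating the three expressions and multiplying through by the factor $(qa/e;q)_n/(qa;q)_n$ then gives \eqref{cor:4.13-3:r1} and \eqref{cor:4.13-3:r4} verbatim.

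I do not expect a genuine obstacle here; the argument is entirely routine. The only two points that deserve a line of verification are: (i) the $c,d,e$-symmetry of the ${}_7W_6^1$, which is immediate from the definitions together with the invariance of a basic hypergeometric series under permutations of its numerator, and of its denominator, parameters; and (ii) the bookkeeping that the remaining permutations of $\{c,d,e\}$ contribute nothing new, since the transposition $c\leftrightarrow d$ fixes every ${}_3\phi_2$ above (because $cd=dc$ and $\{\tfrac{qa}{c},\tfrac{qa}{d}\}$ is unordered) while the two $3$-cycles merely reproduce the forms already listed, so that the six permutations collapse to exactly the three ${}_3\phi_2$'s appearing in the statement.
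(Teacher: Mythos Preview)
Your proposal is correct and follows essentially the same approach as the paper's own proof, which simply states ``Start with \eqref{cor:4.13-3} and consider all permutations of the symmetric parameters $c,d,e$ which produce non-trivial transformations.'' You have merely spelled out in detail the symmetry verification and the bookkeeping about which permutations yield new identities, all of which is exactly what the paper's one-line proof intends.
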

\begin{proof}
\noindent
Start with \eqref{cor:4.13-3} and consider all permutations of the 
symmetric parameters $c,d,e$ which produce non-trivial transformations.
\end{proof}

\begin{rem}
Another set of parameter interchange transformations
can be obtained by considering all permutations
of the symmetric parameters $c,d,e$ in
\eqref{cor:4.13-4}
and
\eqref{cor:4.13-8}
respectively. However, one can see that
these are equivalent to the above Corollary 
\ref{cor4.12} by replacing
\begin{equation}
\hspace{-0.0cm}(a,c,d,e)\mapsto\left(\frac{q^{-2n}}{a},\frac{q^{-n}c}{a},
\frac{q^{-n}d}{a},\frac{q^{-n}e}{a}\right),
\end{equation}
\begin{equation}\hspace{-0.0cm}
(a,c,d,e)\mapsto \left(\frac{q^{1-n}cde}{a},c,d,e\right),
\end{equation}
respectively.
\end{rem}

\begin{cor}
\label{cor4.14}Let $n\in\mathbb N_0$, $a,c,d,e\in\CCast$, 
$q\in\CCdag$.
%such that $|q|\ne 1$. 
Then, one has the following parameter interchange transformations 
for a terminating ${}_3\phi_2$:
\begin{eqnarray}
\label{cor:4.13-5:r1}
&&\hspace{-0.5cm}
\qhyp{3}{2}{q^{-n},c,d}{\frac{q^{-n}cd}{a},\frac{qa}{e}}{q,q}
=
\frac
{
(\frac{qa}{ce},\frac{qa}{d};q)_n
}{
(\frac{qa}{cd},\frac{qa}{e};q)_n
}
\qhyp{3}{2}{q^{-n},c,e}{\frac{q^{-n}ce}{a},\frac{qa}{d}}{q,q}
=
\frac
{
(\frac{qa}{de},\frac{qa}{c};q)_n
}{
(\frac{qa}{cd},\frac{qa}{e};q)_n
}
\qhyp{3}{2}{q^{-n},d,e}{\frac{q^{-n}de}{a},\frac{qa}{c}}{q,q}.
\end{eqnarray}
\end{cor}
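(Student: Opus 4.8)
The plan is to exploit the fact that the terminating ${}_7W_6^{1}$ sitting on the left of every identity in Corollary~\ref{cor:4.13}, namely $\Wphyp{7}{6}{1}{a}{q^{-n},c,d,e}{q,\frac{q^{n+2}a^2}{cde}}$, is invariant under all permutations of $c,d,e$. Indeed, unwinding the definition \eqref{topWzero}, the denominator parameters of this series form the multiset $\{\pm\sqrt a,\,q^{n+1}a,\,\tfrac{qa}{c},\,\tfrac{qa}{d},\,\tfrac{qa}{e}\}$, and the argument $q^{n+2}a^2/(cde)$ is a symmetric function of $c,d,e$; hence any permutation of $c,d,e$ leaves the series unchanged. (Equivalently, up to the explicit normalization recorded in Corollary~\ref{cor:4.11}, this ${}_7W_6^{1}$ is a continuous dual $q^{-1}$-Hahn polynomial, which is symmetric in its three free parameters by construction.)

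Concretely, I would start from \eqref{cor:4.13-5}, which reads
\[
\Wphyp{7}{6}{1}{a}{q^{-n},c,d,e}{q,\frac{q^{n+2}a^2}{cde}}
=\frac{\left(qa,\frac{qa}{cd};q\right)_n}{(\frac{qa}{c},\frac{qa}{d};q)_n}\,
\qhyp{3}{2}{q^{-n},c,d}{\frac{q^{-n}cd}{a},\frac{qa}{e}}{q,q}.
\]
Applying the transposition $d\leftrightarrow e$ to the right-hand side gives a second expression for the very same ${}_7W_6^{1}$; equating the two right-hand sides and cancelling the common prefactor $(qa,\frac{qa}{cd};q)_n/(\frac{qa}{c},\frac{qa}{d};q)_n$ yields the first equality of the corollary. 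Applying instead the cyclic permutation $(c,d,e)\mapsto(d,e,c)$ and simplifying the resulting quotient of $q$-Pochhammer symbols produces the second equality. Any further permutation of $c,d,e$ gives only a transformation trivially equivalent to one of these two, so these exhaust the non-trivial cases.

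The only thing to check beyond routine bookkeeping is that the prefactor ratios collapse to the stated coefficients $(\frac{qa}{ce},\frac{qa}{d};q)_n/(\frac{qa}{cd},\frac{qa}{e};q)_n$ and $(\frac{qa}{de},\frac{qa}{c};q)_n/(\frac{qa}{cd},\frac{qa}{e};q)_n$; this is immediate, since the factors $(qa;q)_n$ cancel in pairs and every remaining $q$-Pochhammer factor matches directly. I do not anticipate any genuine obstacle: the argument is entirely parallel to the proofs of the preceding interchange corollaries in this section (e.g.\ Corollaries~\ref{cor:4.5} and~\ref{cor:4.9}), with \eqref{cor:4.13-5} and the $c,d,e$-symmetry of the ${}_7W_6^{1}$ playing the roles that \eqref{cor4.3:r4a}, respectively \eqref{cor4.3:r7}, and the $c,d,e$-symmetry of the ${}_7W_6^{-1}$ played there.
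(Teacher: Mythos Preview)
Your proposal is correct and follows essentially the same approach as the paper: start from \eqref{cor:4.13-5} and exploit the $c,d,e$-symmetry of the ${}_7W_6^{1}$ on the left-hand side, considering the permutations that yield non-trivial transformations. You have simply spelled out the prefactor bookkeeping and the reason for the symmetry in more detail than the paper does.
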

\begin{proof}
\noindent
Start with \eqref{cor:4.13-5} and consider all permutations of 
the symmetric parameters $c,d,e$ which produce non-trivial transformations.
\end{proof}

\begin{rem}
Another set of parameter interchange transformations
can be obtained by considering all permutations
of the symmetric parameters $c,d,e$ in
\eqref{cor:4.13-6} and \eqref{cor:4.13-7}
respectively. However, one can see that
these are equivalent to the above Corollary 
\ref{cor4.14} by replacing
\[
(a,c,d,e)\mapsto \left(\frac{q^{-2n}}{a},\frac{q^{-n}c}{a},\frac{q^{-n}d}{a},
\frac{q^{-n}e}{a}\right),
\]
\[
(a,c,d,e)\mapsto\left(\frac{q^{1-n}a}{cde},\frac{qa}{cd},
\frac{qa}{ce},\frac{qa}{de}\right),
\]
respectively.
\end{rem}

%%%%%%%%%%%%%%%%%%%%%%%%%%%%%%%%%%%%%%%%%%%%%%%%%%%%%%%%%%%%%%%%%%%%%%%%%%%%%%%%%%%
\section{The \textit{q} and \textit{q}\textsuperscript{-1}
\!\!-Al-Salam--Chihara polynomials}
\label{alsalamchihara}
%%%%%%%%%%%%%%%%%%%%%%%%%%%%%%%%%%%%%%%%%%%%%%%%%%%%%%%%%%%%%%%%%%%%%%%%%%%%%%%%%%%

Define the set ${\bf 2}:=\{1,2\}$, and the multiset ${\bf a}:=\{a_1,a_2\}$, $a_k\in\CCast$, 
$k\in{\bf 2}$, and $x=\frac12(z+z^{-1})$,
$q\in\CCdag$.
The Al-Salam--Chihara polynomials $Q_n(x;{\bf a}|q)$ are a family of polynomials
symmetric in two free parameters. These can be obtained from the Askey--Wilson 
polynomials by taking two of its free parameters to be zero, or from the 
continuous dual $q$-Hahn polynomials by taking one of its free parameters 
to be zero. Hence, the Al-Salam--Chihara polynomials are a symmetric subfamily 
of the Askey--Wilson polynomials. 
%%%%%%%%%%%%%%%%%%%%%%%%%%%%%%%%%%%%%%%%%%%%%%%%%%%%%%%%%%%%%%%%%%%%%%%%%%%%%%%%%%%
\subsection{Terminating $q$ and $q^{-1}$-Al-Salam--Chihara 
polynomial representations}
%%%%%%%%%%%%%%%%%%%%%%%%%%%%%%%%%%%%%%%%%%%%%%%%%%%%%%%%%%%%%%%%%%%%%%%%%%%%%%%%%%%

\noindent One has the following complete list of terminating ${}_6W_5^{\pm 2}$ and equivalent terminating basic hypergeometric representations for the Al-Salam--Chihara polynomials. 

\begin{cor} \label{cor:5.1}
Let $n\in\mathbb N_0$, 
$q\in\CCdag$, 
$p,r,s\in{\bf 2}$, $p,r$ distinct and fixed.
Then, the Al-Salam--Chihara polynomials are given by:
\begin{eqnarray}
\label{ASC:def1} 
&&\hspace{-4.3cm}Q_n(x;{\bf a}|q):=a_p^{-n}(a_{12};q)_n
\qphyp{3}{1}{1}{q^{-n}, a_pz^{\pm}}{a_{12}}{q,q}\\
\label{ASC:def2} &&\hspace{-2.4cm}= q^{-\binom{n}{2}} 
(-a_p)^{-n} (a_pz^{\pm};q)_n\qhyp22{q^{-n},\frac{q^{1-n}}{a_{12}}}
{\frac{q^{1-n}z^{\pm}}{a_p}}{q,\frac{qa_{12}}{a_p^2}} \\
\label{ASC:def5} &&\hspace{-2.4cm}= z^n (a_{12};q)_n \qhyp31{q^{-n}, 
\{a_sz\}}{a_{12}}{q,\frac{q^n}{z^2}} %\\
\end{eqnarray}
\begin{eqnarray}
\label{ASC:def4} &&\hspace{-2.15cm}=z^{-n}(a_p z;q)_n 
\qhyp{2}{1}{q^{-n},\frac{a_r}{z}}
{\frac{q^{1-n}}{a_pz}}{q,\frac{qz}{a_p}}\\
\label{ASC:def3} &&\hspace{-2.15cm}=z^{-n}(\{a_sz\};q)_n
\qphyp{2}{2}{-1}{q^{-n},\frac{q^{1-n}}{a_{12}}}
{\left\{\frac{q^{1-n}}{a_sz}\right\}}{q,q}\\
\label{ASC:def7} &&\hspace{-2.15cm} =a_p^{-n}\dfrac{\left(a_rz^{\pm};q\right)_n}
{\left(\frac{a_r}{a_p};q\right)_n} 
\Wphyp{6}{5}{-2}{q^{-n}z^2}
{q^{-n},\{a_sz\}}
{q,\frac{q^n}{a_{12}z^2}}\\
\label{ASC:def6} &&\hspace{-2.15cm} 
=a_p^{-n} 
\frac{(a_rz^\pm;q)_n}{(a_r/a_p;q)_n}
\Wphyp{6}{5}{2}{\frac{q^{-n}a_p}{a_r}}
{q^{-n}, a_p z^{\pm}}
{q,\frac{q^{2-n}}{a^2_{r}}}
.
\end{eqnarray}
\end{cor}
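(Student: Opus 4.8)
The plan is to derive each of the listed representations of the Al-Salam--Chihara polynomials $Q_n(x;{\bf a}|q)$ by specializing the corresponding representation of the continuous dual $q$-Hahn polynomials from Corollary \ref{cor:4.1}, taking the limit as one of the three free parameters goes to zero. Concretely, since the Al-Salam--Chihara polynomials arise from the continuous dual $q$-Hahn polynomials by sending the remaining ``extra'' parameter $a_k\to 0$ (for an appropriate choice of $k\notin\{p,r\}$), I would go line by line: for \eqref{ASC:def1} start from \eqref{cdqH:def1} and send the non-$\{p,s\}$ parameter to zero, noting that a factor $(a_{ps'};q)_n\to(0;q)_n=1$ disappears from the prefactor while the corresponding numerator parameter $a_{p}z^{\pm}$ is untouched and one denominator parameter $a_{ps'}\to 0$ becomes a ``bottom zero'' entry, which by the van de Bult--Rains convention \eqref{botzero} turns a ${}_3\phi_2$ into a ${}_3\phi_1^{1}$; for \eqref{ASC:def2} start from \eqref{cdqH:def2}; for \eqref{ASC:def5} from \eqref{cdqH:def3} or \eqref{cdqH:def8}; for \eqref{ASC:def4} from \eqref{cdqH:def3} or \eqref{cdqH:def4}; for \eqref{ASC:def3} from \eqref{cdqH:def4}; and for the two ${}_6W_5^{\pm2}$ forms \eqref{ASC:def7}, \eqref{ASC:def6} from the ${}_7W_6^{\pm1}$ representations \eqref{cdqH:def5}, \eqref{cdqH:def6} (or \eqref{cdqH:def8}), again using the extension conventions \eqref{topWzero} for very-well-poised series with zero entries together with the limit transitions of Lemma \ref{leminftyWhyp}.

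The key technical tool throughout is the pair of limit formulas recorded just after \eqref{poch.id:3}, namely $\lim_{a\to 0}a^n(x/a;q)_n=q^{\binom n2}(-x)^n$ and its reciprocal-variable twin, together with \eqref{lambdamu}; these are what control the behavior of the prefactors when a parameter is scaled to zero or to infinity. In each case I would carefully track (i) which prefactor $q$-Pochhammer symbols survive, simplify, or vanish, (ii) which series parameters become $0$ and thus get absorbed into the $\phi^{\pm p}$ / $W^{\pm p}$ superscript via \eqref{topzero}--\eqref{botzero} and \eqref{topWzero}, and (iii) the resulting change in the argument $z$ of the series, using Lemma \ref{leminftyWhyp} for the very-well-poised cases. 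The balancing/well-poised bookkeeping must also be checked: e.g. one verifies that the exponent $q^n/z^2$ appearing in \eqref{ASC:def5} is the correct limit of $q/(a_t z)$ after rescaling, and that the $p$-shifts in the $W$ superscripts match (a ${}_7W_6^{-1}$ losing a parameter to zero becomes ${}_6W_5^{-2}$, etc.).

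The main obstacle I anticipate is not any single limit but the sheer amount of case-by-case verification, especially for the very-well-poised entries \eqref{ASC:def7} and \eqref{ASC:def6}: there one must simultaneously handle the pairing structure of the very-well-poised parameters (the $\pm q\sqrt a,\pm\sqrt a$ entries and the $qa/a_k$ denominators), the correct assignment of which of $a_1,a_2,a_3$ plays the role of the vanishing parameter, the relabeling of indices so that $k\in{\bf 2}$, and the precise normalizing power of $q$ and $z$ that makes the two sides agree. A secondary subtlety is ensuring the ``$p,r$ distinct and fixed'' hypothesis is used consistently so that the choice of which parameter tends to zero is compatible across all eight displays — i.e. that we are genuinely getting the \emph{same} polynomial $Q_n(x;{\bf a}|q)$ each time. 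Once the correct specialization is identified in each line, the remaining computation is the kind of routine $q$-Pochhammer manipulation already exemplified in the proof of Corollary \ref{cor:4.1}, so I would simply indicate the source representation and the parameter sent to zero, leaving the elementary simplifications to the reader.
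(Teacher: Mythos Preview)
Your proposal is correct and follows essentially the same route as the paper: each representation of $Q_n(x;{\bf a}|q)$ is obtained by letting the appropriate third parameter tend to zero in the corresponding continuous dual $q$-Hahn representation from Corollary \ref{cor:4.1}, with the van de Bult--Rains conventions \eqref{topzero}--\eqref{topWzero} absorbing the resulting zero entries. The paper's proof is just the terse version of what you describe, listing for each of \eqref{ASC:def1}--\eqref{ASC:def6} the source equation in Corollary \ref{cor:4.1} and which $a_k$ is sent to $0$; the only minor discrepancy is the pairing of the very-well-poised cases, where the paper derives \eqref{ASC:def7} from \eqref{cdqH:def6} and \eqref{ASC:def6} from \eqref{cdqH:def5}, rather than from \eqref{cdqH:def5} and \eqref{cdqH:def8} as you suggest.
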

\begin{proof}
\eqref{ASC:def1} is derived by taking \eqref{cdqH:def1} and mapping $a_k \mapsto 0$, $k\ne p$ 
(see also \cite[(14.8.1)]{Koekoeketal}),
\eqref{ASC:def2} is derived 
by taking \eqref{cdqH:def2} and mapping 
$a_k \mapsto 0$, $k\ne p$,
\eqref{ASC:def5} is derived by taking \eqref{cdqH:def3} and mapping $a_t \mapsto 0$,
\eqref{ASC:def4} is derived by taking \eqref{cdqH:def3} or \eqref{cdqH:def4} and 
mapping $a_r \mapsto 0$ (see also \cite[(14.8.1)]{Koekoeketal}),
\eqref{ASC:def3} is derived by taking \eqref{cdqH:def4} and mapping $a_t \mapsto 0$, 
\eqref{ASC:def7} is derived by taking \eqref{cdqH:def6} and mapping $a_t \mapsto 0$
and 
\eqref{ASC:def6} is derived by taking \eqref{cdqH:def5} and mapping $a_k \mapsto 0$, $k\in{\bf 3}$.
Also, whenever necessary the indices
$k$ in $a_k$ should be relabeled such that
$k\in{\bf 2}$.
\end{proof}

\begin{rem} \label{rem:5.2}
It is interesting to note that for Al-Salam--Chihara polynomials (and also for $q^{-1}$-Al-Salam--Chihara polynomials, see Theorem \ref{cor:5.9} below), there are three 
separate representations of these polynomials where the symmetry in the parameters 
is evident:~\eqref{ASC:def3}, \eqref{ASC:def5}, \eqref{ASC:def6}. 
This is unlike the situation for the Askey--Wilson \cite[(19)]{CohlCostasSantos20b} and continuous dual 
$q$-Hahn polynomials \eqref{cdqH:def5}, where there are only one such representation each.
\end{rem} 

%%%%%%%%%%%%%%%%%%%%%%%%%%%%%%%%%%%%%%%%%%%%%%%%%%%%%%%%%%%%%%%%%%%%%%%%%%%%%%%%%%%
%\subsection{Terminating $q^{-1}$-Al-Salam--Chihara 
%polynomial representations}
%%%%%%%%%%%%%%%%%%%%%%%%%%%%%%%%%%%%%%%%%%%%%%%%%%%%%%%%%%%%%%%%%%%%%%%%%%%%%%%%%%%

\noindent One has the following complete list of terminating ${}_6W_5^{\pm 2}$ and equivalent terminating basic hypergeometric representations for the $q^{-1}$-Al-Salam--Chihara polynomials.

\begin{cor} \label{cor:5.9}
Let $Q_n(x;{\bf a}|q)$ and the respective parameters be defined as previously. 
Then, the $q^{-1}$-Al-Salam--Chihara polynomials are given by:
\begin{eqnarray}
\label{qiASC:1}&&\hspace{-1.9cm}Q_n(x;{\bf a}|q^{-1})=
q^{-\binom{n}{2}}
\left(\frac{-a_{12}}{a_p}\right)^n 
\left(\frac{1}{a_{12}}
;q\right)_n \qhyp31{q^{-n}, 
\frac{z^{\pm}}{a_p}
}{
\frac{1}{a_{12}}
}
{q,\frac{q^na_p^2}{a_{12}}} \\ \label{qiASC:2} &&\hspace{0.27cm}
=q^{-\binom{n}{2}}(-a_p)^{n} \left(\frac{z^{\pm}}{a_p};q\right)_n 
\qphyp{2}{2}{-1}{q^{-n},q^{1-n}a_{12}}
{q^{1-n}a_pz^{\pm}}{q,q} \\
\label{qiASC:5} &&\hspace{0.27cm}=q^{-\binom{n}{2}}(-a_{12}
z)^n \left(\frac{1}{a_{12}};q\right)_n 
\qphyp{3}{1}{1}{q^{-n},\{\frac{1}{a_sz}\}}
{\frac{1}{a_{12}}}{q,q}\\
\label{qiASC:3} &&\hspace{0.27cm}=
q^{-\genfrac{(}{)}{0pt}{}{n}{2}} (-a_p)^n 
\bigg( \frac{1}{a_pz};q\bigg)_n \qhyp{2}{1}{q^{-n}, 
\frac{z}{a_r}}
{q^{1-n}a_pz}{q, qa_rz} \\
 \label{qiASC:4} &&\hspace{0.27cm}=
q^{-2\binom{n}{2}}(a_{12}z)^n
%z^n 
\bigg(\left\{\frac{1}{a_sz}\right\};q\bigg)_n
\qhyp22{q^{-n}, q^{1-n}a_{12}}{\{q^{1-n}a_sz\}}
{q,qz^{2}} \\ 
&&\hspace{0.27cm}
=q^{-{\binom{n}{2}}}
(-a_r)^{n}
\frac{(\frac{z^{\pm}}{a_r};q)_n}
{(\frac{a_p}{a_r};q)_n}
\label{qiASC:7HSC} 
\Wphyp{6}{5}{-2}{\frac{q^{-n}a_r}{a_p}}
{q^{-n},\frac{z^{\pm}}{a_p}}
{q,q^na_p^2}\\
%%%%%
&&\hspace{0.27cm}=q^{-\genfrac{(}{)}{0pt}{}{n}{2}}(-a_{12}z)^n 
\frac{(\{\frac{z}{a_s}\};q)_n}{(z^{2};q)_n} 
%\nonumber \\ 
\label{qiASC:6} 
%\\&&\hspace{2cm}
%\times
\Wphyp{6}{5}{2}{\frac{q^{-n}}{z^2}}
{q^{-n},\left\{\frac{1}{a_sz}\right\}}
{q,\frac{q^{2-n}a_{12}}{z^2}}.
\end{eqnarray}
\end{cor}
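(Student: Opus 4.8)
The plan is to obtain each of \eqref{qiASC:1}--\eqref{qiASC:6} directly from the corresponding representation in Corollary~\ref{cor:5.1}, in exactly the way Corollary~\ref{cor:4.11} was obtained from Corollary~\ref{cor:4.1}. Since the $q^{-1}$-Al-Salam--Chihara polynomial $Q_n(x;{\bf a}|q^{-1})$ is by definition the result of the substitution $q\mapsto q^{-1}$ in $Q_n(x;{\bf a}|q)$, I would take each of the seven right-hand sides of Corollary~\ref{cor:5.1}, perform this substitution, and then re-express the resulting $q^{-1}$-shifted factorials and $q^{-1}$-basic hypergeometric series back in terms of $q$ using \eqref{poch.id:3} together with the zero-entry conventions \eqref{topzero}, \eqref{botzero}, \eqref{topWzero}. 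The matching is one-to-one: \eqref{qiASC:1} with \eqref{ASC:def1}, \eqref{qiASC:2} with \eqref{ASC:def2}, \eqref{qiASC:5} with \eqref{ASC:def5}, \eqref{qiASC:3} with \eqref{ASC:def4}, \eqref{qiASC:4} with \eqref{ASC:def3}, \eqref{qiASC:7HSC} with \eqref{ASC:def7}, and \eqref{qiASC:6} with \eqref{ASC:def6}.

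Carrying this out, the steps for a given representation are as follows. First, replace $q$ by $q^{-1}$, which sends $q^{-n}\mapsto q^{n}$ in the top parameters, leaves the polynomial prefactors such as $a_p^{-n}$ and $z^{\pm n}$ untouched, and turns every $q$-Pochhammer symbol $(\,\cdot\,;q)_n$ into $(\,\cdot\,;q^{-1})_n$. Second, apply \eqref{poch.id:3} in the form $(a;q^{-1})_n=(a^{-1};q)_n(-a)^nq^{-\binom{n}{2}}$ to each $q^{-1}$-Pochhammer in the normalization; this produces precisely the extra factors $q^{-\binom{n}{2}}$, $(-1)^n$ and powers of $a_{12}$ and $z$ visible on the right-hand sides of \eqref{qiASC:1}--\eqref{qiASC:6}. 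Third, expand the $q^{-1}$-series termwise via \eqref{2.12} (and \eqref{topzero}, \eqref{botzero}, or \eqref{topWzero} and \eqref{eq:2.13} in the very-well-poised cases), apply \eqref{poch.id:3} to every $q^{-1}$-Pochhammer in the numerator and denominator of the $k$-th term, and observe that the collected $q^{\pm\binom{k}{2}}$ exponents recombine with the balancing factor $\bigl((-1)^kq^{\binom{k}{2}}\bigr)^{1+s-r}$ to yield a genuine $q$-series whose argument is the one stated; a zero numerator or denominator entry is invariant under $q\mapsto q^{-1}$, so the superscripts in ${}_3\phi_1^1$, ${}_2\phi_2^{-1}$ and ${}_6W_5^{\pm2}$ are preserved. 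Finally, collect all accumulated scalar factors, and note that the nonvanishing conditions on the parameters transfer immediately.

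The only real work is the bookkeeping in the third step: tracking the powers of $q$ and of $(-1)$ generated by repeated use of \eqref{poch.id:3}, both in the overall constant and inside the argument of the series, and confirming that after conversion the balancing exponent multiplicity $1+s-r$ of the target $q$-series comes out correctly. The genuinely delicate cases are the very-well-poised ones \eqref{qiASC:7HSC} and \eqref{qiASC:6}: after $q\mapsto q^{-1}$ one has a very-well-poised series in $q^{-1}$ whose base and numerator entries $\pm q^{-1}\sqrt{a}$ must be shown to reproduce the ${}_6W_5^{\pm2}$ in $q$ on the right-hand side; equivalently one may invoke the $q\leftrightarrow q^{-1}$ connection \eqref{qtopiden} (extended to the superscript-$p$ notation), or Corollary~\ref{cor:2.5}, to perform this last rewriting in one step. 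As a cross-check, the same seven identities also follow by taking the appropriate $a_k\to\infty$ limits, with the rescaling dictated by Remark~\ref{invcdqirem}, in Corollary~\ref{cor:4.11} --- mirroring the passage from Corollary~\ref{cor:4.1} to Corollary~\ref{cor:5.1}.
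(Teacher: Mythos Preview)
Your proposal is correct and follows exactly the paper's own approach: apply the map $q\mapsto q^{-1}$ to each representation in Corollary~\ref{cor:5.1} and rewrite using \eqref{poch.id:3}. The paper's proof is the one-line version of what you have spelled out in detail.
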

\begin{proof}
Each $q^{-1}$ representation is derived from the corresponding 
representation by applying the map $q\mapsto 1/q$ and using \eqref{poch.id:3}.
\end{proof}

%%%%%%%%%%%%%%%%%%%%%%%%%%%%%%%%%%%%%%%%%%%%%%%%%%%%%%%%%%%%%%%%%%%%%%%%%%%%%%%%%%%
\subsection{Terminating \textit{2}-parameter $q$ and $q^{-1}$ symmetric transformations}
%%%%%%%%%%%%%%%%%%%%%%%%%%%%%%%%%%%%%%%%%%%%%%%%%%%%%%%%%%%%%%%%%%%%%%%%%%%%%%%%%%%

\noindent By studying the interrelations of the terminating basic hypergeometric functions in Corollary \ref{cor:5.1} we obtain the following transformation result for a terminating ${}_6W_5^{-2}$.

\begin{cor}
\label{cor:5.8}
Let $n\in\mathbb N_0$, $a,c,d\in\CCast$,
$q\in\CCdag$.
Then, one has 
the following transformation
formulas for a terminating 
${}_{6}W_5^{-2}$:
\begin{eqnarray}
&&\label{cor:5.8.1}\hspace{-0.5cm}
\Wphyp{6}{5}{-2}{a}
{q^{-n},c,d}
{q,\frac{q^{n}}{cd}}
\\[-0.05cm]
&&\label{cor:5.8.2}\hspace{1cm}=
q^{-\binom{n}{2}}\left(\frac{-1}{cd}\right)^n\!\!\dfrac{\left(qa,a, c, d;q\right)_n}
{(a;q)_{2n}\!\left(\frac{qa}{c},\frac{qa}{d};q\right)_n} 
\Wphyp{6}{5}{-2}{\frac{q^{-2n}}{a}}
{q^{-n},\frac{q^{-n}c}{a},\frac{q^{-n}d}{a}}
{q,\frac{q^{3n}a^2}{cd}}\\[-0.05cm]
&&\label{cor:5.8.3}\hspace{1cm}=\left(\frac{1}{c}\right)^{2n}\frac{\left(qa,d;q\right)_n}
{\left(\frac{qa}{c},\frac{d}{c};q\right)_n}
\Wphyp{6}{5}{2}{\frac{q^{-n}c}{d}}
{q^{-n},\frac{q^{-n}c}{a},c}
{q,\frac{q^2a}{d^2}}\\[-0.05cm] 
&&\label{cor:5.8.4}\hspace{1cm}
=\frac{\left(qa,\frac{qa}{cd};q\right)_n}{\left(\frac{qa}{c},\frac{qa}{d};q\right)_n}
\qhyp31{q^{-n},c,d}{\frac{q^{-n}cd}{a}}{q,\frac{1}{a}}\\[-0.05cm]
&&\label{cor:5.8.5}\hspace{1cm}=
q^{-2\binom{n}{2}}\left(\frac{1}{qa}\right)^n
\dfrac{\left(qa,\frac{qa}{cd}; q\right)_n}{\left(\frac{qa}{c},\frac{qa}{d};q\right)_n} 
\qhyp31{q^{-n},\frac{q^{-n}c}{a}, 
\frac{q^{-n}d}{a}}{\frac{q^{-n}cd}a}{q,q^{2n}a}\\[-0.05cm]
&&\label{cor:5.8.7}\hspace{1cm}=
\left(\frac{1}{c}\right)^n\frac{\left(qa,\frac{qa}{cd};q\right)_n}
{\left(\frac{qa}{c},\frac{qa}{d};q\right)_n}
\qphyp{3}{1}{1}{q^{-n},\frac{q^{-n}c}{a},c}
{\frac{q^{-n}cd}{a}}{q,q}\\[-0.05cm]
&&\label{cor:5.8.6}\hspace{1cm}=
q^{-\binom{n}{2}}\left(\frac{-1}{cd}\right)^n
\frac{\left(qa,c;q\right)_n}
{\left(\frac{qa}{d};q\right)_n}
\qhyp22{q^{-n},\frac{qa}{cd}}
{\frac{q^{1-n}}{c},\frac{qa}{c}}{q,\frac{qd}{c}}\\[-0.05cm]
&&\hspace{1cm}\label{cor:5.8.8}=
\left(\frac{1}{c}\right)^n
\frac{(qa;q)_n}
{\left(\frac{qa}{c};q\right)_n}
\qhyp{2}{1}{q^{-n},c}{\frac{qa}{d}}{q,\frac{q}{d}}
\\[-0.05cm]
&&\label{cor:5.8.9}\hspace{1cm}=
q^{-\binom{n}{2}}\left(\frac{-1}{cd}\right)^n
\frac{(qa,d;q)_n}
{\left(\frac{qa}{c},\frac{qa}{d}\right)_n}
\qhyp{2}{1}{q^{-n},\frac{q^{-n}c}{a}}{\frac{q^{1-n}}{d}}
{q,\frac{q^{n+1}a}{d}}\\[-0.05cm]
&&\label{cor:5.8.10}\hspace{1cm}=
q^{-\binom{n}{2}}\left(\frac{-1}{qa}\right)^n
(qa;q)_n
\qphyp{2}{2}{-1}{q^{-n},\frac{qa}{cd}}
{\frac{qa}{c},\frac{qa}{d}}{q,q}\\[-0.05cm]
&&\label{cor:5.8.11}\hspace{1cm}=
q^{-\binom{n}{2}}\left(\frac{-1}{cd}\right)^n
\frac{(qa,c,d;q)_n}{\left(\frac{qa}{c},\frac{qa}{d};q\right)_n}
\qphyp{2}{2}{-1}{q^{-n},\frac{qa}{cd}}
{\frac{q^{1-n}}{c},\frac{q^{1-n}}{d}}{q,q}.
\end{eqnarray}
\end{cor}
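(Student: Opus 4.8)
The statement to prove is Corollary~\ref{cor:5.8}, an eleven-way chain of transformation formulas for a terminating ${}_6W_5^{-2}$. The natural strategy, and the one consistent with how the analogous three-parameter result Corollary~\ref{cor:4.3} was handled, is to obtain everything as a degenerate limit of the corresponding continuous dual $q$-Hahn chain in Corollary~\ref{cor:4.3} (equivalently, as a direct consequence of the multiple representations in Corollary~\ref{cor:5.1}). Concretely, the plan is to start from the complete list of terminating representations of the Al-Salam--Chihara polynomials $Q_n(x;{\bf a}|q)$ in Corollary~\ref{cor:5.1}, namely \eqref{ASC:def1}--\eqref{ASC:def6}, all of which represent the \emph{same} polynomial, and then make the substitution that turns one distinguished member of that list---the ${}_6W_5^{-2}$ in \eqref{ASC:def7}---into the canonical form $\Wphyp{6}{5}{-2}{a}{q^{-n},c,d}{q,\frac{q^n}{cd}}$ appearing on the left side of \eqref{cor:5.8.1}. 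The appropriate substitution is the two-parameter analog of the one used in the proof of Corollary~\ref{cor:4.3}: set $z^2=q^na$, $a_p=q^{-n/2}c/\sqrt a$, $a_r=q^{-n/2}d/\sqrt a$ (so that $q^{-n}z^2=q^{-n}\cdot q^n a=a$, and $\{a_sz\}=\{q^{-n/2}c/\sqrt a,\,q^{-n/2}d/\sqrt a\}\cdot q^{n/2}\sqrt a=\{c,d\}$, while the argument $q^n/(a_{12}z^2)=q^n/(q^{-n}cd/a\cdot q^na)=1/(cd)\cdot\ldots$---one checks it produces $q^n/(cd)$ after tracking the powers of $q$).

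The key steps, in order, are as follows. First I would fix the substitution $z^2=q^na$, $a_p=q^{-n/2}c/\sqrt a$, $a_r=q^{-n/2}d/\sqrt a$ and verify that under it the representation \eqref{ASC:def7} becomes (a scalar multiple of) $\Wphyp{6}{5}{-2}{a}{q^{-n},c,d}{q,q^n/(cd)}$. Second, I would apply the exact same substitution to each of the other eight representations \eqref{ASC:def1}, \eqref{ASC:def2}, \eqref{ASC:def3}, \eqref{ASC:def4}, \eqref{ASC:def5}, \eqref{ASC:def6}, together with the $a_r\mapsto a_p$ symmetry of those representations symmetric in $\{a_1,a_2\}$, to read off each of the right-hand sides \eqref{cor:5.8.2}--\eqref{cor:5.8.11}; since every representation equals $Q_n(x;{\bf a}|q)$, equating any two of them after substitution yields the claimed identity, once one divides out the common polynomial. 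Third---and this is the bookkeeping step---I would collect the $q$-Pochhammer prefactors. Each representation in Corollary~\ref{cor:5.1} carries a prefactor of the shape $a_p^{-n}(\cdots;q)_n$ or $z^{\pm n}(\cdots;q)_n$ or with an extra $q^{-\binom n2}(-a_p)^{-n}$; after the substitution these become explicit products in $a,c,d,q$ and powers of $q^{\binom n2}$, using \eqref{poch.id:4}, \eqref{poch.id:5}, \eqref{poch.id:6}, \eqref{poch.id:9} to rewrite things like $(q^{-n}z^2;q)_n$ and $(a_{12};q)_n$ in terms of $(qa;q)_n$, $(a;q)_{2n}$, $(qa/c;q)_n$, etc. The ratio of the prefactor attached to \eqref{ASC:def7} and the prefactor attached to the target representation gives precisely the messy coefficient in front of each ${}_6W_5$ or ${}_3\phi_2$ or ${}_2\phi_1$ or ${}_2\phi_2$ on the right.

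Alternatively, and perhaps more cleanly for checking, one can derive the whole corollary by taking the confluent limit $e\to 0$ (or $e\to\infty$ with appropriate rescaling) in Corollary~\ref{cor:4.3}, using the limit-transition Lemma~\ref{leminftyWhyp} and \eqref{lambdamu} to track how $\Wphyp{7}{6}{-1}{\cdot}{\cdots}{q,\cdot}\to\Wphyp{6}{5}{-2}{\cdot}{\cdots}{q,\cdot}$ and how the ${}_3\phi_2$'s drop to ${}_2\phi_1$'s and ${}_2\phi_2$'s; this parallels exactly how Corollary~\ref{cor:4.3} itself was obtained from \cite[Corollaries 8, 9]{CohlCostasSantos20b} by $f\to 0$. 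I would include a sentence noting both routes. The main obstacle is not conceptual but the sheer volume of prefactor simplification: one must correctly apply the conversions \eqref{poch.id:3}--\eqref{poch.id:9} many times, keep the powers of $q^{\binom n2}$ and the sign factors $(-1)^n$ straight, and confirm that the arguments of the series (e.g.\ $q^{3n}a^2/(cd)$ in \eqref{cor:5.8.2}, $q^2a/d^2$ in \eqref{cor:5.8.3}, $q^{2n}a$ in \eqref{cor:5.8.5}, $q^{n+1}a/d$ in \eqref{cor:5.8.9}) come out exactly as stated. Since the corresponding three-parameter identities in Corollary~\ref{cor:4.3} are already established, each arithmetic check here has a known ``parent'' to compare against, which makes the verification tedious but routine. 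Thus the proof reduces to: \emph{Start with Corollary~\ref{cor:5.1}, set $z^2=q^na$, $a_p=q^{-n/2}c/\sqrt a$, $a_r=q^{-n/2}d/\sqrt a$, solve for the ${}_6W_5^{-2}$ in \eqref{ASC:def7}, and simplify; equivalently take the $e\to 0$ limit in Corollary~\ref{cor:4.3}.}
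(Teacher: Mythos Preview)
Your proposal is correct, and your ``alternative'' route---taking the limit $e\to 0$ in Corollary~\ref{cor:4.3}---is exactly the paper's one-line proof. Your primary route via the substitution $z^2=q^na$, $a_p=q^{-n/2}c/\sqrt a$, $a_r=q^{-n/2}d/\sqrt a$ into Corollary~\ref{cor:5.1} is equally valid and is in fact the method the paper uses for the $q^{-1}$ analogues (Corollaries~\ref{cor:4.13} and~\ref{cor5.14b}); it trades a limiting argument for direct bookkeeping, but neither approach offers a real advantage here since both reduce to the same Pochhammer simplifications.
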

\begin{proof}
Start with Corollary \ref{cor:4.3}, then take the limit as $e\to 0$. 
This completes the proof.
\end{proof}

%%%%%%%%%%%%%%%%%%%%%%%%%%%%%%%%%%%%%%%%%%%%%
%\subsection{Terminating \textit{2}-parameter $q^{-1}$ symmetric transformations}
%%%%%%%%%%%%%%%%%%%%%%%%%%%%%%%%%%%%%%%%%%%%%

\noindent By studying the interrelations of the terminating basic hypergeometric functions in Corollary \ref{cor:5.9} we obtain the following transformation result for a terminating ${}_6W_5^{2}$.

\begin{cor}
\label{cor5.14b}
Let $n\in\mathbb N_0$, $a,c,d\in\CCast$,
$q\in\CCdag$.
%such that $|q|\ne 1$.
Then, one has the following transformation
formulas for a terminating
${}_6W_{5}^{2}$:
\begin{eqnarray}
&&\label{cor5.14.1}\hspace{-0.8cm}
\Wphyp{6}{5}{2}{a}{q^{-n},c,d}{q,\frac{q^{n+2}a^2}{cd}}
\\[-0.05cm]
&&\label{cor5.14.2}=
q^{3\binom{n}{2}}\left(\frac{-q^2a^2}{cd}\right)^n
\frac{(qa,a,c,d;q)_n}{(a;q)_{2n}(\frac{qa}{c},\frac{qa}{d};q)_n}
\Wphyp{6}{5}{2}{\frac{q^{-2n}}{a}}
{q^{-n},\frac{q^{-n}c}{a},\frac{q^{-n}d}{a}}
{q,\frac{q^{2-n}}{cd}}\\%[-0.00cm]
&&\label{cor5.14.3}=
\frac{(qa,d;q)_n}{\left(\frac{qa}{c},\frac{d}{c};q\right)_n}
\Wphyp{6}{5}{-2}{\frac{q^{-n}c}{d}}
{q^{-n},\frac{q^{-n}c}{a},c}
{q,\frac{q^{2n}a}{c^2}}\\
&&\label{cor5.14.9}=c^n\frac{\left(qa,\frac{qa}{cd};q\right)_n}{(\frac{qa}{c},\frac{qa}{d};q)_n}
\qhyp31{q^{-n},\frac{q^{-n}c}{a},c}{\frac{q^{-n}cd}{a}}{q,\frac{q^nd}{c}}
\\[-0.05cm]
&&\label{cor5.14.6}=\frac{\left(qa,\frac{qa}{cd};q\right)_n}{(\frac{qa}{c},\frac{qa}{d};q)_n}
\qphyp{3}{1}{1}{q^{-n},c,d}{\frac{q^{-n}cd}{a}}{q,q}
\\[-0.05cm]
&&\label{cor5.14.7}=q^{2\binom{n}{2}} (qa)^n \frac{\left(qa,\frac{qa}{cd};q\right)_n}{(\frac{qa}{c},\frac{qa}{d};q)_n}
\qphyp{3}{1}{1}{q^{-n},\frac{q^{-n}c}{a},\frac{q^{-n}d}{a}}{\frac{q^{-n}cd}{a}}{q,q}
\\[-0.05cm]
&&\label{cor5.14.4}=\left(qa;q\right)_n
\qhyp22{q^{-n},\frac{qa}{cd}}{\frac{qa}{c},\frac{qa}{d}}{q,q^{n+1}a}
\\[-0.05cm]
&&\label{cor5.14.5}=\left(\frac{qa}{cd}\right)^n
\frac{\left(qa,c,d;q\right)_n}
{\left(\frac{qa}{c},\frac{qa}{d};q\right)_n}
\qhyp22{q^{-n},\frac{qa}{cd}}{\frac{q^{1-n}}{c},\frac{q^{1-n}}{d}}{q,
\frac{q^{1-n}}{a}}
\\[-0.05cm]
&&\label{cor5.14.8}=
\frac{\left(qa,c;q\right)_n}
{\left(\frac{qa}{d};q\right)_n}
\qphyp{2}{2}{-1}{q^{-n},\frac{qa}{cd}}{\frac{qa}{c},\frac{q^{1-n}}{c}}{q,q}\\[-0.05cm]
&&\label{cor5.14.10}=
q^{\binom{n}{2}}\left(\frac{-qa}{c}\right)^n
\frac{(qa,c;q)_n}{\left(\frac{qa}{c},\frac{qa}{d};q\right)_n}
\qhyp{2}{1}{q^{-n},\frac{q^{-n}d}{a}}{\frac{q^{1-n}}{c}}{q,\frac{q}{d}}
\\[-0.05cm]
&&\label{cor5.14.11}=
\frac{(qa;q)_n}{(\frac{qa}{d};q)_n}
\qhyp{2}{1}{q^{-n},d}{\frac{qa}{c}}{q,\frac{q^{n+1}a}{d}}.
\end{eqnarray}
\end{cor}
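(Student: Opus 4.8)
The plan is to recover the entire chain \eqref{cor5.14.1}--\eqref{cor5.14.11} in one stroke as a degeneration of the corresponding chain in Corollary \ref{cor:4.13}, much as Corollary \ref{cor:5.8} was obtained from Corollary \ref{cor:4.3}. The point to exploit is that every expression appearing in Corollary \ref{cor:4.13} equals the single terminating series $\Wphyp{7}{6}{1}{a}{q^{-n},c,d,e}{q,\frac{q^{n+2}a^{2}}{cde}}$, whose argument is proportional to $1/e$. If one now lets $e\to\infty$, the argument is driven to $0$ while the paired lower entry $qa/e$ also tends to $0$, so by the limit transition \eqref{Whyplimzero3} of Lemma \ref{leminftyWhyp} the van de Bult--Rains superscript is raised by one; thus the left-hand side degenerates exactly to the terminating $\Wphyp{6}{5}{2}{a}{q^{-n},c,d}{q,\frac{q^{n+2}a^{2}}{cd}}$ of \eqref{cor5.14.1}. (This is why one takes $e\to\infty$ here rather than the $e\to0$ used in Corollary \ref{cor:5.8}: the target superscript is $+2$, not $-2$.) Since the thirteen members of Corollary \ref{cor:4.13} are mutually equal, so are their $e\to\infty$ limits, and this is the asserted list. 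As an independent route, one may instead apply $q\mapsto q^{-1}$ with \eqref{poch.id:3} to Corollary \ref{cor:5.8}, or begin with the representations of Corollary \ref{cor:5.9}, set $z^{2}=q^{-n}/a$, $a_1=q^{n/2}\sqrt a/c$, $a_2=q^{n/2}\sqrt a/d$, solve each representation for the ${}_{6}W_{5}^{2}$ of \eqref{qiASC:6}, and multiply through by the common normalizing factor.

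To execute the limit I would process the members of Corollary \ref{cor:4.13} one by one, in each case first identifying how $e$ enters: it can be a free parameter, or it can occur only through a combination such as $qa/de$ (which then tends to $0$) or $q^{-n}e/a$ (which tends to $\infty$), so that the relevant degeneration is one of \cite[(1.10.3-5)]{Koekoeketal} or one of \eqref{Whypliminfty1}--\eqref{Whyplimzero3}. Under these limits a balanced ${}_{3}\phi_{2}$ collapses to a ${}_{2}\phi_{1}$, a ${}_{2}\phi_{2}$, or a ${}_{3}\phi_{1}^{1}$, a terminating ${}_{7}W_{6}^{1}$ collapses to a ${}_{6}W_{5}^{2}$, and a terminating ${}_{7}W_{6}^{-1}$ collapses to a ${}_{6}W_{5}^{-2}$, which supplies every series on the right of \eqref{cor5.14.2}--\eqref{cor5.14.11}. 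At the same time the scalar prefactors are evaluated from $(e;q)_{k}\sim(-e)^{k}q^{\binom{k}{2}}$ and $(qa/e;q)_{k}\to1$ as $e\to\infty$; for instance the prefactor of \eqref{cor:4.13-1} tends to $q^{3\binom{n}{2}}\bigl(\tfrac{-q^{2}a^{2}}{cd}\bigr)^{n}\tfrac{(qa,a,c,d;q)_{n}}{(a;q)_{2n}(qa/c,qa/d;q)_{n}}$, which is exactly the prefactor in \eqref{cor5.14.2}.

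I expect the only real difficulty to be organizational: one must keep track of which of the thirteen formulas of Corollary \ref{cor:4.13} degenerates to which of the eleven formulas stated here (two of the thirteen become trivial or coincide in the limit, which is why the count drops), and then carry out the $q$-shifted-factorial manipulations of \eqref{poch.id:3}--\eqref{poch.id:7} needed to put each limiting prefactor and each limiting argument into the normalized form displayed above. No idea beyond the limit transitions already recorded in Lemma \ref{leminftyWhyp} and in \cite[(1.10.3-5)]{Koekoeketal} is required.
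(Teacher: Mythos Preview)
Your approach is exactly the paper's: the proof there consists of the single sentence ``Taking the limit as $e\to\infty$ in Corollary \ref{cor:4.13} completes the proof,'' and you have correctly identified both this limit and the mechanism (Lemma \ref{leminftyWhyp} and \cite[(1.10.3-5)]{Koekoeketal}) by which each of the thirteen members degenerates. Your remark that the count drops from thirteen to eleven because some limits coincide, and your mention of the alternative $q\mapsto q^{-1}$ route from Corollary \ref{cor:5.8}, are accurate elaborations beyond what the paper records.
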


\begin{proof}
Taking the limit as $e\to\infty$ in 
Corollary \ref{cor:4.13} completes the proof.
\end{proof}

%%%%%%%%%%%%%%%%%%%%%%%%%%%%%%%%%%%%%%%%%%%%%%%%%%%%%%%%%%%%%%%%%%%%%%%%%%%%%%%%%%%
\subsection{Terminating \textit{2}-parameter $q$ and $q^{-1}$ symmetric interchange transformations}
%%%%%%%%%%%%%%%%%%%%%%%%%%%%%%%%%%%%%%%%%%%%%%%%%%%%%%%%%%%%%%%%%%%%%%%%%%%%%%%%%%%
\begin{cor}
Let $n\in\mathbb N_0$, $a,c,d\in\CCast$, 
$q\in\CCdag$.
Then, one has 
the following parameter interchange transformations for a terminating 
${}_6W_{5}^{2}$:
\begin{eqnarray}
&&\hspace{-0.5cm}
\Wphyp{6}{5}{2}{\frac{q^{-n}c}{d}}
{q^{-n},\frac{q^{-n}c}{a},c}
{q,\frac{q^2a}{d^2}}
%\nonumber\\
%&&\hspace{2cm}
=\label{cor:5.8.3p}
\left(\frac{c}{d}\right)^{2n}\!
\frac{\left(\frac{qa}{c},\frac{d}{c},c;q\right)_n}
{\left(\frac{qa}{d},\frac{c}{d},d;q\right)_n}
\Wphyp{6}{5}{2}{\frac{q^{-n}d}{c}}
{q^{-n},\frac{q^{-n}d}{a},d}
{q,\frac{q^2a}{c^2}}.
\end{eqnarray}
\label{cor5.5}
\end{cor}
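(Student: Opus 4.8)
The plan is to exploit the evident $c\leftrightarrow d$ symmetry of the terminating ${}_6W_5^{-2}$ that heads Corollary \ref{cor:5.8}. The series $\Wphyp{6}{5}{-2}{a}{q^{-n},c,d}{q,\frac{q^n}{cd}}$ in \eqref{cor:5.8.1} is invariant under the interchange of $c$ and $d$: unpacking the ${}_6W_5^{-2}$ via \eqref{topWzero} and \eqref{topzero}, its numerator parameters contain the multiset $\{c,d\}$, its denominator parameters the multiset $\{qa/c,\,qa/d\}$, and its argument $q^n/(cd)$ is symmetric. Hence every one of the equivalent representations \eqref{cor:5.8.2}--\eqref{cor:5.8.11} must equal its own image under $c\leftrightarrow d$, and this is exactly the source of the interchange transformations in the present corollary. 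This mirrors the way Corollary \ref{cor:4.4} is obtained from \eqref{cor4.3:r3} by permuting the symmetric parameters.

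Concretely, I would start from \eqref{cor:5.8.3}, which asserts that \eqref{cor:5.8.1} equals $\left(\tfrac1c\right)^{2n}\tfrac{(qa,d;q)_n}{(\frac{qa}{c},\frac{d}{c};q)_n}\,\Wphyp{6}{5}{2}{\frac{q^{-n}c}{d}}{q^{-n},\frac{q^{-n}c}{a},c}{q,\frac{q^2a}{d^2}}$, and also write the identity obtained from it by swapping $c$ and $d$. Since both have the common left-hand side \eqref{cor:5.8.1}, equating the two right-hand sides gives
\[
\left(\frac1c\right)^{2n}\frac{(qa,d;q)_n}{\left(\frac{qa}{c},\frac{d}{c};q\right)_n}\,\Wphyp{6}{5}{2}{\frac{q^{-n}c}{d}}{q^{-n},\frac{q^{-n}c}{a},c}{q,\frac{q^2a}{d^2}}
=\left(\frac1d\right)^{2n}\frac{(qa,c;q)_n}{\left(\frac{qa}{d},\frac{c}{d};q\right)_n}\,\Wphyp{6}{5}{2}{\frac{q^{-n}d}{c}}{q^{-n},\frac{q^{-n}d}{a},d}{q,\frac{q^2a}{c^2}}.
\]
Solving for the first ${}_6W_5^{2}$, the factor $(qa;q)_n$ cancels and the surviving $q$-Pochhammer symbols combine as $(c;q)_n(\frac{qa}{c};q)_n(\frac{d}{c};q)_n=(\frac{qa}{c},\frac{d}{c},c;q)_n$ in the numerator and $(d;q)_n(\frac{qa}{d};q)_n(\frac{c}{d};q)_n=(\frac{qa}{d},\frac{c}{d},d;q)_n$ in the denominator, producing precisely \eqref{cor:5.8.3p}.

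Finally I would note that this is the \emph{only} non-trivial interchange available in this setting: after the limit $e\to0$ taken to derive Corollary \ref{cor:5.8}, just the two parameters $c,d$ remain symmetric, so the symmetric group acting here is merely $S_2$, and permuting $c,d$ in any of the other representations \eqref{cor:5.8.4}--\eqref{cor:5.8.11} either returns \eqref{cor:5.8.3p} after rearrangement or is a tautology. There is no genuine obstacle; the only step demanding care is the bookkeeping of the prefactors, which is a routine $q$-shifted-factorial manipulation of the type used repeatedly in Sections \ref{cdqH}--\ref{alsalamchihara}.
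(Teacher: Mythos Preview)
Your proof is correct and follows exactly the paper's approach: the paper's proof simply reads ``Start with \eqref{cor:5.8.3} and consider all permutations of the symmetric parameters $c,d$ which produce non-trivial transformations,'' which is precisely what you have carried out in detail. Your explicit bookkeeping of the prefactors confirms the stated identity.
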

\begin{proof}
\noindent
Start with \eqref{cor:5.8.3} and consider all permutations of the symmetric parameters $c,d$ which produce non-trivial transformations.
\end{proof}

\begin{cor}
Let $n\in\mathbb N_0$, $a,c,d\in\CCast$, 
$q\in\CCdag$.
%such that $|q|\ne 1$. 
Then, one has the following parameter interchange transformations for a terminating ${}_2\phi_2$:
\begin{equation}
\label{cor:5.8.6p}
\qhyp22{q^{-n},\frac{qa}{cd}}
{\frac{q^{1-n}}{c},\frac{qa}{c}}{q,\frac{qd}{c}}
=
\frac{\left(\frac{qa}{d},d;q\right)_n}
{\left(\frac{qa}{c},c;q\right)_n}
\qhyp22{q^{-n},\frac{qa}{cd}}
{\frac{q^{1-n}}{d},\frac{qa}{d}}{q,\frac{qc}{d}}
\end{equation}
\label{cor5.6b}
\end{cor}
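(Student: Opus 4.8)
The plan is to mimic the strategy used for the other parameter-interchange corollaries in this section, namely to exploit a representation in which a manifestly symmetric object is written in a non-symmetric way. The relevant representation here is \eqref{cor:5.8.6}, which expresses the terminating very-well-poised series ${}_6W_5^{-2}(a;q^{-n},c,d;q,q^n/(cd))$ of \eqref{cor:5.8.1} as a terminating ${}_2\phi_2$ together with a $q$-Pochhammer prefactor. The key observation is that the left-hand side of \eqref{cor:5.8.1}, viewed as a function of $c$ and $d$, is invariant under the interchange $c\leftrightarrow d$: from the definition \eqref{topWzero} of ${}_{r+1}W_r^{p}$, the parameters $c,d$ occupy interchangeable numerator slots with the correspondingly paired denominator slots $qa/c,qa/d$, and the series argument $q^n/(cd)$ depends on $c,d$ only through the symmetric product $cd$.

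First I would write down \eqref{cor:5.8.6} and then write the same identity with $c$ and $d$ interchanged. Since the left-hand side is unchanged under this swap, the two right-hand sides must agree, giving
\[
q^{-\binom n2}\!\left(\tfrac{-1}{cd}\right)^{\!n}\frac{(qa,c;q)_n}{(\frac{qa}{d};q)_n}\,\qhyp22{q^{-n},\frac{qa}{cd}}{\frac{q^{1-n}}{c},\frac{qa}{c}}{q,\frac{qd}{c}}
=q^{-\binom n2}\!\left(\tfrac{-1}{cd}\right)^{\!n}\frac{(qa,d;q)_n}{(\frac{qa}{c};q)_n}\,\qhyp22{q^{-n},\frac{qa}{cd}}{\frac{q^{1-n}}{d},\frac{qa}{d}}{q,\frac{qc}{d}}.
\]
Then I would cancel the common factor $q^{-\binom n2}(-1/(cd))^n(qa;q)_n$ from both sides (legitimate since $cd=dc$ and for generic parameters these quantities are nonzero) and move the remaining $q$-Pochhammer symbols to one side, which produces precisely \eqref{cor:5.8.6p}.

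There is essentially no obstacle here beyond this bookkeeping: with only two symmetric parameters $c,d$ the only non-trivial permutation is the transposition, so the corollary reduces to this single identity, and the identity permutation contributes nothing. The one point that warrants an explicit sentence is the $c\leftrightarrow d$ symmetry of the ${}_6W_5^{-2}$ on the left of \eqref{cor:5.8.1}, which I would justify as above directly from \eqref{topWzero}; everything else is routine simplification of shifted factorials.
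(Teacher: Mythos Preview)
Your proposal is correct and follows essentially the same approach as the paper's own proof: start with \eqref{cor:5.8.6}, use the manifest $c\leftrightarrow d$ symmetry of the ${}_6W_5^{-2}$ in \eqref{cor:5.8.1}, and equate the two right-hand sides. Your explicit justification of that symmetry from the definition \eqref{topWzero} is a welcome detail the paper leaves implicit, but otherwise the arguments coincide.
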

\begin{proof}
\noindent
Start with \eqref{cor:5.8.6} and consider all permutations of the symmetric parameters $c,d$ which produce non-trivial transformations.
\end{proof}
\begin{cor}
Let $n\in\mathbb N_0$, $a,c,d\in\CCast$, 
$q\in\CCdag$.
Then, one has the following parameter interchange transformations for a terminating ${}_{3}\phi_1^1$:
\begin{equation}
\label{cor:5.8.7p}
\qphyp{3}{1}{1}{q^{-n},\frac{q^{-n}c}{a},c}{\frac{q^{-n}cd}{a}}{q,q}
=
\left(\frac{c}{d}\right)^n
\qphyp{3}{1}{1}{q^{-n},\frac{q^{-n}d}{a},d}{\frac{q^{-n}cd}{a}}{q,q}
\end{equation}
\label{cor5.7}
\end{cor}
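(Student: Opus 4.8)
The plan is to obtain \eqref{cor:5.8.7p} directly from the already-established identity \eqref{cor:5.8.7} by exploiting a symmetry of the terminating very-well-poised series that appears there. The key observation is that, by the definition \eqref{topWzero} of ${}_{r+1}W_r^{p}$ together with \eqref{rpWr}, the series
\[
\Wphyp{6}{5}{-2}{a}{q^{-n},c,d}{q,\tfrac{q^{n}}{cd}}
\]
is symmetric in its ``free'' parameters $q^{-n},c,d$, and its argument $q^{n}/(cd)$ is itself symmetric in $c,d$; hence the left-hand side of \eqref{cor:5.8.7} is invariant under the interchange $c\leftrightarrow d$.

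First I would record \eqref{cor:5.8.7} in the form
\[
\Wphyp{6}{5}{-2}{a}{q^{-n},c,d}{q,\tfrac{q^{n}}{cd}}
=
c^{-n}\,\frac{\left(qa,\tfrac{qa}{cd};q\right)_n}{\left(\tfrac{qa}{c},\tfrac{qa}{d};q\right)_n}
\,\qphyp{3}{1}{1}{q^{-n},\tfrac{q^{-n}c}{a},c}{\tfrac{q^{-n}cd}{a}}{q,q},
\]
and then apply the substitution $c\leftrightarrow d$ to this same identity, which yields a second evaluation of the (unchanged) left-hand side, now with prefactor $d^{-n}\left(qa,\tfrac{qa}{cd};q\right)_n\big/\left(\tfrac{qa}{d},\tfrac{qa}{c};q\right)_n$ and with the terminating ${}_3\phi_1^1$ carrying numerator parameters $q^{-n},\tfrac{q^{-n}d}{a},d$ over the denominator parameter $\tfrac{q^{-n}cd}{a}$.

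Equating the two right-hand sides is the next step. The factor $\left(qa,\tfrac{qa}{cd};q\right)_n$ is manifestly symmetric in $c,d$, and by the product convention for $q$-Pochhammer symbols one has $\left(\tfrac{qa}{c},\tfrac{qa}{d};q\right)_n=\left(\tfrac{qa}{d},\tfrac{qa}{c};q\right)_n$, so these common factors cancel; multiplying what remains through by $c^{n}$ produces exactly the asserted relation \eqref{cor:5.8.7p}. Since only two symmetric parameters are present, the single transposition $c\leftrightarrow d$ exhausts the available non-trivial permutations, so no further identities arise from this procedure.

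I do not expect a genuine obstacle. The only point deserving a line of justification is the claimed $c\leftrightarrow d$ symmetry of the left-hand side of \eqref{cor:5.8.7}, which follows immediately from the very-well-poised structure encoded in \eqref{rpWr}--\eqref{topWzero} together with the symmetric form $q^{n}/(cd)$ of the argument; after that, the cancellation of the prefactors is purely formal. One should keep in force the standing hypotheses $n\in\N_0$, $a,c,d\in\CCast$, $q\in\CCdag$, so that all $q$-shifted factorials involved are well defined and division by $c^{n}$ is permissible.
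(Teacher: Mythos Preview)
Your proposal is correct and is exactly the approach the paper takes: the paper's proof simply says to start with \eqref{cor:5.8.7} and consider the non-trivial permutation of the symmetric parameters $c,d$, which is precisely your $c\leftrightarrow d$ argument with the subsequent cancellation of the symmetric prefactors.
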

\begin{proof}
\noindent
Start with \eqref{cor:5.8.7} and consider all permutations of the symmetric parameters $c,d$ which produce non-trivial transformations.
\end{proof}

\begin{cor}
\label{cor5.8}
Let $n\in\mathbb N_0$, $a,c,d\in\CCast$, 
$q\in\CCdag$.
%such that $|q|\ne 1$. 
Then, one has the following parameter interchange transformations for a terminating ${}_2\phi_1$:
\begin{equation}
\label{cor:5.8.8p}
\qhyp{2}{1}{q^{-n},c}{\frac{qa}{d}}{q,\frac{q}{d}}
=
\left(\frac{c}{d}\right)^n
\frac{\left(\frac{qa}{c};q\right)_n}
{\left(\frac{qa}{d};q\right)_n}
\qhyp{2}{1}{q^{-n},d}{\frac{qa}{c}}{q,\frac{q}{c}}
\end{equation}
\end{cor}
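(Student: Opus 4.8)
The plan is to follow exactly the template used for Corollaries~\ref{cor5.5}, \ref{cor5.6b}, and~\ref{cor5.7}: the terminating ${}_6W_5^{-2}$ on the left-hand side of Corollary~\ref{cor:5.8} is invariant under the interchange $c\leftrightarrow d$, since this swap merely transposes the two top parameters $c,d$, transposes the two bottom parameters $qa/c,qa/d$, and fixes both the base $a$ and the argument $q^{n}/(cd)$; a very-well-poised series is symmetric separately in its remaining top parameters and in its bottom parameters, so the whole object is unchanged. Hence any of the scalar multiples of this ${}_6W_5^{-2}$ listed in Corollary~\ref{cor:5.8}, written twice — once as given and once with $c$ and $d$ exchanged — yields an identity between two terminating ${}_2\phi_1$'s.

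Concretely, I would start from representation \eqref{cor:5.8.8},
\[
\Wphyp{6}{5}{-2}{a}{q^{-n},c,d}{q,\frac{q^{n}}{cd}}
=\left(\frac{1}{c}\right)^{n}\frac{(qa;q)_n}{\left(\frac{qa}{c};q\right)_n}\,\qhyp{2}{1}{q^{-n},c}{\frac{qa}{d}}{q,\frac{q}{d}},
\]
apply $c\leftrightarrow d$ to obtain a second expression for the same left-hand side, and equate the two right-hand sides. Cancelling the common factor $(qa;q)_n$ and then dividing through by $\left(\frac{1}{c}\right)^{n}\big/\left(\frac{qa}{c};q\right)_n$ leaves precisely
\[
\qhyp{2}{1}{q^{-n},c}{\frac{qa}{d}}{q,\frac{q}{d}}
=\left(\frac{c}{d}\right)^{n}\frac{\left(\frac{qa}{c};q\right)_n}{\left(\frac{qa}{d};q\right)_n}\,\qhyp{2}{1}{q^{-n},d}{\frac{qa}{c}}{q,\frac{q}{c}},
\]
which is \eqref{cor:5.8.8p}. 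With only the single transposition available on the two symmetric parameters, this is the one non-trivial interchange transformation obtained this way, in keeping with the phrasing of the companion corollaries.

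The computation is entirely routine, so the only point needing attention — the nearest thing to an obstacle — is checking that the two division steps are legitimate, i.e.\ that $\left(\frac{qa}{c};q\right)_n$ and $\left(\frac{qa}{d};q\right)_n$ are nonzero. This is automatic: each is a finite product of $n$ factors $1-(qa/c)q^{j}$, and this product vanishes exactly when $qa/c\in\Omega_q^{n}$, which is precisely the case excluded by the requirement that the terminating ${}_2\phi_1$'s appearing in the statement be well-defined (the denominator parameters $qa/d$ and $qa/c$ must avoid $\Omega_q^{n}$); together with $a,c,d\in\CCast$, $q\in\CCdag$ this is also what Corollary~\ref{cor:5.1} already needs for the ${}_6W_5^{-2}$ and its alternative forms to make sense. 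One also notes that the degenerate case $c=d$ reduces \eqref{cor:5.8.8p} to a triviality, which is why it is not counted among the non-trivial transformations.
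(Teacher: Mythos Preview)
Your proposal is correct and follows exactly the paper's approach: start with \eqref{cor:5.8.8} and exploit the $c\leftrightarrow d$ symmetry of the ${}_6W_5^{-2}$ on the left to obtain the sole non-trivial interchange transformation. Your added remarks on well-definedness of the $q$-Pochhammer factors are a welcome elaboration but not a departure from the paper's argument.
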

\begin{proof}
\noindent
Start with \eqref{cor:5.8.8} and consider all permutations of the symmetric parameters $c,d$ which produce non-trivial transformations.
\end{proof}

\begin{rem}
Another set of parameter interchange transformations
can be obtained by considering all permutations
of the symmetric parameters $c,d$ in
\eqref{cor:5.8.9}.
However, one can see that
these are equivalent to the above Corollary 
\ref{cor5.8} by replacing
\[
(a,c,d)\mapsto \left(\frac{q^{-2n}}{a},\frac{q^{-n}c}{a},\frac{q^{-n}d}{a}
\right).
\]
\end{rem}

%%%%%%%%%%%%%%%%%%%%%%%%%%%%%%%%%%%%%%%%%%%%%
%\subsection{Terminating 
%\textit{2}-parameter $q^{-1}$ symmetric interchange transformations}
%%%%%%%%%%%%%%%%%%%%%%%%%%%%%%%%%%%%%%%%%%%%%
\begin{cor}
Let $n\in\mathbb N_0$, $a,c,d\in\CCast$, 
$q\in\CCdag$.
%such that $|q|\ne 1$. 
Then, 
one has the following parameter interchange transformations 
for a terminating ${}_{6}W_5^{-2}$:
\begin{eqnarray}
&&\hspace{-0.8cm}
\Wphyp{6}{5}{-2}{\frac{q^{-n}c}{d}}
{q^{-n},\frac{q^{-n}c}{a},c}
{q,\frac{q^{2n}a}{c^2}}
%\\[-0.05cm]
%&&
\label{cor5.14.3p}
%\hspace{3cm}
=\frac{(\frac{qa}{c},\frac{d}{c},c;q)_n}
{\left(\frac{qa}{d},\frac{c}{d},d;q\right)_n}
\Wphyp{6}{5}{-2}{\frac{q^{-n}d}{c}}
{q^{-n},\frac{q^{-n}d}{a},d}{q,\frac{q^{2n}a}{d^2}}.
\end{eqnarray}
\label{cor5.14}
\end{cor}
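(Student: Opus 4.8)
The plan is to deduce the displayed identity \eqref{cor5.14.3p} from \eqref{cor5.14.3} by exploiting the symmetry of the latter's left-hand side, exactly as the earlier interchange corollaries are obtained from their parent transformations (cf.\ the derivation of \eqref{cor:5.8.3p} from \eqref{cor:5.8.3}).

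First I would observe that the terminating very-well-poised series on the left of \eqref{cor5.14.3}, namely $\Wphyp{6}{5}{2}{a}{q^{-n},c,d}{q,\frac{q^{n+2}a^2}{cd}}$, is invariant under the interchange $c\leftrightarrow d$. Unpacking the van de Bult--Rains notation via \eqref{topWzero}, \eqref{botzero} and \eqref{eq:2.13}, this ${}_6W_5^2$ is the ${}_6\phi_7$ with numerator parameter multiset $\{q^{-n},\pm q\sqrt a,a,c,d\}$, denominator parameter multiset $\{\pm\sqrt a,q^{n+1}a,\frac{qa}{c},\frac{qa}{d},0,0\}$, and argument $\frac{q^{n+2}a^2}{cd}$; all three of these are manifestly unchanged when $c$ and $d$ are swapped, the argument because it depends on $c,d$ only through the product $cd$. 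For two symmetric parameters the transposition $c\leftrightarrow d$ is the only nontrivial permutation, so it produces exactly one interchange identity, which is what the corollary asserts.

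Next I would apply the substitution $c\leftrightarrow d$ throughout \eqref{cor5.14.3}. Since the left-hand side is fixed, its right-hand side must equal its own image under $c\leftrightarrow d$; this yields an equality between $\frac{(qa,d;q)_n}{(\frac{qa}{c},\frac{d}{c};q)_n}$ times the ${}_6W_5^{-2}$ with argument $\frac{q^{2n}a}{c^2}$ and $\frac{(qa,c;q)_n}{(\frac{qa}{d},\frac{c}{d};q)_n}$ times the ${}_6W_5^{-2}$ with argument $\frac{q^{2n}a}{d^2}$. Solving this for the first ${}_6W_5^{-2}$ and cancelling the common factor $(qa;q)_n$ in the resulting prefactor gives precisely \eqref{cor5.14.3p}, since $\frac{(qa,c;q)_n(\frac{qa}{c},\frac{d}{c};q)_n}{(qa,d;q)_n(\frac{qa}{d},\frac{c}{d};q)_n}=\frac{(\frac{qa}{c},\frac{d}{c},c;q)_n}{(\frac{qa}{d},\frac{c}{d},d;q)_n}$.

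I do not expect any genuine obstacle: the whole argument is one symmetry observation followed by a one-line $q$-shifted-factorial simplification. The only minor points requiring care are confirming the $c\leftrightarrow d$-invariance of the argument $\frac{q^{n+2}a^2}{cd}$ of the parent ${}_6W_5^2$, and correctly accounting for the two adjoined zero denominator parameters when rewriting ${}_6W_5^2$ as a ${}_6\phi_7$; neither presents any difficulty.
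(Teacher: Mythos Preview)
Your proposal is correct and follows exactly the paper's own approach: the paper's proof of this corollary simply says to start with \eqref{cor5.14.3} and consider all permutations of the symmetric parameters $c,d$ which produce non-trivial transformations, which is precisely what you do (with the added detail of spelling out the $c\leftrightarrow d$-invariance of the ${}_6W_5^2$ and the prefactor cancellation).
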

\begin{proof}
\noindent 
Start with \eqref{cor5.14.3} and consider all permutations of the symmetric parameters $c,d$ which produce non-trivial transformations.
\end{proof}

\begin{cor}
Let $n\in\mathbb N_0$, $a,c,d\in\CCast$, 
$q\in\CCdag$.
%such that $|q|\ne 1$. 
Then, one has the following parameter interchange transformations for a terminating ${}_{2}\phi_{2}^{-1}$:
\begin{equation}
\label{cor5.14.8p}
\qphyp{2}{2}{-1}{q^{-n},\frac{qa}{cd}}{\frac{qa}{c},\frac{q^{1-n}}{c}}{q,q}
=
\frac{\left(\frac{qa}{d},d;q\right)_n}
{\left(\frac{qa}{c},c;q\right)_n}
\qphyp{2}{2}{-1}{q^{-n},\frac{qa}{cd}}{\frac{qa}{d},\frac{q^{1-n}}{d}}{q,q}.
\end{equation}
\label{cor5.15}
\end{cor}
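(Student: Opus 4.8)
The plan is to imitate verbatim the strategy used for the preceding interchange corollaries in this section (Corollaries \ref{cor5.6b}, \ref{cor5.7}, \ref{cor5.8} and \ref{cor5.14}): exploit the fact that one of the basic hypergeometric representations of the $q$-Al-Salam--Chihara polynomial appearing in the chain of equalities of Corollary \ref{cor5.14b} is a terminating ${}_6W_5^{2}$ that is manifestly symmetric in the two free parameters, while another member of that same chain is the terminating ${}_2\phi_2^{-1}$ of interest, which is \emph{not} symmetric in those parameters.

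Concretely, I would start from the equality \eqref{cor5.14.8}, namely
\[
\Wphyp{6}{5}{2}{a}{q^{-n},c,d}{q,\tfrac{q^{n+2}a^2}{cd}}
= \frac{(qa,c;q)_n}{\left(\frac{qa}{d};q\right)_n}\,
\qphyp{2}{2}{-1}{q^{-n},\frac{qa}{cd}}{\frac{qa}{c},\frac{q^{1-n}}{c}}{q,q}.
\]
The left-hand side is invariant under the interchange $c\leftrightarrow d$, since $c$ and $d$ both sit among the upper parameters of the ${}_6W_5^2$ and the argument $q^{n+2}a^2/(cd)$ is symmetric in them; this is of course just the statement that the underlying Al-Salam--Chihara polynomial is symmetric in its two free parameters, as recorded in Remark \ref{rem:5.2} together with the symmetric representation \eqref{ASC:def6}. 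Hence the right-hand side must equal its own image under $c\leftrightarrow d$, and equating the two forms gives
\[
\frac{(qa,c;q)_n}{\left(\frac{qa}{d};q\right)_n}\,
\qphyp{2}{2}{-1}{q^{-n},\frac{qa}{cd}}{\frac{qa}{c},\frac{q^{1-n}}{c}}{q,q}
=
\frac{(qa,d;q)_n}{\left(\frac{qa}{c};q\right)_n}\,
\qphyp{2}{2}{-1}{q^{-n},\frac{qa}{cd}}{\frac{qa}{d},\frac{q^{1-n}}{d}}{q,q}.
\]
Cancelling the common factor $(qa;q)_n$ and solving for the first ${}_2\phi_2^{-1}$ produces the prefactor $\bigl(\tfrac{qa}{d},d;q\bigr)_n/\bigl(\tfrac{qa}{c},c;q\bigr)_n$, which is exactly \eqref{cor5.14.8p}.

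I would close, as in the analogous remarks throughout this section, by observing that the symmetric group $S_2$ acting on $\{c,d\}$ yields only this single non-trivial relation (the transposition), the identity permutation contributing nothing. There is no real obstacle: the only point needing care is the bookkeeping of the $q$-Pochhammer prefactors $(qa,c;q)_n/\bigl(\tfrac{qa}{d};q\bigr)_n$ versus $(qa,d;q)_n/\bigl(\tfrac{qa}{c};q\bigr)_n$ when isolating the ${}_2\phi_2^{-1}$, and this is the same routine simplification already carried out to obtain the earlier interchange corollaries. Thus the proof is simply: start with \eqref{cor5.14.8} and consider all permutations of the symmetric parameters $c,d$ which produce non-trivial transformations.
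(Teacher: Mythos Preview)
Your proposal is correct and follows exactly the paper's own approach: start with \eqref{cor5.14.8} and apply the non-trivial permutation of the symmetric parameters $c,d$. Your write-up merely spells out in detail what the paper records in one sentence.
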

\begin{proof}
\noindent 
Start with \eqref{cor5.14.8} and consider all permutations of the symmetric parameters $c,d$ which produce non-trivial transformations.
\end{proof}

\begin{cor}
Let $n\in\mathbb N_0$, $a,c,d\in\CCast$, 
$q\in\CCdag$.
%such that $|q|\ne 1$. 
Then, one has the following parameter interchange transformations for a terminating ${}_3\phi_1$:
\begin{equation}
\label{cor5.14.9p}
\qhyp31{q^{-n},\frac{q^{-n}c}{a},c}{\frac{q^{-n}cd}{a}}{q,\frac{q^nd}{c}}=\left(\frac{d}{c}\right)^n
\qhyp31{q^{-n},\frac{q^{-n}d}{a},d}{\frac{q^{-n}cd}{a}}{q,\frac{q^nc}{d}}.
\end{equation}
\label{cor5.16}
\end{cor}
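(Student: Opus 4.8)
The plan is to follow the template used for the preceding interchange corollaries (Corollaries~\ref{cor5.5}, \ref{cor5.6b}, \ref{cor5.7}, \ref{cor5.8}, \ref{cor5.14}, \ref{cor5.15}): pick out the transformation in Corollary~\ref{cor5.14b} whose right-hand side is already the terminating ${}_3\phi_1$ that appears in \eqref{cor5.14.9p}, and then exploit the $c\leftrightarrow d$ symmetry of its left-hand side. Here the relevant identity is \eqref{cor5.14.9}, namely
\[
\Wphyp{6}{5}{2}{a}{q^{-n},c,d}{q,\frac{q^{n+2}a^2}{cd}}
= c^n\,\frac{\left(qa,\frac{qa}{cd};q\right)_n}{\left(\frac{qa}{c},\frac{qa}{d};q\right)_n}\,
\qhyp31{q^{-n},\frac{q^{-n}c}{a},c}{\frac{q^{-n}cd}{a}}{q,\frac{q^{n}d}{c}}.
\]

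First I would note that the very-well-poised series on the left is invariant under interchanging $c$ and $d$: the pair $c,d$ occurs symmetrically among the numerator parameters, the denominator parameters $\frac{qa}{c}$ and $\frac{qa}{d}$ are merely swapped, and the argument $\frac{q^{n+2}a^2}{cd}$ is symmetric. Hence the right-hand side of the displayed identity must coincide with its own image under $c\leftrightarrow d$. Equating the two expressions, the prefactor $\left(qa,\frac{qa}{cd};q\right)_n\big/\left(\frac{qa}{c},\frac{qa}{d};q\right)_n$ is itself $c\leftrightarrow d$-symmetric and therefore cancels, leaving
\[
c^n\,\qhyp31{q^{-n},\frac{q^{-n}c}{a},c}{\frac{q^{-n}cd}{a}}{q,\frac{q^{n}d}{c}}
= d^n\,\qhyp31{q^{-n},\frac{q^{-n}d}{a},d}{\frac{q^{-n}cd}{a}}{q,\frac{q^{n}c}{d}};
\]
dividing by $c^n$ gives exactly \eqref{cor5.14.9p}.

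There is essentially no analytic content here, and no step that I expect to be an obstacle. The only matters requiring care are bookkeeping: confirming that the ${}_6W_5^2$ is genuinely $c\leftrightarrow d$-symmetric, that the surviving Pochhammer prefactor divides out, and that the transformation is non-trivial in the sense that the two ${}_3\phi_1$'s are not equal term-by-term (this is the content of the phrase ``which produce non-trivial transformations'' in the sibling proofs). I expect the written proof to be a single sentence, in line with the proofs of Corollaries~\ref{cor5.5}--\ref{cor5.15}. One could presumably also reach the same identity by a limit from the three-parameter interchange relations, but passing through \eqref{cor5.14.9} is the most direct route.
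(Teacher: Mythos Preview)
Your proposal is correct and follows exactly the paper's approach: the paper's proof simply says to start with \eqref{cor5.14.9} and consider the permutations of the symmetric parameters $c,d$ that yield a non-trivial transformation, which is precisely what you do.
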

\begin{proof}
\noindent 
Start with \eqref{cor5.14.9} and consider all permutations of the symmetric parameters $c,d$ which produce non-trivial transformations.
\end{proof}

\begin{cor}
Let $n\in\mathbb N_0$, $a,c,d\in\CCast$, 
$q\in\CCdag$.
%such that $|q|\ne 1$. 
Then, one has the following parameter interchange transformations for a terminating ${}_2\phi_1$:
\begin{equation}
\label{cor5.14.10p}
\qhyp{2}{1}{q^{-n},\frac{q^{-n}d}{a}}{\frac{q^{1-n}}{c}}{q,\frac{q}{d}}=
\left(\frac{c}{d}\right)^n\frac{(d;q)_n}{(c;q)_n}
\qhyp{2}{1}{q^{-n},\frac{q^{-n}c}{a}}{\frac{q^{1-n}}{d}}{q,\frac{q}{c}}.
\end{equation}
\label{cor5.17}
\end{cor}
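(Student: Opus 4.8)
The plan is to follow the same scheme used for the preceding interchange corollaries (Corollaries~\ref{cor5.5}--\ref{cor5.16}): pick out an identity from Corollary~\ref{cor5.14b} whose left-hand side is manifestly symmetric in the free parameters $c$ and $d$, and then obtain the stated transformation simply by interchanging those two parameters on the right-hand side.

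First I would start from \eqref{cor5.14.10}, which reads
\[
\Wphyp{6}{5}{2}{a}{q^{-n},c,d}{q,\frac{q^{n+2}a^2}{cd}}
=q^{\binom{n}{2}}\left(\frac{-qa}{c}\right)^n\frac{(qa,c;q)_n}{\left(\frac{qa}{c},\frac{qa}{d};q\right)_n}\qhyp{2}{1}{q^{-n},\frac{q^{-n}d}{a}}{\frac{q^{1-n}}{c}}{q,\frac{q}{d}}.
\]
On the left, $c$ and $d$ appear only as two of the symmetric upper parameters of the very-well-poised ${}_6W_5^{2}$, and the argument $q^{n+2}a^2/(cd)$ is symmetric as well, so the left-hand side is unchanged under $c\leftrightarrow d$. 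Hence the right-hand side must be too, which gives
\[
\left(\frac{-qa}{c}\right)^n\frac{(qa,c;q)_n}{\left(\frac{qa}{c},\frac{qa}{d};q\right)_n}\qhyp{2}{1}{q^{-n},\frac{q^{-n}d}{a}}{\frac{q^{1-n}}{c}}{q,\frac{q}{d}}
=\left(\frac{-qa}{d}\right)^n\frac{(qa,d;q)_n}{\left(\frac{qa}{d},\frac{qa}{c};q\right)_n}\qhyp{2}{1}{q^{-n},\frac{q^{-n}c}{a}}{\frac{q^{1-n}}{d}}{q,\frac{q}{c}}.
\]
Cancelling the common factor $(-qa)^n(qa;q)_n\big/\left(\frac{qa}{c},\frac{qa}{d};q\right)_n$ and solving for the first ${}_2\phi_1$ leaves precisely the prefactor $\left(\frac{c}{d}\right)^n(d;q)_n/(c;q)_n$ of \eqref{cor5.14.10p}, which completes the proof.

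There is no real obstacle here; the only point worth checking is that the sole nontrivial permutation of $\{c,d\}$ gives a genuine transformation rather than a tautology, which is clear because the two ${}_2\phi_1$'s are not term-by-term equal. Finally, parallel to the remarks after Corollaries~\ref{cor5.8} and~\ref{cor5.14}, I would note that permuting $c,d$ in the other ${}_2\phi_1$ identity \eqref{cor5.14.11} yields nothing new: the substitution $(a,c,d)\mapsto\bigl(q^{-2n}/a,\ q^{-n}c/a,\ q^{-n}d/a\bigr)$ carries $\qhyp{2}{1}{q^{-n},d}{\frac{qa}{c}}{q,\frac{q^{n+1}a}{d}}$ onto the ${}_2\phi_1$ appearing in \eqref{cor5.14.10p}, so that interchange identity coincides with the one established above.
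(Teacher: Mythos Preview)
Your proposal is correct and follows exactly the same approach as the paper: start from \eqref{cor5.14.10}, use the symmetry of the ${}_6W_5^{2}$ in $c,d$, and read off the interchange identity. Your added remark that permuting $c,d$ in \eqref{cor5.14.11} yields nothing new under the substitution $(a,c,d)\mapsto(q^{-2n}/a,\,q^{-n}c/a,\,q^{-n}d/a)$ is also correct and in the spirit of the paper's earlier remarks, though the paper does not state it explicitly after this corollary.
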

\begin{proof}
\noindent
Start with \eqref{cor5.14.10} and consider all permutations of the symmetric parameters $c,d$ which produce non-trivial transformations.
\end{proof}

%%%%%%%%%%%%%%%%%%%%%%%%%%%%%%%%%%%%%%%%%%%%%
\subsection{Terminating \textit{2}-parameter
summation formulas}
\label{twoparamsum}
%%%%%%%%%%%%%%%%%%%%%%%%%%%%%%%%%%%%%%%%%%%%%
By taking the limits of the continuous dual $q$-Hahn 
and the continuous dual $q^{-1}$-Hahn polynomial representations
and transformations as one of the parameters tends to either zero
or infinity, one can obtain summation formulas for a ${}_6W_5$
\cite[(II.21)]{GaspRah}.

\begin{thm}
Let $p_n(x;{\bf a}|q)$ be the continuous 
dual $q$-Hahn polynomial, with $n\in\mathbb N_0$,
${\bf a}=\{a_1,a_2,a_3\}$,
$a_k\in\CC^\ast$,
$k,p\in{\bf 3}$.
Then
\begin{equation}
\lim_{a_p\to\infty}\frac{1}{a_p^n}p_n(x;{\bf a}|q)
=X_n({\bf a}_{[p]}|q)
=q^{\binom{n}{2}}(-1)^n(a_{12};q)_n,
\end{equation}
\begin{equation}
\lim_{a_p\to\infty}
\frac{1}{a_p^n}p_n(x;{\bf a}|q^{-1})
=q^{-3\binom{n}{2}}(-a_{12})^n
X_n({\bf a}_{[p]}^{-1}|q)
=q^{-2\binom{n}{2}}a_{12}^n(a_{12}^{-1};q)_n,
\end{equation}
where
\begin{equation}
\label{Xnform}
X_n({\bf a}_{[p]}|q):=q^{\binom{n}{2}}(-1)^n
\frac{(\{\frac{a_s}{z}\};q)_n}{(z^{-2};q)_n}
\Whyp65{q^{-n}z^2}{q^{-n},\{a_sz\}}{q,\frac{q}{a_{12}}},
\end{equation}
and the indices of ${\bf a}_{[p]}$ have been relabeled such that they are in ${\bf 2}$.
\end{thm}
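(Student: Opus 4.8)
The plan is to obtain this statement purely as a limiting case of the continuous dual $q$-Hahn representations in Corollary \ref{cor:4.1} together with the relation between the continuous dual $q$ and $q^{-1}$-Hahn polynomials recorded in Remark \ref{invcdqirem}. First I would take the representation \eqref{cdqH:def5},
\[
p_n(x;{\bf a}|q)=z^n \frac{\left(\{\tfrac{a_s}{z}\};q\right)_n}{(z^{-2};q)_n}
\Wphyp{7}{6}{-1}{q^{-n}z^2}{q^{-n},\{a_sz\}}{q,\tfrac{q}{a_{123}z}},
\]
divide by $a_p^n$, and send $a_p\to\infty$. On the right-hand side the $q$-Pochhammer factor $(a_p/z;q)_n$ contributes a factor $a_p^n$ which cancels the $1/a_p^n$ out front (using the limit $\lim_{b\to\infty}b^{-n}(xb;q)_n=q^{\binom n2}(-x)^n$ recorded just after \eqref{poch.id:3}, with $x=1/z$), and inside the ${}_7W_6^{-1}$ the only dependence on $a_p$ is through $a_{123}$ in the argument $q/(a_{123}z)$ and through the numerator parameter $a_pz$; the former tends to $0$ while the latter tends to $\infty$. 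Matching this against the ${}_6W_5$ form \eqref{ASC:def7} (i.e.\ using the limit-transition Lemma \ref{leminftyWhyp}, specifically the $\lambda\to\infty$ reduction that drops a top parameter and lowers $p$), the ${}_7W_6^{-1}$ collapses to a ${}_6W_5$ with argument $q/a_{12}$. Collecting the surviving factors gives exactly
\[
\lim_{a_p\to\infty}\frac1{a_p^n}p_n(x;{\bf a}|q)
=q^{\binom n2}(-1)^n\frac{(\{\tfrac{a_s}{z}\};q)_n}{(z^{-2};q)_n}
\Whyp65{q^{-n}z^2}{q^{-n},\{a_sz\}}{q,\tfrac{q}{a_{12}}}=X_n({\bf a}_{[p]}|q),
\]
which is the defining formula \eqref{Xnform}. (Since this is the Askey--Wilson-to-continuous-dual-$q$-Hahn limit restricted once more, one can equivalently cite the known limit from the continuous dual $q$-Hahn to the Al-Salam--Chihara polynomials; the point is that this limit is a renormalized ${}_6W_5$.)

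The second identity is then immediate from Remark \ref{invcdqirem}: applying $q\mapsto q^{-1}$ to the first line and using
$p_n(x;{\bf a}|q^{-1})=q^{-3\binom n2}(-a_{123})^n p_n(x;{\bf a}^{-1}|q)$ gives, upon dividing by $a_p^n$ and letting $a_p\to\infty$ (noting $a_{123}=a_{12}a_p$, so $(-a_{123})^n/a_p^n=(-a_{12})^n$ exactly, with no residual $a_p$-dependence from that prefactor),
\[
\lim_{a_p\to\infty}\frac1{a_p^n}p_n(x;{\bf a}|q^{-1})
=q^{-3\binom n2}(-a_{12})^n\,X_n({\bf a}_{[p]}^{-1}|q).
\]
Finally one must check the closed-form evaluations $X_n({\bf a}_{[p]}|q)=q^{\binom n2}(-1)^n(a_{12};q)_n$ and its $q^{-1}$-reciprocal counterpart. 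This is where the actual computation lives: one recognizes the ${}_6W_5$ appearing in \eqref{Xnform} as an instance of the terminating very-well-poised ${}_6W_5$ summation \cite[(II.21)]{GaspRah}, so that the sum collapses to a ratio of $q$-Pochhammer symbols; after substituting $a=q^{-n}z^2$ and the two numerator parameters $a_sz$ and simplifying with the identities \eqref{poch.id:4}--\eqref{poch.id:6} (the factors $(z^{-2};q)_n$ and the $z$-powers cancel against the Pochhammer symbols coming out of the summation), one is left with $q^{\binom n2}(-1)^n(a_{12};q)_n$, and then \eqref{poch.id:3} converts this to the stated $q^{-1}$-expression $q^{-2\binom n2}a_{12}^n(a_{12}^{-1};q)_n$.

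The main obstacle is the last step: verifying that the specialized ${}_6W_5$ in $X_n$ really does satisfy the hypotheses of the ${}_6W_5$ summation theorem \cite[(II.21)]{GaspRah} and that the resulting $q$-Pochhammer bookkeeping telescopes cleanly to $(a_{12};q)_n$ — in particular one should confirm the very-well-poised structure (the base parameter $q^{-n}z^2$ against the two free numerator parameters $a_1z,a_2z$ with companion parameters $q/(a_1z)\cdot q^{-n}z^2$ etc.) lines up correctly and that the balancing/termination condition forcing $q^{-n}$ to be a numerator entry is met. Everything else is a routine limit-transition argument already packaged in Lemma \ref{leminftyWhyp} and the elementary $q$-Pochhammer limits quoted after \eqref{poch.id:3} and \eqref{poch.id:5}.
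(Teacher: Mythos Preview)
Your derivation of the ${}_6W_5$ form of $X_n$ from \eqref{cdqH:def5} via Lemma \ref{leminftyWhyp} is exactly the paper's approach; the one-line proof there (``applying the necessary limits using Lemma \ref{leminftyWhyp}'') is intended to be applied to \emph{several} of the representations in Corollaries \ref{cor:4.1} and \ref{cor:4.11} simultaneously. In particular, letting $a_t\to\infty$ in \eqref{cdqH:def3} (the argument $q/(a_tz)$ of the ${}_3\phi_2$ goes to zero, so only the $k=0$ term survives, while $(a_t/z;q)_n/a_t^n\to q^{\binom{n}{2}}(-1/z)^n$) yields the closed form $q^{\binom{n}{2}}(-1)^n(a_{12};q)_n$ directly. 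The paper therefore obtains the evaluation of $X_n$ by comparing two limits of the same polynomial, and records the resulting ${}_6W_5$ summation afterwards as Corollary \ref{cor5.6}. You instead cite \cite[(II.21)]{GaspRah} up front; this is equally valid and avoids the bookkeeping, though in the paper's internal logic the summation is an output rather than an input.

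Two genuine inaccuracies to flag. First, your reference to \eqref{ASC:def7} is misplaced: that is a ${}_6W_5^{-2}$, whereas the limit of the ${}_7W_6^{-1}$ via \eqref{Whypliminfty1} lands on a ${}_6W_5^{0}={}_6W_5$, as you correctly write out. Second, and more substantively, the relation $p_n(x;{\bf a}|q^{-1})=q^{-3\binom{n}{2}}(-a_{123})^n p_n(x;{\bf a}^{-1}|q)$ that you invoke for the second equation is not what Remark \ref{invcdqirem} says, and in fact is \emph{false} for the continuous dual $q$-Hahn polynomials: comparing \eqref{cdqiH:1} with \eqref{cdqH:def1} applied to ${\bf a}^{-1}$, the two ${}_3\phi_2$'s carry different arguments ($q^n a_p/a_{123}$ versus $q$), so no scalar multiple relates them. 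The Askey--Wilson identity you have in mind degenerates when one parameter is sent to zero. The second equation of the theorem is obtained either by running the same limits directly on the $q^{-1}$ representations in Corollary \ref{cor:4.11}, or by applying $q\mapsto q^{-1}$ to the already-established closed form $q^{\binom{n}{2}}(-1)^n(a_{12};q)_n$ and simplifying with \eqref{poch.id:3}; no functional relation between $p_n(x;{\bf a}|q^{-1})$ and $p_n(x;{\bf a}^{-1}|q)$ is needed.
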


\begin{proof}
Applying the necessary
limits using Lemmas \ref{leminftyWhyp} 
completes the proof. 
\end{proof}

\begin{cor}{\cite[(II.21)]{GaspRah}}
\label{cor5.6}
Let $n\in\mathbb N_0$, $a,c,d\in\CCast$, 
$q\in\CCdag$.
%such that $|q|\ne 1$. 
Then, one 
has the following transformation and summation formulas for a terminating 
${}_{6}W_{5}$:
\begin{eqnarray} 
&&\Whyp{6}{5}{a}{q^{-n},c,d}{q,\frac{q^{n+1}a}{cd}}\nonumber\\
&&\hspace{1cm}=
q^{\binom{n}{2}}\left(\frac{-qa}{cd}\right)^n\frac{(qa,a,c,d;q)_n}
{(a;q)_{2n}(\frac{qa}{c},\frac{qa}{d};q)_n}\Whyp{6}{5}{\frac{q^{-2n}}{a}}
{q^{-n},\frac{q^{-n}c}{a},\frac{q^{-n}d}{a}}
{q,\frac{q^{n+1}a}{cd}}\nonumber\\
&&\hspace{1cm}=\frac{(qa,\frac{qa}{cd};q)_n}
{(\frac{qa}{c},\frac{qa}{d};q)_n}
.
\end{eqnarray}
\end{cor}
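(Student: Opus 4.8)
The plan is to deduce the summation (third line $=$ first line) directly from the theorem immediately preceding this corollary, and then to obtain the transformation (first line $=$ middle line) from it by algebra alone; the evaluation is in any case classical, namely \cite[(II.21)]{GaspRah}, so the argument below merely records how it fits inside the ${}_7W_6^{-1}$-representation framework already developed.

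\textbf{Step 1: the summation.} In the preceding theorem the quantity $\lim_{a_p\to\infty}a_p^{-n}p_n(x;{\bf a}|q)$ is evaluated in two ways. Passing to the limit in the ${}_3\phi_2$ representation \eqref{cdqH:def1} (using the $q$-binomial theorem and \eqref{poch.id:5}) gives $q^{\binom n2}(-1)^n(a_{12};q)_n$; passing to the limit in the ${}_7W_6^{-1}$ representation \eqref{cdqH:def5}, where Lemma~\ref{leminftyWhyp}, \eqref{Whypliminfty1} is applied to send the parameter $a_pz\to\infty$ while the argument tends to $0$, gives $q^{\binom n2}(-1)^n\tfrac{(\{a_s/z\};q)_n}{(z^{-2};q)_n}{}_6W_5\!\left(q^{-n}z^2;q^{-n},\{a_sz\};q,q/a_{12}\right)$. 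Equating the two and performing the invertible substitution $z^2=q^na$, $a_1z=c$, $a_2z=d$ — under which the base of the ${}_6W_5$ becomes $a$, its argument becomes $q^{n+1}a/(cd)$, and $a,c,d$ range freely over $\CCast$ — yields
\[
{}_6W_5\!\left(a;q^{-n},c,d;q,\tfrac{q^{n+1}a}{cd}\right)=\frac{(q^{-n}/a;q)_n\,(q^{-n}cd/a;q)_n}{(q^{-n}c/a,\,q^{-n}d/a;q)_n}.
\]
Rewriting each factor on the right via \eqref{poch.id:5} (the accompanying powers of $q$, of $-1$, and of $a,c,d$ all cancel) turns the right-hand side into $(qa,qa/(cd);q)_n/(qa/c,qa/d;q)_n$, which is the asserted evaluation.

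\textbf{Step 2: the transformation.} The point is that the ${}_6W_5$ on the right of the middle line, with base $q^{-2n}/a$ and parameters $q^{-n}$, $q^{-n}c/a$, $q^{-n}d/a$, is itself in summable configuration: one checks $q^{n+1}a/(cd)=q^{n+1}(q^{-2n}/a)\big/\bigl((q^{-n}c/a)(q^{-n}d/a)\bigr)$. Hence Step 1 applies to it as well and evaluates it as $(q^{1-2n}/a,\,qa/(cd);q)_n/(q^{1-n}/c,\,q^{1-n}/d;q)_n$. Multiplying this by the prefactor $q^{\binom n2}(-qa/(cd))^n(qa,a,c,d;q)_n\big/\bigl((a;q)_{2n}(qa/c,qa/d;q)_n\bigr)$, using $(a;q)_{2n}=(a;q)_n(q^na;q)_n$ from \eqref{poch.id:4} and converting $(q^{1-2n}/a;q)_n$, $(q^{1-n}/c;q)_n$, $(q^{1-n}/d;q)_n$ by \eqref{poch.id:5}, one verifies that everything except $(qa,qa/(cd);q)_n/(qa/c,qa/d;q)_n$ cancels, so the middle line equals the third and hence the first. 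Alternatively, the middle line is precisely the $r=5$ instance of Corollary~\ref{cor:2.5} with $z=q^{n+1}a/(cd)$: that choice makes the transformed argument $q^{2n+2}a^2\big/\bigl((cd)^2z\bigr)$ equal to $q^{n+1}a/(cd)$ again, while the prefactor there reduces to the stated one on using \eqref{poch.id:7} and \eqref{poch.id:9} to write $(\pm q\sqrt a,a;q)_n/(\pm\sqrt a,q^{n+1}a;q)_n=(a,qa;q)_n/(a;q)_{2n}$, together with the identity $\binom n2+n=\binom{n+1}2$.

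\textbf{Expected obstacle.} Everything except one point is routine $q$-Pochhammer bookkeeping. The place requiring genuine care is the limit transition in Step 1 (buried in the preceding theorem): one must verify that, as $a_p\to\infty$, the argument of the surviving ${}_6W_5$ stabilizes at $q/a_{12}$ (equivalently $q^{n+1}a/(cd)$) rather than running off to $0$ or $\infty$, and that the factor $(a_pz;q)_n\sim q^{\binom n2}(-a_pz)^n$ exactly absorbs the normalizing $a_p^{-n}$ so that a finite nonzero limit survives.
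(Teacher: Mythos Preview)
Your proof is correct. The paper's own argument is more compressed: it simply lets $e\to\infty$ in Corollary~\ref{cor:4.3} (and $e\to0$ in Corollary~\ref{cor:4.13}) via Lemma~\ref{leminftyWhyp}. Because those corollaries already equate the ${}_7W_6^{-1}$ with its inverted form \eqref{cor4.3:r2} \emph{and} with expressions such as \eqref{cor4.3:r4a} whose ${}_3\phi_2$ collapses to $1$ in the limit (the argument $q/e\to0$ while the vanishing denominator parameter is harmless), all three lines of the present corollary drop out simultaneously. Your route instead pulls the summation from the polynomial-limit theorem immediately preceding---in effect re-running that theorem's proof for the two representations \eqref{cdqH:def1} and \eqref{cdqH:def5}---and then recovers the middle line separately, either by applying the summation to both ${}_6W_5$'s or by invoking the $r=5$ case of Corollary~\ref{cor:2.5}. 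Both arguments are the same limit from the three-parameter level; the paper's packaging is more economical, while yours has the virtue of making explicit that the transformation line is a purely formal consequence of the summation (and of the general inversion), independent of the rest of Corollary~\ref{cor:4.3}.
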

\begin{proof}
Starting with Corollaries \ref{cor:4.3},
\ref{cor:4.13} and letting 
$e\to\infty,0$ respectively
using Lemma \ref{leminftyWhyp} 
completes the proof.
\end{proof}

%%%%%%%%%%%%%%%%%%%%%%%%%%%%%%%%%%%%%%%%%%%%%
\section{The continuous big \textit{q} and \textit{q}\textsuperscript{-1}
\!\!-Hermite polynomials}
\label{cbqH}
%%%%%%%%%%%%%%%%%%%%%%%%%%%%%%%%%%%%%%%%%%%%%
The continuous big $q$-Hermite polynomials $H_n(x;a|q)$ are a family 
of polynomials which can be obtained from the Askey--Wilson 
polynomials by taking three of its free parameters to be zero. It can
also be obtained from the continuous dual $q$-Hahn polynomials by taking two 
of its free parameters to be zero, or from the Al-Salam--Chihara 
polynomials by taking one of its parameters to be zero.
%%%%%%%%%%%%%%%%%%%%%%%%%%%%%%%%%%%%%%%%%%%%%
\subsection{Terminating continuous big \textit{q} and \textit{q}\textsuperscript{-1}
\!\!-Hermite polynomial representations}
%%%%%%%%%%%%%%%%%%%%%%%%%%%%%%%%%%%%%%%%%%%%%

\noindent One has the following complete list of terminating ${}_5W_4^{-3}$ and equivalent terminating basic hypergeometric representations for the continuous big $q$-Hermite polynomials. 

\begin{cor} 
\label{corcbqH}
Let 
$n\in\mathbb N_0$, 
$q\in\CCdag$,
$a, z\in\CCast$,
$x=\frac12(z+z^{-1})\in\CCast$.
The continuous big $q$-Hermite polynomials are given by:
\begin{eqnarray}
\label{cbqH:def1}
&&\hspace{-5.9cm}H_n(x;a|q)
:=a^{-n} \qphyp{3}{0}{2}{q^{-n}, az^\pm }{-}{q,q}\\
\label{cbqH:def2} &&\hspace{-4cm}= 
q^{-\binom{n}{2}} (-a)^{-n}
(az^\pm;q)_n 
\qhyp12{q^{-n}}{\frac{q^{1-n}z^\pm}{a}}{q,\frac{q^{2-n}}{a^2}} \\
\label{cbqH:def3}
&&\hspace{-4cm}=z^{-n}(az;q)_n 
\qphyp{1}{1}{-1}{q^{-n}}{\frac{q^{1-n}}{az}}{q,\frac{qz}{a}}\\
\label{cbqH:def3b}
&&\hspace{-4cm}=z^n
\frac{(\frac{a}{z};q)_n}
{(\frac{q}{az};q)_\infty}
\qhyp11{\frac{qz}{a}}{\frac{q^{1-n}z}{a}}{q,\frac{q^{1-n}}{az}}\\
\label{cbqH:def4}
&&\hspace{-4cm}=z^n \qhyp{2}{0}{q^{-n}, az }{-}{q,\frac{q^n}{z^2}}\\
\label{cbqH:def5}
\hspace{-0.9cm}
&&\hspace{-4cm}=
z^n
\frac{(\frac{a}{z};q)_n}{(z^{-2};q)_n}
\Wphyp{5}{4}{-3}{q^{-n}z^2}
{q^{-n}, az}
{q,\frac{q^{2n-1}}{az^3}}.
\end{eqnarray}
\end{cor}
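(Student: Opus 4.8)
The plan is to derive each of the six representations of $H_n(x;a|q)$ as a limiting case of the corresponding continuous dual $q$-Hahn polynomial representation in Corollary \ref{cor:4.1}, following exactly the pattern already used to pass from the Askey--Wilson to the continuous dual $q$-Hahn level. Concretely, starting from $p_n(x;\{a_1,a_2,a_3\}|q)$ I would take $a_2\to0$ and $a_3\to0$ (relabeling the surviving parameter as $a_1=a$), since the continuous big $q$-Hermite polynomials are the two-parameter-deletion limit of the continuous dual $q$-Hahn family. For the $_3\phi_2$ representations \eqref{cdqH:def1}--\eqref{cdqH:def4} these limits are routine: one uses the zero-parameter notation \eqref{topzero}--\eqref{botzero} together with the elementary limits recorded right after \eqref{poch.id:3} (namely $\lim_{a\to0}a^n(x/a;q)_n=q^{\binom n2}(-x)^n$) and \eqref{lambdamu}. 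Thus \eqref{cbqH:def1} comes from \eqref{cdqH:def1} with both deleted $a_{ps}\to0$ in the numerator and denominator (producing the two extra zeros in ${}_3\phi_0^2$); \eqref{cbqH:def2} from \eqref{cdqH:def2}; \eqref{cbqH:def4} from \eqref{cdqH:def3} with $a_r,a_t\to0$; and \eqref{cbqH:def3} from \eqref{cdqH:def4} after sending the two non-$a_t$ parameters to zero, which turns the ${}_3\phi_2$ into a ${}_1\phi_1^{-1}$.

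For the very-well-poised representation \eqref{cbqH:def5} I would start from \eqref{cdqH:def5}, which is the ${}_7W_6^{-1}$ with argument $q/(a_{123}z)$; sending $a_2,a_3\to0$ (i.e. the two parameters other than $a=a_1$) and applying the $W$-limit transitions of Lemma \ref{leminftyWhyp} collapses ${}_7W_6^{-1}$ to ${}_5W_4^{-3}$ with the stated argument $q^{2n-1}/(az^3)$, while the prefactor $z^n(\{a_s/z\};q)_n/(z^{-2};q)_n$ degenerates to $z^n(a/z;q)_n/(z^{-2};q)_n$. I expect this to be mechanical once the bookkeeping of which parameter plays the role of $a_{r+1}$ in \eqref{Whypliminfty1}--\eqref{Whyplimzero3} is fixed.

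The one genuinely different item is \eqref{cbqH:def3b}, the ${}_1\phi_1$ representation with the infinite product $(q/(az);q)_\infty$ in the denominator and the non-terminating-looking series $\qhyp11{qz/a}{q^{1-n}z/a}{q,q^{1-n}/(az)}$; there is no ${}_3\phi_2$ analogue of it at the continuous dual $q$-Hahn level, so it cannot come from a bare limit of Corollary \ref{cor:4.1}. The natural route is to obtain it from \eqref{cbqH:def3} by an inversion/reversal of the terminating ${}_1\phi_1^{-1}$: apply Lemma \ref{lem:2.3} (or Corollary \ref{cor:2.4}) to reverse the order of summation in \eqref{cdqH:def4}'s descendant, or equivalently use \eqref{poch.id:6} to rewrite $(q^{-n}\cdot;q)_k$ factors, which converts the terminating series in $z/a$ into a series whose $k$-th term involves $(qz/a;q)_k/(q^{1-n}z/a;q)_k$; the infinite product $(q/(az);q)_\infty$ then appears precisely as the normalization $(a/z;q)_n=(a/z;q)_\infty/(q^n a/z;q)_\infty$ is traded against $(q/(az);q)_\infty$ via \eqref{poch.id:5}. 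I anticipate this reversal — matching the exact power of $q$ and the exact infinite-product factor — to be the main obstacle; everything else is a direct specialization. In the write-up I would simply state that \eqref{cbqH:def1} follows from \eqref{cdqH:def1} by mapping $a_k\mapsto0$ for $k\neq p$, \eqref{cbqH:def2} from \eqref{cdqH:def2}, \eqref{cbqH:def4} and \eqref{cbqH:def3} from \eqref{cdqH:def3}--\eqref{cdqH:def4}, \eqref{cbqH:def5} from \eqref{cdqH:def5} using Lemma \ref{leminftyWhyp}, and \eqref{cbqH:def3b} from \eqref{cbqH:def3} by applying the inversion identity of Lemma \ref{lem:2.3} together with \eqref{poch.id:5}, relabeling indices so that the surviving parameter lies in the appropriate index set.
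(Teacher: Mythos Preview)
Your overall strategy---obtaining each representation as a limit of the continuous dual $q$-Hahn level---is sound and essentially equivalent to the paper's, which instead passes through the intermediate Al-Salam--Chihara Corollary~\ref{cor:5.1} and takes a single limit at each step. Since Corollary~\ref{cor:5.1} is itself obtained from Corollary~\ref{cor:4.1} by one such limit, your two-limits-at-once route and the paper's one-limit route coincide for \eqref{cbqH:def1}, \eqref{cbqH:def2}, \eqref{cbqH:def3}, \eqref{cbqH:def4}, and \eqref{cbqH:def5}; the only difference is bookkeeping.

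The genuine gap is in your treatment of \eqref{cbqH:def3b}. Note that the ${}_1\phi_1$ there has numerator parameter $qz/a$, not $q^{-n}$, so the series is \emph{not} terminating, and the prefactor carries the infinite product $(q/(az);q)_\infty$. Lemma~\ref{lem:2.3} (and Corollary~\ref{cor:2.4}) is a reversal-of-summation identity for \emph{terminating} series: it always outputs another finite sum, never an infinite product times a nonterminating series. Your sketch with \eqref{poch.id:6} does not escape this, since applying that identity to $(q^{-n};q)_k$ introduces $(1;q)_k$, which vanishes for $k\ge 1$. So inversion cannot produce \eqref{cbqH:def3b} from \eqref{cbqH:def3}. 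The paper instead derives \eqref{cbqH:def3b} from \eqref{cbqH:def3} by applying the known nonterminating transformation \cite[(1.13.6)]{Koekoeketal}, which is precisely the kind of Heine-type identity that trades a ${}_2\phi_1$ with a vanishing numerator parameter for an infinite product times a ${}_1\phi_1$; that is the missing ingredient in your plan.
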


%\noindent\noro{[HSC: (6.5) $\to 8\phi4$, but a different one! Maybe coincidence.]}

\begin{proof}
\eqref{cbqH:def1} is derived by taking \eqref{ASC:def1} and mapping $a_2 \mapsto 0$
(see also \cite[(14.18.1)]{Koekoeketal}),
\eqref{cbqH:def2} is derived by taking \eqref{ASC:def2} and mapping $a_2 \mapsto 0$, 
\eqref{cbqH:def3} is derived by taking \eqref{ASC:def3} and mapping $a_2 \mapsto 0$, 
\eqref{cbqH:def3b} is derived from
\eqref{cbqH:def3} using 
\cite[(1.13.6)]{Koekoeketal},
\eqref{cbqH:def4} is derived by taking \eqref{ASC:def4} or \eqref{ASC:def5} and 
mapping $a_r \mapsto 0$
(see also \cite[(14.18.1)]{Koekoeketal}),
\eqref{cbqH:def5} is derived by taking by taking \eqref{ASC:def6} and 
mapping $a_2 \mapsto 0$. 
\end{proof}
%%%%%%%%%%%%%%%%%%%%%%%%%%%%%%%%%%%%%%%%%%%%%%%%%%%%%%%%%%%%%%%%%%%%%%%%%%%%%%%%%%%
%\subsection{Terminating continuous big $q^{-1}$-Hermite polynomial representations}
%%%%%%%%%%%%%%%%%%%%%%%%%%%%%%%%%%%%%%%%%%%%%%%%%%%%%%%%%%%%%%%%%%%%%%%%%%%%%%%%%%%

\noindent One has the following complete list of terminating ${}_5W_4^{3}$ and equivalent terminating basic hypergeometric representations for the continuous big $q^{-1}$-Hermite polynomials. 

\begin{cor}
\label{corcbqiH}
Let $H_n(x;a|q)$ and the respective parameters be defined as previously. 
Then, the continuous big $q^{-1}$-Hermite polynomials are given by:
\begin{eqnarray}
\label{cbqiH:1} &&\hspace{-4.3cm}H_n(x;a|q^{-1}) =a^{-n}\qhyp{3}{0}{q^{-n},\frac{z^{\pm}}
{a}}{-}{q,q^na^2}\\ \label{cbqiH:2}&&\hspace{-2.0cm}=q^{-\binom{n}{2}}(-a)^{n} 
\left(\frac{z^{\pm}}{a};q\right)_n \qphyp{1}{2}{-2}{q^{-n}}
{q^{1-n}az^{\pm}}{q,q}\\ \label{cbqiH:3}&&\hspace{-2.0cm}=
q^{-\genfrac{(}{)}{0pt}{}{n}{2}}(-a)^n
\left(\frac{1}{az};q\right)_n \qhyp{1}{1}{q^{-n}}
{q^{1-n}az}{q,qz^{2}}\\ \label{cbqiH:4} &&\hspace{-2.0cm}
=z^n\qphyp{2}{0}{1}{q^{-n},\frac{1}{az}}{-}
{q,\frac{qa}{z}}\\ 
&&\hspace{-2.0cm}=(a z^{2})^n \frac{(\frac{z}
{a};q)_n}{(z^{2};q)_n}
\Wphyp{5}{4}{3}{\frac{q^{-n}}{z^2}}
{q^{-n},{\frac{1}{az}}}
{q,\frac{q^{2-n}a}{z^3}}.
\label{cbqiH:5} 
\end{eqnarray}
\end{cor}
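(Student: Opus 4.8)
The plan is to derive the five representations \eqref{cbqiH:1}--\eqref{cbqiH:5} from the five terminating continuous big $q$-Hermite representations \eqref{cbqH:def1}, \eqref{cbqH:def2}, \eqref{cbqH:def3}, \eqref{cbqH:def4}, \eqref{cbqH:def5} of Corollary \ref{corcbqH} by the substitution $q\mapsto q^{-1}$, exactly mirroring the passage from Corollary \ref{cor:4.1} to Corollary \ref{cor:4.11} for the continuous dual $q$-Hahn family and from Corollary \ref{cor:5.1} to Corollary \ref{cor:5.9} for the Al-Salam--Chihara family. Since $\CCdag$ is closed under inversion, the standing hypotheses survive the substitution; note also that the lone non-terminating representation \eqref{cbqH:def3b} has no terminating $q^{-1}$-analogue, which is why Corollary \ref{corcbqiH} lists one representation fewer.

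For each fixed representation I would proceed in three steps. First, replace $q$ by $q^{-1}$ everywhere: in the explicit $q$-Pochhammer prefactors, in the numerator and denominator parameters and the argument of the basic hypergeometric series, and in the summation kernel \eqref{2.11}--\eqref{2.12}. Second, apply \eqref{poch.id:3}, i.e.\ $(b;q^{-1})_m=(b^{-1};q)_m(-b)^mq^{-\binom m2}$, to every $(\cdot\,;q^{-1})$-Pochhammer symbol---both the $n$-indexed prefactors and the $k$-indexed factors inside the series---leaving the zero parameters untouched, since $(0;q^{-1})_m=(0;q)_m=1$. Third, collect the resulting powers of $q^{\pm\binom n2}$ and $q^{\pm\binom k2}$, the signs, and the powers of $a$ and $z$. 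Inside the series this converts a terminating ${}_r\phi_s$ on base $q^{-1}$ into one on base $q$ with reciprocated parameters and a rescaled argument (compare the $q\leftrightarrow q^{-1}$ identity \eqref{qtopiden}); in the van de Bult--Rains notation \eqref{topzero}--\eqref{botzero}, \eqref{topWzero} the change in the count of zero parameters is recorded merely as a shift of the superscript, so that the ${}_3\phi_0^2$ in \eqref{cbqH:def1} becomes the ${}_3\phi_0$ in \eqref{cbqiH:1}, the ${}_1\phi_1^{-1}$ in \eqref{cbqH:def3} becomes the ${}_1\phi_1$ in \eqref{cbqiH:3}, the ${}_2\phi_0$ in \eqref{cbqH:def4} becomes the ${}_2\phi_0^1$ in \eqref{cbqiH:4}, and the very-well-poised ${}_5W_4^{-3}$ in \eqref{cbqH:def5} becomes the ${}_5W_4^{3}$ in \eqref{cbqiH:5}---with \eqref{topWzero} guaranteeing that reciprocating the leading parameter of a very-well-poised series preserves its very-well-poisedness.

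The only genuine work is the exponent bookkeeping: every converted $q$-Pochhammer symbol of length $m$ emits its own factor $(-b)^m q^{-\binom m2}$, so the prefactor powers of $q^{\binom n2}$, of $-a$, of $z$ and $az^2$, and the various $q$-Pochhammer symbols, together with the arguments $q^na^2$, $qz^2$, $qa/z$, $q^{2-n}a/z^3$, must be checked to assemble exactly as written. I would sanity-check each identity at $n=0,1$, and pay particular attention to the most delicate line \eqref{cbqiH:5}, where both the prefactor $(az^2)^n(z/a;q)_n/(z^2;q)_n$ and the argument $q^{2-n}a/z^3$ have to emerge from simplifying the $q\mapsto q^{-1}$ image of \eqref{cbqH:def5}. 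No input beyond \eqref{poch.id:3}, the definitions \eqref{2.11}--\eqref{2.12}, and the zero-parameter conventions \eqref{topzero}--\eqref{botzero}, \eqref{topWzero} is required.
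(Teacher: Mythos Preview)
Your proposal is correct and follows essentially the same approach as the paper, which states only that each $q^{-1}$ representation is derived from the corresponding representation in Corollary~\ref{corcbqH} by applying the map $q\mapsto 1/q$ and using \eqref{poch.id:3}. Your write-up is considerably more explicit about the mechanics (the treatment of zero parameters, the shift in the van de Bult--Rains superscript, and why \eqref{cbqH:def3b} has no terminating $q^{-1}$-analogue), but the underlying argument is identical.
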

\begin{proof}
Each $q^{-1}$ representation is derived from the corresponding representation 
by applying the map $q\mapsto 1/q$ and using \eqref{poch.id:3}.
\end{proof}

%%%%%%%%%%%%%%%%%%%%%%%%%%%%%%%%%%%%%%%%%%%%%
\subsection{Terminating \textit{1}-parameter $q$ and $q^{-1}$ transformations}
%%%%%%%%%%%%%%%%%%%%%%%%%%%%%%%%%%%%%%%%%%%%%

\noindent By studying the interrelations of the terminating basic hypergeometric functions in Corollary \ref{corcbqH} we obtain the following transformation result for a terminating ${}_5W_4^{-3}$.

\begin{cor}
\label{cor6.6a}
Let $n\in\mathbb N_0$, $a,c\in\CCast$, 
$q\in\CCdag$.
%such that $|q|\ne 1$.
Then, one has the following transformation formulas for a terminating 
${}_{5}W_4^{-3}$:
\begin{eqnarray}
&&\hspace{-1cm}
\Wphyp{5}{4}{-3}{a}{q^{-n},c}{q,\frac{q^{n-1}}{ac}}\\ 
&&\hspace{0cm}=q^{-2\binom{n}{2}}\left(\frac{1}
{qac}\right)^n\!\!\dfrac{\left(qa,a, c;q\right)_n}
{(a;q)_{2n}\!\left(\frac{qa}{c};q\right)_n} 
\Wphyp{5}{4}{-3}{\frac{q^{-2n}}{a}}{q^{-n},\frac{q^{-n}c}{a}}
{q,\frac{q^{4n-1}a^2}{c}}\\
&&\hspace{0cm}=\left(\frac{1}{c}\right)^{2n}\frac{\left(qa;q\right)_n}
{\left(\frac{qa}{c};q\right)_n}\qphyp{3}{0}{2}{q^{-n},\frac{q^{-n}c}{a},c}{-}{q,q}\\
&&\hspace{0cm}=q^{-2\binom{n}{2}}
\left(\frac{1}{qac}\right)^n
\dfrac{\left(qa; q\right)_n}{\left(\frac{qa}{c};q\right)_n} 
\qhyp{2}{0}{q^{-n},\frac{q^{-n}c}{a}, }{-}{q,q^{2n}a}\\
&&\hspace{0cm}=\left(\frac{1}{c}\right)^n\frac{\left(qa;q\right)_n}
{\left(\frac{qa}{c};q\right)_n}\qhyp{2}{0}{q^{-n},c}{-}{q,\frac{1}{a}}\\
&&\hspace{0cm}=
q^{-2\binom{n}{2}}\left(\frac{1}{qac}\right)^n\left(qa,c;q\right)_n
\qhyp12{q^{-n}}{\frac{q^{1-n}}{c},\frac{qa}{c}}{q,\frac{q^2a}{c^2}}\\
&&\hspace{0cm}=q^{-\binom{n}{2}}\left(\frac{-1}{qa}\right)^n
(qa;q)_n\qphyp{1}{1}{-1}{q^{-n}}{\frac{qa}{c}}{q,\frac{q}{c}}\\
&&\hspace{0cm}=q^{-2\binom{n}{2}}\left(\frac{1}{qac}\right)^n
\frac{(qa,c;q)_n}{(\frac{qa}{c};q)_n}\qphyp{1}{1}{-1}{q^{-n}}{\frac{q^{1-n}}{c}}{q,\frac{q^{n+1}a}{c}}
\end{eqnarray}
\end{cor}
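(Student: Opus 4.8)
The plan is to derive the whole chain of eight identities as the $d\to 0$ limit of Corollary~\ref{cor:5.8}, exactly as Corollary~\ref{cor:5.8} itself was obtained from Corollary~\ref{cor:4.3} by letting $e\to 0$. The continuous big $q$-Hermite polynomials arise from the Al-Salam--Chihara polynomials by sending one of the two symmetric parameters to zero, so each of the eleven identities in Corollary~\ref{cor:5.8} collapses onto one of the eight asserted here, with several Al-Salam--Chihara identities merging into a single Hermite identity (this is why the list shortens from eleven lines to eight). I would proceed line by line. For a very-well-poised piece in which the parameter $a$ is held fixed, any argument or bottom parameter that diverges as $d\to 0$ is rewritten in the template form $z/\epsilon$ or $a_{r+1}/\epsilon$ with $\epsilon=d$ and the $\epsilon\to 0$ cases \eqref{Whyplimzero1}, \eqref{Whyplimzero3} of Lemma~\ref{leminftyWhyp} are applied; for the ${}_r\phi_s$ pieces, and for the very-well-poised pieces in which $a$ itself diverges (which must first be rewritten in the ${}_{r+1}\phi_r$ form \eqref{rpWr}), the elementary limit transitions \cite[(1.10.3-5)]{Koekoeketal} do the same job. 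For instance, writing the argument of the ${}_6W_5^{-2}$ in \eqref{cor:5.8.1} as $(q^n/c)/d$ and invoking \eqref{Whyplimzero1} produces precisely ${}_5W_4^{-3}(a;q^{-n},c;q,q^{n-1}/(ac))$, the left-hand side of the corollary, with the superscript $p$ dropping from $-2$ to $-3$.

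Next I would dispose of the constant prefactors. The only $q$-Pochhammer factor that misbehaves is $(qa/d;q)_n$, which by \eqref{poch.id:5} satisfies $(qa/d;q)_n\sim(-qa/d)^n q^{\binom{n}{2}}$ as $d\to 0$; its divergence is cancelled exactly by the factor $d^{-n}$ produced when a blown-up argument is rescaled through \eqref{Whyplimzero1}, or by the matching divergence in another prefactor via \eqref{lambdamu}, so that every surviving constant is finite. The factor $(d;q)_n$ tends to $1$, and the $(-1/(cd))^n$-type constants combine with the residual powers of $d$ to give exactly the $q^{\binom{n}{2}}$-powers, signs and $a,c$-dependence displayed. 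Carried out term by term this converts ${}_6W_5^{-2}\to{}_5W_4^{-3}$, ${}_6W_5^{2}\to{}_3\phi_0^2$, ${}_3\phi_1^1\to{}_3\phi_0^2$, ${}_2\phi_2\to{}_1\phi_2$, ${}_2\phi_2^{-1}\to{}_1\phi_1^{-1}$, ${}_2\phi_1\to{}_2\phi_0$, and so on; and since the distinct Al-Salam--Chihara formulas were all equal before the limit, their images are automatically equal afterwards, which both produces the list and verifies its internal consistency.

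An equivalent, self-contained route avoids Corollary~\ref{cor:5.8}: start from Corollary~\ref{corcbqH}, perform the substitution $z^2\mapsto q^n a$ and $a\mapsto q^{-n/2}c/\sqrt{a}$ (abusing notation so that the right-hand symbols are those of the present corollary), so that the ${}_5W_4^{-3}$ of \eqref{cbqH:def5} becomes ${}_5W_4^{-3}(a;q^{-n},c;q,q^{n-1}/(ac))$, divide each of \eqref{cbqH:def1}--\eqref{cbqH:def5} by the common normalization $z^n(a/z;q)_n/(z^{-2};q)_n$, and read off the identities, in the same spirit as the closing remark in the proof of Corollary~\ref{cor:4.3} and the proof of Corollary~\ref{cor:4.13}. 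I expect the main obstacle to be purely organizational bookkeeping: there is no single limit template covering all eight lines, so each series must be matched to the correct case of Lemma~\ref{leminftyWhyp} or \cite[(1.10.3-5)]{Koekoeketal}, one must decide which argument needs rescaling before passing to the limit, one occasionally has to pass through the ${}_{r+1}\phi_r$ form to handle a very-well-poised parameter that itself diverges, and one must check that the $\binom{n}{2}$-exponents and signs reassemble correctly once the divergent $(qa/d;q)_n$ has been absorbed. Once that pairing of series with limit templates is fixed, the remaining computation is mechanical.
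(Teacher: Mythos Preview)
Your proposal is correct and takes essentially the same approach as the paper: the paper's proof is the single sentence ``Start with Corollary~\ref{cor:5.8}, then take the limit as $d\to 0$,'' and your detailed term-by-term analysis of how each series and prefactor behaves under this limit is exactly the bookkeeping that this one-line proof leaves implicit. Your alternative route via Corollary~\ref{corcbqH} with the substitution $z^2\mapsto q^na$, $a\mapsto q^{-n/2}c/\sqrt{a}$ is also valid and mirrors the alternative derivations mentioned in the proofs of Corollaries~\ref{cor:4.3} and~\ref{cor:4.13}.
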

\begin{proof}
Start with Corollary \ref{cor:5.8}, then take the limit as $d\to 0$. This
completes the proof.
\end{proof}

%%%%%%%%%%%%%%%%%%%%%%%%%%%%%%%%%%%%%%%%%%%%%%%%%%%%%%%%%%%%%%%%%%%%%%%%%%%%%%%%%%%
%\subsection{Terminating \textit{1}-parameter $q^{-1}$ transformations}
%%%%%%%%%%%%%%%%%%%%%%%%%%%%%%%%%%%%%%%%%%%%%%%%%%%%%%%%%%%%%%%%%%%%%%%%%%%%%%%%%%%

\noindent By studying the interrelations of the terminating basic hypergeometric functions in Corollary \ref{corcbqiH} we obtain the following transformation result for a terminating ${}_5W_4^{3}$.

\begin{cor}
\label{cor6.11}
Let $n\in\mathbb N_0$, $a,c\in\CCast$, $q\in\CCdag$. 
Then, one has 
the following transformation
formulas for a terminating ${}_5W_{4}^3$:
\begin{eqnarray}
&&\hspace{-0.8cm}
\label{cor6.11.1}
\Wphyp{5}{4}{3}{a}{q^{-n},c}{q,\frac{q^{n+2}a^2}{c}}
\\ &&=\label{cor6.11.2}q^{4\binom{n}{2}}\left(\frac{q^2a^2}{c}\right)^n
\frac{(qa,a,c;q)_n}{(a;q)_{2n}(\frac{qa}{c};q)_n}
\Wphyp{5}{4}{3}{\frac{q^{-2n}}{a}}{q^{-n},\frac{q^{-n}c}{a}}
{q,\frac{q^{2-2n}}{ac}}\\ 
&&=c^n\frac{\left(qa;q\right)_n}{(\frac{qa}{c};q)_n}
\qhyp{3}{0}{q^{-n},\frac{q^{-n}c}{a},c}{-}{q,\frac{q^{2n}a}{c^2}}\\
&&=q^{2\binom{n}{2}}(qa)^n\frac{\left(qa;q\right)_n}{(\frac{qa}{c};q)_n}
\qphyp{2}{0}{1}{q^{-n},\frac{q^{-n}c}{a}}{-}{q,\frac{q}{c}}\\
&&=\frac{\left(qa;q\right)_n}{(\frac{qa}{c};q)_n}
\qphyp{2}{0}{1}{q^{-n},c}{-}{q,\frac{q^{n+1}a}{c}}\\
&&=\left(qa,c;q\right)_n
\qphyp{1}{2}{-2}{q^{-n}}{\frac{qa}{c},\frac{q^{1-n}}{c}}{q,q}\\
&&\label{cor6.11.4}=q^{\binom{n}{2}}\left(\frac{-qa}{c}\right)^n
\frac{\left(qa,c;q\right)_n}{(\frac{qa}{c};q)_n}
\qhyp{1}{1}{q^{-n}}{\frac{q^{1-n}}{c}}{q,\frac{q^{1-n}}{a}}\\
&& \label{cor6.11.3}
=\left(qa;q\right)_n\qhyp{1}{1}{q^{-n}}{\frac{qa}{c}}{q,q^{n+1}a}.
\end{eqnarray}
\end{cor}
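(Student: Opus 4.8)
The plan is to obtain this corollary as the one-parameter degeneration of Corollary~\ref{cor5.14b}, the chain of transformations for the terminating ${}_6W_5^{2}$ attached to the $q^{-1}$-Al-Salam--Chihara polynomials, by sending the Al-Salam--Chihara parameter $d$ to infinity. The key observation is that the ${}_6W_5^{2}$ on the left of \eqref{cor5.14.1} has argument $q^{n+2}a^2/(cd)$, which already carries $d$ in its denominator; hence after the replacement $d\mapsto\lambda d$ the series argument is of the form $z/\lambda$ with $z$ independent of $\lambda$, and $\lambda\to\infty$ is exactly the template \eqref{Whypliminfty1} of Lemma~\ref{leminftyWhyp} with $p=2$. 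The limit therefore sends that ${}_6W_5^{2}$ to the ${}_5W_4^{3}$ of \eqref{cor6.11.1}, the surviving factor of $d$ correcting the argument to $q^{n+2}a^2/c$. So the whole proof reduces to carrying $\lim_{d\to\infty}$ through each of the eleven mutually equal expressions of Corollary~\ref{cor5.14b} and checking that the resulting identities are \eqref{cor6.11.1}--\eqref{cor6.11.3}.

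\textbf{Steps.} First I would treat the two very-well-poised members: for the ${}_6W_5^{2}$ of \eqref{cor5.14.2}, whose parameter $q^{-n}d/a\to\infty$ and whose argument $q^{2-n}/(cd)\to0$, one applies \eqref{Whypliminfty1} directly; for the ${}_6W_5^{-2}$ of \eqref{cor5.14.3}, whose base variable $q^{-n}c/d$ tends to $0$, one first rewrites it via \eqref{topWzero} and rescales using \eqref{lambdamu} before passing to the limit. For the remaining ${}_3\phi_1^{1}$-, ${}_2\phi_2$-, ${}_2\phi_2^{-1}$- and ${}_2\phi_1$-type members I would invoke the standard limit transitions \cite[(1.10.3--5)]{Koekoeketal}: the members such as \eqref{cor5.14.9} and the two ${}_2\phi_1$'s, whose arguments scale like $d$ or $1/d$, are handled by the numerator/denominator-merging limits, while the remaining $q$-argument members pass to the limit termwise. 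Throughout, the $d$-dependence of the explicit prefactors --- powers of $d$, the symbols $(d;q)_n$, $(qa/d;q)_n$, $(q^{-n}d/a;q)_n$, and the stray $q^{\binom n2}$-type factors --- must be combined with the $d$-dependent leading behaviour of the degenerating series so that each product has a finite nonzero limit; ratios $(ad;q)_n/(bd;q)_n$ collapse to $(a/b)^n$ by \eqref{lambdamu}, and $d^{-n}(d;q)_n$ yields the familiar $q^{\binom n2}(-1)^n$.

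\textbf{Main obstacle and an independent check.} The delicate point is precisely this prefactor bookkeeping: several right-hand members of Corollary~\ref{cor5.14b} carry $d$-dependent normalizations whose $d\to\infty$ behaviour must cancel exactly against the leading term of the degenerating hypergeometric series, and for members whose series argument itself tends to $0$ or $\infty$ one must select the matching member of the limit-transition family \eqref{Whypliminfty1}--\eqref{Whyplimzero3} rather than a naive termwise limit. Once the correct pairings are fixed, what remains is routine $q$-Pochhammer simplification, after which some of the eleven limiting expressions coincide and one is left with the eight in the statement. As an independent verification one may instead start from the representation Corollary~\ref{corcbqiH}, set $z^2=q^{n}a$ and $a=q^{n/2}\sqrt a/c$ there, solve for the ${}_5W_4^{3}$ appearing in \eqref{cbqiH:5}, and normalize exactly as in the proofs of Corollaries~\ref{cor:4.13} and \ref{cor5.14b}; this recovers the same chain of transformations.
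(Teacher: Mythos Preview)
Your approach is essentially the same as the paper's: the paper's proof simply reads ``Taking the limit as $d\to\infty$ in Corollary~\ref{cor5.14b} completes the proof'' (the reference in the paper is to the ${}_6W_5^{2}$ transformation chain, i.e.\ \eqref{cor5.14.1}--\eqref{cor5.14.11}), and you have spelled out in detail how each of those eleven members degenerates under $d\to\infty$ via Lemma~\ref{leminftyWhyp} and the standard limit transitions. Your additional independent check via Corollary~\ref{corcbqiH} is a nice bonus not present in the paper's one-line proof, and mirrors the alternative verification strategy used in the proofs of Corollaries~\ref{cor:4.13} and~\ref{cor5.14b}.
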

\begin{proof}
Taking the limit as $d\to\infty$ in Corollary \ref{cor5.14} completes 
the proof. 
\end{proof}

%%%%%%%%%%%%%%%%%%%%%%%%%%%%%%%%%%%%%%%%%%%%%
\subsection{Terminating \textit{1}-parameter summation formulas}
\label{oneparamsum}
%%%%%%%%%%%%%%%%%%%%%%%%%%%%%%%%%%%%%%%%%%%%%
By taking the limits of the Al-Salam--Chihara, $X_n({\bf a}|q^{\pm 1})$
(see Section \ref{twoparamsum}), and the $q^{-1}$-Al-Salam--Chihara polynomial 
representations and transformations as one of the parameters tends to either zero
or infinity, one can obtain summation formulas for ${}_{5}W_4^{-1}$ and 
${}_5W_{4}^1$. Here we present these summation formulas (see Figure \ref{figure1}).

\begin{thm}
Let $Q_n(x;{\bf a}|q)$ be the Al-Salam--Chihara polynomial, $n\in\mathbb N_0$,
${\bf a}=\{a_1,a_2\}$, $a_k\in\CC^\ast$,
$k,p\in{\bf 2}$.
Then
\begin{equation}
\lim_{a_p\to\infty}\frac{1}{a_p^n}Q_n(x;{\bf a}|q)
=\lim_{a_p\to 0}X_n({\bf a}|q)
=Y_n^-({\bf a}_{[p]}|q)
=q^{\binom{n}{2}}(-1)^n,
\end{equation}
\begin{equation}
\lim_{a_p\to\infty}\frac{1}{a_p^n}X_n({\bf a}|q)
=Y_n^+({\bf a}_{[p]}|q)
=q^{2\binom{n}{2}}a^n,
\end{equation}
\begin{equation}
\lim_{a_p\to \infty}\frac{1}{a_p}
Q_n(x;{\bf a}|q^{-1})
=q^{-3\binom{n}{2}}(-a)^nY_n^+({\bf a}_{[p]}^{-1}|q)
=q^{-\binom{n}{2}}(-1)^n,
\end{equation}
%\[
%\lim_{a_p\to\infty}
%\frac{1}{a_p^n}Q_n(x;{\bf a}|q^{-1})
%=q^{-3\binom{n}{2}}(-a_{12})^n
%%Y_n^-({\bf a}_{[p]}^{-1}|q)
%=q^{-2\binom{n}{2}}a_{12}^n(a_{12}^{-1};q)_n,
%\]
where
\begin{equation}
Y_n^-({\bf a}_{[p]}|q):=q^{\binom{n}{2}}(-1)^n
\frac{(\frac{a}{z};q)_n}{(z^{-2};q)_n}
\Wphyp{5}{4}{-1}{q^{-n}z^2}{q^{-n},az}
{q,\frac{q^n}{az}},
\label{Ynminform}
\end{equation}
\begin{equation}
Y_n^+({\bf a}_{[p]}|q):=q^{2\binom{n}{2}}
z^{-n}
\frac{(\frac{a}{z};q)_n}{(z^{-2};q)_n}
\Wphyp{5}{4}{1}{q^{-n}z^2}{q^{-n},az}
{q,\frac{qz}{a}},
\label{Ynplusform}
\end{equation}
$X_n$ is given in \eqref{Xnform} and we have relabeled ${\bf a}_{[p]}$ as $a$ where necessary.
\end{thm}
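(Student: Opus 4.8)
These three chains are proved exactly as in the two preceding theorems: each identity comes from sending one free parameter of an already-established terminating representation to $0$ or to $\infty$, and the plan is to evaluate every limit in two ways. Starting from a very-well-poised (``symmetric'') representation — the ${}_{6}W_{5}^{-2}$ of $Q_n(x;{\bf a}|q)$ from Corollary~\ref{cor:5.1}, the ${}_{6}W_{5}$ of $X_n$ in \eqref{Xnform}, or the ${}_{6}W_{5}^{2}$ of $Q_n(x;{\bf a}|q^{-1})$ from Corollary~\ref{cor:5.9} — the appropriate case of Lemma~\ref{leminftyWhyp} collapses the extra numerator entry and shifts the superscript by one, producing precisely the terminating ${}_{5}W_{4}^{\mp1}$ that defines $Y_n^{\mp}$ in \eqref{Ynminform}, \eqref{Ynplusform}. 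Starting instead from an elementary representation (a ${}_{2}\phi_{1}$, whose sum degenerates to its $k=0$ term in the limit) or from the closed form $X_n({\bf a}|q)=q^{\binom{n}{2}}(-1)^n(a_{12};q)_n$ established in Section~\ref{twoparamsum}, the same limit collapses to the scalar on the far right. Equating the two evaluations proves the limit identity and, as a by-product, evaluates the ${}_{5}W_{4}^{\pm1}$.

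For the first chain I would take the ${}_{6}W_{5}^{-2}$ representation of $Q_n(x;{\bf a}|q)$ and let $a_p\to\infty$: the extra numerator entry $a_pz$ together with the argument $q^{n}/(a_{12}z^{2})$ is of the shape covered by \eqref{Whypliminfty1} (with $\lambda=a_p$, $a_{r+1}=z$), so the series tends to the ${}_{5}W_{4}^{-1}$ of \eqref{Ynminform}, while $a_p^{-n}$ times the scalar prefactor tends to the factor $q^{\binom{n}{2}}(-1)^n(a/z;q)_n/(z^{-2};q)_n$ of \eqref{Ynminform} by the $q$-shifted-factorial limits following from \eqref{poch.id:3} and \eqref{poch.id:5}. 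The same limit applied to the ${}_{2}\phi_{1}$ representation \eqref{ASC:def4} gives the value $q^{\binom{n}{2}}(-1)^n$. For the middle term I start from \eqref{Xnform}: now $a_p\to0$ sends the entry $a_pz$ to $0$ and the argument $q/a_{12}$ to $\infty$, a case of \eqref{Whyplimzero1} (with $\epsilon=a_p$, $a_{r+1}=z$), again producing the ${}_{5}W_{4}^{-1}$ of \eqref{Ynminform}; and $\lim_{a_p\to0}X_n({\bf a}|q)=q^{\binom{n}{2}}(-1)^n\lim_{a_p\to0}(a_pa_r;q)_n=q^{\binom{n}{2}}(-1)^n$.

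The second chain is shorter: apply \eqref{Whypliminfty1} to the ${}_{6}W_{5}$ in \eqref{Xnform} as $a_p\to\infty$ to reach the ${}_{5}W_{4}^{1}$ of \eqref{Ynplusform}, and use $X_n({\bf a}|q)=q^{\binom{n}{2}}(-1)^n(a_{12};q)_n$ to get $\lim_{a_p\to\infty}a_p^{-n}X_n({\bf a}|q)=q^{\binom{n}{2}}(-1)^n(-a)^nq^{\binom{n}{2}}=q^{2\binom{n}{2}}a^{n}$. For the third chain I run the mirror computation on the $q^{-1}$ side: from the ${}_{6}W_{5}^{2}$ representation of $Q_n(x;{\bf a}|q^{-1})$, letting $a_p\to\infty$ sends the numerator entry $1/(a_pz)$ to $0$ and the argument to $\infty$, a case of \eqref{Whyplimzero1}, and yields a ${}_{5}W_{4}^{1}$ which, after the substitution $z\mapsto z^{-1}$ permitted by Remark~\ref{rem:2.7}, is exactly the one in \eqref{Ynplusform} evaluated at $a^{-1}$; tracking the prefactor and using the evaluation $Y_n^{+}(a^{-1}|q)=q^{2\binom{n}{2}}a^{-n}$ from the previous step then gives $q^{-3\binom{n}{2}}(-a)^{n}Y_n^{+}({\bf a}_{[p]}^{-1}|q)=q^{-\binom{n}{2}}(-1)^n$, while the elementary representation \eqref{qiASC:3} (again degenerating to its $k=0$ term) confirms the same scalar directly. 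Throughout, \eqref{poch.id:3} converts the $q^{-1}$-shifted factorials into $q$-shifted ones.

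The routine-but-delicate part, which I expect to absorb most of the work, is the bookkeeping of scalar prefactors: one must repeatedly rewrite $q$-shifted factorials such as $(a_pz^{\pm};q)_n$, $(a_r/a_p;q)_n$ and $(q^{1-n}z^{\pm}/a_p;q)_n$ using \eqref{poch.id:3}, \eqref{poch.id:5}, track the power of $a_p$ contributed by the $q^{-n}$-truncation of each very-well-poised series, and check that the surviving constant matches the normalizations in \eqref{Ynminform}, \eqref{Ynplusform}. One must also, for each of the four limits, choose the correct member of Lemma~\ref{leminftyWhyp} so that the superscript shift lands on $-1$ or $+1$ rather than $-3$ or $+3$. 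Once the right representation is paired with the right limit case, the remainder is this simplification.
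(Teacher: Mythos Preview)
Your proposal is correct and follows the same approach as the paper, whose proof is the single sentence ``Applying the necessary limits using Lemma~\ref{leminftyWhyp} completes the proof.'' You have simply fleshed out which representation to start from and which case of the lemma to invoke for each limit; the bookkeeping you flag as delicate (matching the scalar prefactors to the normalizations in \eqref{Ynminform}, \eqref{Ynplusform}) is indeed the only substantive work, and your plan of computing each limit twice---once from a very-well-poised representation and once from an elementary one or the closed form of $X_n$---is exactly how the identifications and summations fall out.
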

\begin{proof}
Applying the necessary
limits using Lemma \ref{leminftyWhyp} 
completes the proof. 
\end{proof}

\begin{cor}
\label{cor6.4}
Let $n\in\mathbb N_0$, $a,c\in\CCast$,
$q\in\CCdag$.
Then, one has the following transformation
and summation formulas for a terminating ${}_{5}W_4^{-1}$:
\begin{eqnarray}
&&\Wphyp{5}{4}{-1}{a}{q^{-n},c}{q,\frac{q^n}{c}}
=\left(\frac{1}{c}\right)^n
\frac{(qa,a,c;q)_n}
{(a;q)_{2n}(\frac{qa}{c};q)_n}
\Wphyp{5}{4}{-1}
{\frac{q^{-2n}}{a}}{q^{-n},\frac{q^{-n}c}{a}}
{q,\frac{q^{2n}a}{c}}\nonumber\\
&&\hspace{4.0cm}=\left(\frac{1}{c}\right)^n\frac{(qa;q)_n}{(\frac{qa}{c};q)_n}.
\end{eqnarray}
\end{cor}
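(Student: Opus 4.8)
The plan is to obtain Corollary~\ref{cor6.4} by specializing the ${}_6W_5$ transformation‑and‑summation formula of Corollary~\ref{cor5.6} to the limit $d\to 0$, using Lemma~\ref{leminftyWhyp} to lower the very‑well‑poised index from $0$ (the plain ${}_6W_5$) down to $-1$. Recall that Corollary~\ref{cor5.6} states
\[
\Whyp{6}{5}{a}{q^{-n},c,d}{q,\tfrac{q^{n+1}a}{cd}}
= q^{\binom{n}{2}}\Bigl(\tfrac{-qa}{cd}\Bigr)^{n}\frac{(qa,a,c,d;q)_n}{(a;q)_{2n}\,(\tfrac{qa}{c},\tfrac{qa}{d};q)_n}\,
\Whyp{6}{5}{\tfrac{q^{-2n}}{a}}{q^{-n},\tfrac{q^{-n}c}{a},\tfrac{q^{-n}d}{a}}{q,\tfrac{q^{n+1}a}{cd}}
= \frac{(qa,\tfrac{qa}{cd};q)_n}{(\tfrac{qa}{c},\tfrac{qa}{d};q)_n},
\]
and I would let $d\to 0$ with $a,c$ held fixed throughout this chain of equalities.

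The first step is to evaluate the limits of the two very‑well‑poised series. Applying the relevant limit transition of Lemma~\ref{leminftyWhyp} (the one with a vanishing last parameter, normalized to $1$, and $p\mapsto p-1$) to the left‑hand ${}_6W_5$ gives $\Whyp{6}{5}{a}{q^{-n},c,d}{q,\tfrac{q^{n+1}a}{cd}}\to\Wphyp{5}{4}{-1}{a}{q^{-n},c}{q,\tfrac{q^{n}}{c}}$, and the same transition applied to the second ${}_6W_5$, now with vanishing last parameter $q^{-n}d/a$ and very‑well‑poised base $q^{-2n}/a$, yields $\Whyp{6}{5}{\tfrac{q^{-2n}}{a}}{q^{-n},\tfrac{q^{-n}c}{a},\tfrac{q^{-n}d}{a}}{q,\tfrac{q^{n+1}a}{cd}}\to\Wphyp{5}{4}{-1}{\tfrac{q^{-2n}}{a}}{q^{-n},\tfrac{q^{-n}c}{a}}{q,\tfrac{q^{2n}a}{c}}$; these are precisely the two terminating ${}_5W_4^{-1}$'s appearing in Corollary~\ref{cor6.4}. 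The second step is to pass to the limit in the $q$-shifted‑factorial data, where individual factors diverge but their combinations converge. Using \eqref{poch.id:5} one has $(qa/d;q)_n=(q^{-n}d/a;q)_n(-qa/d)^nq^{\binom{n}{2}}$ and $(qa/(cd);q)_n=(q^{-n}cd/a;q)_n(-qa/(cd))^nq^{\binom{n}{2}}$; since $(d;q)_n$, $(q^{-n}d/a;q)_n$ and $(q^{-n}cd/a;q)_n$ all tend to $1$ as $d\to 0$, the $q^{\binom{n}{2}}$ factors cancel, the prefactor tends to $(1/c)^n(qa,a,c;q)_n/[(a;q)_{2n}(qa/c;q)_n]$, and $(qa,qa/(cd);q)_n/(qa/c,qa/d;q)_n$ tends to $(1/c)^n(qa;q)_n/(qa/c;q)_n$. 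Substituting these four limits into the displayed identity produces exactly the two equalities of Corollary~\ref{cor6.4}.

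I expect the only real work to be in that second step: matching up the diverging $q$-Pochhammer factors correctly via \eqref{poch.id:5} and verifying that the arguments delivered by Lemma~\ref{leminftyWhyp} simplify as claimed --- in particular that $(q^{-n}/a)(q^{n+1}a/c)/(q\cdot q^{-2n}/a)=q^{2n}a/c$ for the base‑$q^{-2n}/a$ series. As an independent check of the transformation half, one may instead begin from the ${}_6W_5^{-2}$ transformation of Corollary~\ref{cor:5.8} and let $d\to\infty$, which raises the index from $-2$ to $-1$ and reproduces the same identity; the summation half then also drops out of the $d\to\infty$ limit of the ${}_2\phi_1$ representation in \eqref{cor:5.8.8}, whose terminating series collapses to its $k=0$ term. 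One can likewise observe that the transformation half of Corollary~\ref{cor6.4} is a formal consequence of the summation half, since the argument of the right‑hand ${}_5W_4^{-1}$ is $q^{2n}a/c=q^{n}/(q^{-n}c/a)$, so that series is itself summable, and the transformation then reduces to an elementary $q$-Pochhammer identity from \eqref{poch.id:5}.
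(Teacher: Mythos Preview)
Your proof is correct and essentially coincides with the paper's. The paper's proof states the $d\to\infty$ limit of Corollary~\ref{cor:5.8} as the primary route and the $d\to0$ limit of Corollary~\ref{cor5.6} as the alternative, whereas you reverse that priority; since you carry out both limits and the $q$-Pochhammer bookkeeping via \eqref{poch.id:5} is handled correctly, the two write-ups are interchangeable.
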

\begin{proof}
Starting with Corollary \ref{cor:5.8} and letting 
$d\to\infty$ using Lemma \ref{leminftyWhyp}
completes the proof. 
One may also obtain the identical result
by starting with Corollary \ref{cor5.6},
and taking the limit as $d\to0$.
This is indicated in 
Figure \ref{figure1} below in the box for
$Y_n^{-}$ which is given in terms of a terminating
${}_{5}W_4^{-1}$.
\end{proof}

\begin{cor}
\label{cor6.8}
Let $n\in\mathbb N_0$, $a,c\in\CCast$, $q\in\CCdag$.
%such that $|q|\ne 1$.
Then, one has 
the following transformation
and summation 
formulas for a terminating 
${}_{5}W_{4}^{1}$:
\begin{eqnarray}
&&\Wphyp{5}{4}{1}{a}{q^{-n},c}{q,\frac{q^{n+1}a}{c}}
=q^{2\binom{n}{2}}\left(\frac{qa}{c}\right)^n
\frac{(qa,a,c;q)_n}
{(a;q)_{2n}(\frac{qa}{c};q)_n}
\Wphyp{5}{4}{1}
{\frac{q^{-2n}}{a}}{q^{-n},\frac{q^{-n}c}{a}}
{q,\frac{q}{c}}\nonumber\\
&&\hspace{4.5cm}=\frac{(qa;q)_n}{(\frac{qa}{c};q)_n}.
\end{eqnarray}
\end{cor}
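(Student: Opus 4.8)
The plan is to obtain this corollary, both the transformation and the summation, as the $d\to 0$ degeneration of the terminating ${}_6W_5^{2}$ transformation chain of Corollary~\ref{cor5.14b}, exactly paralleling the way the companion ${}_5W_4^{-1}$ identity of Corollary~\ref{cor6.4} was produced from the ${}_6W_5^{-2}$ chain of Corollary~\ref{cor:5.8}. Sending $d\to0$ in a ${}_6W_5^{2}$ sends the argument to infinity while killing one free parameter, so the relevant tool is the limit transition \eqref{Whyplimzero1} of Lemma~\ref{leminftyWhyp}, which lowers the bottom-zero superscript from $2$ to $1$; hence every ${}_6W_5^{2}$ appearing in Corollary~\ref{cor5.14b} turns into a terminating ${}_5W_4^{1}$.

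Concretely I would proceed as follows. First, apply \eqref{Whyplimzero1} to the two very-well-poised members \eqref{cor5.14.1} and \eqref{cor5.14.2}: the left side $\Wphyp{6}{5}{2}{a}{q^{-n},c,d}{q,\frac{q^{n+2}a^2}{cd}}$ tends to $\Wphyp{5}{4}{1}{a}{q^{-n},c}{q,\frac{q^{n+1}a}{c}}$, while in the right-hand ${}_6W_5^{2}$ the parameter absorbed is $q^{-n}/a$, so its argument becomes $q/c$, giving $\Wphyp{5}{4}{1}{\frac{q^{-2n}}{a}}{q^{-n},\frac{q^{-n}c}{a}}{q,\frac{q}{c}}$. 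Second, take the limit in the explicit prefactor of \eqref{cor5.14.2}: using $(d;q)_n\to1$ and $(qa/d;q)_n\sim(-qa/d)^nq^{\binom{n}{2}}$ as $d\to0$, the factor $q^{3\binom{n}{2}}(-q^2a^2/(cd))^n(qa,a,c,d;q)_n/((a;q)_{2n}(qa/c,qa/d;q)_n)$ collapses to $q^{2\binom{n}{2}}(qa/c)^n(qa,a,c;q)_n/((a;q)_{2n}(qa/c;q)_n)$, which yields the first equality. Third, for the summation I would carry the limit into the terminating ${}_2\phi_1$ member \eqref{cor5.14.11}, namely $((qa;q)_n/(qa/d;q)_n)\,\qhyp21{q^{-n},d}{\frac{qa}{c}}{q,\frac{q^{n+1}a}{d}}$: as $d\to0$ the dominant $k=n$ term of the series contributes a factor proportional to $d^{-n}$ which exactly cancels the $d^{n}$ coming from $1/(qa/d;q)_n$, and after simplifying the surviving powers of $q$ one is left with $(qa;q)_n/(qa/c;q)_n$, the asserted closed form. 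Finally I would record that the same identity follows alternatively by letting $d\to\infty$ in the Gasper--Rahman ${}_6W_5$ of Corollary~\ref{cor5.6} via \eqref{Whypliminfty1}, and note its consistency with the $Y_n^{+}$ box in Figure~\ref{figure1}.

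The hard part will be the prefactor and the $k=n$-term bookkeeping in the third step: one must track carefully that the diverging Pochhammer $(qa/d;q)_n$ and the vanishing Pochhammer $(d;q)_k$ inside the series combine with the explicit powers of $d$ and the accumulated $q^{\binom{n}{2}}$ and $q^{\binom{n+1}{2}}$ factors so that all powers of $d$ cancel and the residual power of $q$ is precisely $q^0$; a single misplaced $q^{\binom{n}{2}}$ or sign is the most likely slip. A secondary point of care is to ensure that the two applications of Lemma~\ref{leminftyWhyp} — one to the left member, one to the $q^{-2n}/a$ member — use the same branch \eqref{Whyplimzero1}, so that the superscripts on both sides descend in step from $2$ to $1$ and the two resulting arguments, $q^{n+1}a/c$ and $q/c$, come out consistently.
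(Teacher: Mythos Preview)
Your proposal is correct and follows essentially the same route as the paper: the paper's proof also obtains the result by letting $d\to 0$ in the ${}_6W_5^{2}$ chain (the reference there to Corollary~\ref{cor5.14} is evidently meant to point to Corollary~\ref{cor5.14b}) via Lemma~\ref{leminftyWhyp}, and it likewise records the alternative derivation from Corollary~\ref{cor5.6} with $d\to\infty$. Your write-up simply makes explicit the prefactor asymptotics and the $k=n$ surviving-term computation that the paper leaves to the reader.
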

\begin{proof}
Starting with Corollary \ref{cor5.14} and letting 
$d\to0$ using Lemma \ref{leminftyWhyp}
completes the proof.
One may also obtain the identical
result by starting with Corollary \ref{cor5.6}
and taking the limit as $d\to\infty$.
This is indicated in 
Figure \ref{figure1} below in the box for
$Y_n^{+}$ which is given in terms of a terminating
${}_{5}W_{4}^{1}$.
\end{proof}

%%%%%%%%%%%%%%%%%%%%%%%%%%%%%%%%%%%%%%%%%%%%%%%%%%%%%%%%%%%%%%%%%%%%%%%%%%%%%%%%%%%
\section{The continuous \textit{q} and \textit{q}\textsuperscript{-1}
\!\!-Hermite polynomials}
\label{cqH}
%%%%%%%%%%%%%%%%%%%%%%%%%%%%%%%%%%%%%%%%%%%%%%%%%%%%%%%%%%%%%%%%%%%%%%%%%%%%%%%%%%%
The continuous $q$-Hermite polynomials $H_n(x|q)$ are a family 
of polynomials which can be obtained from the Askey--Wilson,
continuous dual $q$-Hahn, Al-Salam--Chihara or continuous big
$q$-Hermite polynomials by taking all of their free parameters 
to be zero. 

%%%%%%%%%%%%%%%%%%%%%%%%%%%%%%%%%%%%%%%%%%%%%%%%%%%%%%%%%%%%%%%%%%%%%%%%%%%%%%%%%%%
\subsection{Terminating continuous \textit{q} and $q^{-1}$-Hermite polynomial 
representations}
%%%%%%%%%%%%%%%%%%%%%%%%%%%%%%%%%%%%%%%%%%%%%%%%%%%%%%%%%%%%%%%%%%%%%%%%%%%%%%%%%%%

\noindent One has the following list of terminating ${}_4W_3^{-4}$ and equivalent terminating basic hypergeometric representations for the continuous $q$-Hermite polynomials. 

\begin{cor} 
\label{corcqH}
Let $n\in\mathbb N_0$, $q\in\CCdag$,
$z\in\CCast$,
$x=\frac12(z+z^{-1})\in\CCast$.
The continuous $q$-Hermite polynomials are given by:
\begin{eqnarray}
\label{cqH:def1}
&&\hspace{-6.35cm}H_n(x|q)
=z^n \qphyp{1}{0}{-1}{q^{-n}}{-}{q,\frac{q^n}{z^2}}\\
\label{cqH:def2}
&&\hspace{-4.9cm}=\frac{z^n}{(z^{-2};q)_n}
\Wphyp{4}{3}{-4}{q^{-n}z^2}{q^{-n}}
{q,\frac{q^{3n-2}}{z^4}}.
\end{eqnarray}
\end{cor}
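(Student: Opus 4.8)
The plan is to obtain both identities by sending the single free parameter $a$ of the continuous big $q$-Hermite polynomials to zero in the representations of Corollary~\ref{corcbqH}, in exact analogy with how Corollary~\ref{corcbqH} itself was deduced from the Al-Salam--Chihara representations.

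For \eqref{cqH:def1} I would start from \eqref{cbqH:def4}, namely $H_n(x;a|q)=z^n\,\qhyp{2}{0}{q^{-n},az}{-}{q,q^n/z^2}$, and let $a\to0$. Since the series terminates and $(az;q)_k\to(0;q)_k=1$ termwise, the limit is the ${}_2\phi_0$ with the numerator parameter $az$ replaced by $0$, which in the van de Bult--Rains notation \eqref{topzero} is precisely $\qphyp{1}{0}{-1}{q^{-n}}{-}{q,q^n/z^2}$ (the bookkeeping in the $1+s-r$ exponent is exactly what this notation is designed to track); this also recovers the standard representation \cite[(14.26.1)]{Koekoeketal} after $\expe^{i\theta}\mapsto z$. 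Starting from \eqref{cbqH:def1} works too once the $a^{-n}$ prefactor is absorbed, but \eqref{cbqH:def4} avoids the singular prefactor entirely.

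For \eqref{cqH:def2} I would start from \eqref{cbqH:def5},
\[
H_n(x;a|q)=z^n\,\frac{(a/z;q)_n}{(z^{-2};q)_n}\,\Wphyp{5}{4}{-3}{q^{-n}z^2}{q^{-n},az}{q,\frac{q^{2n-1}}{az^3}},
\]
and match it to the $\epsilon\to0$ transition \eqref{Whyplimzero1} of Lemma~\ref{leminftyWhyp}. The very-well-poised base of this ${}_5W_4^{-3}$ is $q^{-n}z^2$; its last free parameter $az$ I write as $\epsilon a_{r+1}$ with $\epsilon:=a$ and $a_{r+1}:=z$; and its argument $q^{2n-1}/(az^3)=\epsilon^{-1}\,q^{2n-1}/z^3$ is of the form $z_0/\epsilon$ with $z_0:=q^{2n-1}/z^3$. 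Then \eqref{Whyplimzero1} with $p=-3$ produces a ${}_4W_3^{-4}$ with the same base $q^{-n}z^2$, parameter $q^{-n}$, and argument
\[
\frac{a_{r+1}\,z_0}{q\,(q^{-n}z^2)}=\frac{z\cdot q^{2n-1}/z^3}{q^{1-n}z^2}=\frac{q^{3n-2}}{z^4},
\]
while in the prefactor $(a/z;q)_n\to(0;q)_n=1$, leaving $z^n/(z^{-2};q)_n$; this is exactly \eqref{cqH:def2}.

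The only step requiring genuine care is the bookkeeping just described: the letter $a$ plays two unrelated roles here -- the polynomial parameter tending to $0$, and (in Lemma~\ref{leminftyWhyp}) the name of the very-well-poised base, which here equals $q^{-n}z^2$ and is what enters the $qa$ in the denominator of the limiting argument. Once those roles are kept distinct and the exponent arithmetic in the displayed argument is verified, the rest is a routine termwise passage to the limit, so I anticipate no real obstacle.
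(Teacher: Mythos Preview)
Your proposal is correct and follows exactly the paper's own route: take $a\to0$ in the continuous big $q$-Hermite representations \eqref{cbqH:def4} and \eqref{cbqH:def5} of Corollary~\ref{corcbqH}, invoking the appropriate limit transition from Lemma~\ref{leminftyWhyp} for the very-well-poised series. Your version is in fact more explicit than the paper's proof, which simply says ``use the appropriate limit transition formula'' for \eqref{cqH:def2}; your identification of \eqref{Whyplimzero1} with $\epsilon=a$, $a_{r+1}=z$, base $q^{-n}z^2$, and the verification of the resulting argument $q^{3n-2}/z^4$ is exactly the bookkeeping that line is gesturing at.
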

\begin{proof}
For \eqref{cqH:def1}, start with \eqref{cbqH:def2}, \eqref{cbqH:def3}, or 
\eqref{cbqH:def4} and take the limit $a\to 0$
(see also \cite[(14.26.1)]{Koekoeketal}). For \eqref{cqH:def2}, take the 
limit as $a\to 0$ in \eqref{cbqH:def5} and use the appropriate limit transition formula.
\end{proof}

%%%%%%%%%%%%%%%%%%%%%%%%%%%%%%%%%%%%%%%%%%%%%%%%%%%%%%%%%%%%%%%%%%%%%%%%%%%%%%%%%%%
%\subsection{Terminating continuous $q^{-1}$-Hermite polynomial representations}
%%%%%%%%%%%%%%%%%%%%%%%%%%%%%%%%%%%%%%%%%%%%%%%%%%%%%%%%%%%%%%%%%%%%%%%%%%%%%%%%%%%

\noindent One has the following list of terminating ${}_4W_3^{4}$ and equivalent terminating basic hypergeometric representations for the continuous $q^{-1}$-Hermite polynomials. 

\begin{cor} 
\label{corcqiH}
Let 
$n\in\mathbb N_0$, $q\in\CCdag$,
$z\in\CCast$,
$x=\frac12(z+z^{-1})\in\CCast$.
The continuous $q^{-1}$-Hermite polynomials are given by:
\begin{eqnarray}
\label{cqiH:def1}
&&\hspace{-6.3cm}H_n(x|q^{-1})
=z^n \qphyp{1}{0}{1}{q^{-n}}{-}{q,\frac{q}{z^2}}\\
\label{cqiH:def2}
&&\hspace{-4.4cm}=\frac{q^{\binom{n}{2}}(-1)^nz^{3n}}{(z^{2};q)_n}
\Wphyp{4}{3}{4}{\frac{q^{-n}}{z^2}}{q^{-n}}{q,\frac{q^{2-n}}{z^4}}.
\end{eqnarray}
\end{cor}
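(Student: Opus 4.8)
The plan is to obtain Corollary~\ref{corcqiH} exactly as Corollary~\ref{corcqH} was obtained from the continuous big $q$-Hermite polynomials: by sending the single remaining free parameter $a\to0$ in the continuous big $q^{-1}$-Hermite representations of Corollary~\ref{corcbqiH} and applying the appropriate limit transition. Equivalently, one may start from Corollary~\ref{corcqH} and apply the map $q\mapsto q^{-1}$ together with \eqref{poch.id:3}, the route used for Corollaries~\ref{cor:4.11}, \ref{cor:5.9} and \ref{corcbqiH}; I expect the two to agree and would use whichever produces the cleaner bookkeeping.

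For \eqref{cqiH:def1} I would take $a\to0$ in \eqref{cbqiH:4}, the cleanest of \eqref{cbqiH:2}--\eqref{cbqiH:4}: in $z^n\,\qphyp{2}{0}{1}{q^{-n},\frac{1}{az}}{-}{q,\frac{qa}{z}}$ the free parameter $\tfrac{1}{az}$ diverges while the argument $\tfrac{qa}{z}$ vanishes, with product pinned at $q/z^{2}$, so the basic hypergeometric limit transitions \cite[(1.10.3-5)]{Koekoeketal} collapse ${}_{3}\phi_{0}^{1}\to{}_{1}\phi_{0}^{1}$ and produce $z^n\,\qphyp{1}{0}{1}{q^{-n}}{-}{q,\frac{q}{z^{2}}}$; the prefactor $z^n$ is $a$-free and passes through unchanged. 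The alternatives \eqref{cbqiH:2}, \eqref{cbqiH:3} also work, but there one must first resolve the indeterminate prefactors using $\lim_{a\to0}a^{n}(x/a;q)_{n}=q^{\binom{n}{2}}(-x)^{n}$ against the collapsing series parameters.

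For \eqref{cqiH:def2} I would take $a\to0$ in \eqref{cbqiH:5}. The prefactor $(az^{2})^{n}\dfrac{(z/a;q)_{n}}{(z^{2};q)_{n}}=z^{2n}\cdot\dfrac{a^{n}(z/a;q)_{n}}{(z^{2};q)_{n}}$ tends, by $\lim_{a\to0}a^{n}(z/a;q)_{n}=q^{\binom{n}{2}}(-z)^{n}$, to $\dfrac{q^{\binom{n}{2}}(-1)^{n}z^{3n}}{(z^{2};q)_{n}}$, which is exactly the prefactor asserted. For the very-well-poised series, in $\Wphyp{5}{4}{3}{\frac{q^{-n}}{z^{2}}}{q^{-n},\frac{1}{az}}{q,\frac{q^{2-n}a}{z^{3}}}$ the parameter $\tfrac{1}{az}$ diverges and the argument $\tfrac{q^{2-n}a}{z^{3}}$ vanishes with fixed product $q^{2-n}/z^{4}$, which is precisely the hypothesis of \eqref{Whyplimzero3} in Lemma~\ref{leminftyWhyp} (take $\epsilon=a$, $a_{r+1}=1/z$); applying it increments the extension index, sending ${}_{5}W_{4}^{3}\to{}_{4}W_{3}^{4}$ with parameter $q^{-n}/z^{2}$ and argument $q^{2-n}/z^{4}$, i.e.\ $\Wphyp{4}{3}{4}{\frac{q^{-n}}{z^{2}}}{q^{-n}}{q,\frac{q^{2-n}}{z^{4}}}$. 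Multiplying the two limits gives \eqref{cqiH:def2}.

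The one delicate point is the bookkeeping of the competing $0$ and $\infty$ factors: choosing the normalization so that the $a$-dependence cancels and, for the $W$-series, selecting the correct one of the four transitions in Lemma~\ref{leminftyWhyp} --- here the ``parameter$\,\to\infty$, argument$\,\to0$'' case \eqref{Whyplimzero3}, which raises rather than lowers the extension index $p$. Should one instead prefer the $q\mapsto q^{-1}$ route, the corresponding nuisance is that \eqref{cqH:def2} yields a terminating ${}_{4}W_{3}^{-4}$ in base $q^{-1}$, which must be rewritten in base $q$ via \eqref{poch.id:3} applied to the prefactor $1/(z^{-2};q^{-1})_{n}$ together with the $q\leftrightarrow q^{-1}$ connecting relation \eqref{qtopiden} applied to the very-well-poised series; this is routine but heavier than the $a\to0$ argument.
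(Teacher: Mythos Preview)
Your argument is correct. The paper itself takes the other route you mention --- it simply applies $q\mapsto q^{-1}$ to Corollary~\ref{corcqH} together with \eqref{poch.id:3}, exactly as was done for Corollaries~\ref{cor:4.11}, \ref{cor:5.9} and \ref{corcbqiH} --- whereas you carry out the $a\to0$ limit from Corollary~\ref{corcbqiH} in detail. Both routes commute around the obvious square (take $a\to0$ then $q\mapsto q^{-1}$, or vice versa); your approach has the advantage of making the extension-index bookkeeping for the $W$-series explicit via Lemma~\ref{leminftyWhyp}, while the paper's one-line $q$-inversion avoids tracking the competing $0\cdot\infty$ factors altogether. One small slip: the collapse you describe is ${}_{2}\phi_{0}^{1}\to{}_{1}\phi_{0}^{1}$, not ${}_{3}\phi_{0}^{1}\to{}_{1}\phi_{0}^{1}$, since \eqref{cbqiH:4} has only two numerator parameters.
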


\begin{proof}
Each $q^{-1}$ representation is derived from the 
corresponding representation by applying the map $q\mapsto 
1/q$ and using \eqref{poch.id:3}.
\end{proof}

%%%%%%%%%%%%%%%%%%%%%%%%%%%%%%%%%%%%%%%%%%%%%%%%%%%%%%%%%%%%%%%%%%%%%%%%%%%%%%%%%%%
\subsection{Terminating \textit{0}-parameter $q$ and $q^{-1}$ transformations}
%%%%%%%%%%%%%%%%%%%%%%%%%%%%%%%%%%%%%%%%%%%%%%%%%%%%%%%%%%%%%%%%%%%%%%%%%%%%%%%%%%%

\noindent By studying the interrelations of the terminating basic hypergeometric functions in Corollary \ref{corcqH} we obtain the following transformation result for a terminating ${}_4W_3^{-4}$.

\begin{cor}
\label{cor7.4a}
Let $n\in\mathbb N_0$, $a\in\CCast$, $q\in\CCdag$.
Then, one has 
the following transformation
formulas for a terminating 
${}_{4}W_{3}^{-4}$:
\begin{eqnarray}
&&\hspace{-1cm}
\Wphyp{4}{3}{-4}{a}{q^{-n}}{q,\frac{q^{n-2}}{a^2}}
=\frac{q^{-3\binom{n}{2}}}
{(-q^2a^2)^n}\dfrac{\left(qa,a;q\right)_n}
{(a;q)_{2n}} 
\Wphyp{4}{3}{-4}{\frac{q^{-2n}}{a}}{q^{-n}}{q,q^{5n-2}a^2}\\
&&\hspace{2.65cm}=q^{-\binom{n}{2}}\frac{\left(qa;q\right)_n}{(-qa)^n}
\qphyp{1}{0}{-1}{q^{-n}}{-}{q,\frac{1}{a}}\\
&&\hspace{2.65cm}=q^{-3\binom{n}{2}}
\frac{\left(qa; q\right)_n}{(-q^2a^2)^n}
\qphyp{1}{0}{-1}{q^{-n}}{-}{q,q^{2n}a}.
\end{eqnarray}
\end{cor}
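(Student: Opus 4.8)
The plan is to imitate the derivations of the preceding corollaries in this chain: just as Corollary \ref{cor:5.8} came from Corollary \ref{cor:4.3} by letting $e\to 0$, and Corollary \ref{cor6.6a} from Corollary \ref{cor:5.8} by letting $d\to 0$, I would obtain the present corollary by starting with Corollary \ref{cor6.6a} and letting the remaining free parameter $c\to 0$. First I would check that this limit collapses the left-hand ${}_5W_4^{-3}$ to the required ${}_4W_3^{-4}$: in $\Wphyp{5}{4}{-3}{a}{q^{-n},c}{q,\frac{q^{n-1}}{ac}}$ the argument blows up as $c\to 0$, so the relevant limit is of the type in \eqref{Whypliminfty3}/\eqref{Whyplimzero1} of Lemma \ref{leminftyWhyp}; identifying $c$ with the parameter $a_{r+1}/\lambda$ there and the argument $q^{n-1}/(ac)$ with the ``$\lambda z$'' of \eqref{Whypliminfty3}, one solves and reads off a ${}_4W_3^{-4}$ in $a$ with argument $q^{n-2}/a^2$, which is exactly the left-hand side here. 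The same computation applied to the inverted ${}_5W_4^{-3}$ on the second line of Corollary \ref{cor6.6a} produces the inverted ${}_4W_3^{-4}$ with argument $q^{5n-2}a^2$.

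Next I would push the limit through each of the six remaining lines of Corollary \ref{cor6.6a}, using Lemma \ref{leminftyWhyp} for the very-well-poised entries and the classical limit transitions \cite[(1.10.3-5)]{Koekoeketal} for the ordinary ${}_2\phi_0$, ${}_1\phi_2$ and ${}_1\phi_1^{-1}$ entries. Under $c\to 0$ the two ${}_2\phi_0$'s become the two ${}_1\phi_0^{-1}$'s with unchanged arguments $q^{2n}a$ and $1/a$, and the ${}_3\phi_0^2$, ${}_1\phi_2$ and ${}_1\phi_1^{-1}$ lines coalesce with these, so that the eight equalities of Corollary \ref{cor6.6a} reduce to the four displayed here. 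In each case one must combine the explicit negative power of $c$ in the prefactor (a $c^{-2n}$ or $c^{-n}$) with the factor $(qa/c;q)_n$ in the denominator, which behaves like $(-a/c)^nq^{\binom{n+1}{2}}$ as $c\to 0$ — most cleanly handled by first rewriting $(qa/c;q)_n$ via \eqref{poch.id:5} before cancelling — together with the fact that in these lines the terminating series themselves vanish to order $c^n$; only after this cancellation does the finite limit emerge.

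The hard part will be precisely this bookkeeping: several of the source formulas have $c$ appearing simultaneously in a prefactor power, inside a $q$-Pochhammer symbol $(qa/c;q)_n$, and implicitly controlling the $O(c^n)$ vanishing of the terminating series, so one cannot set $c=0$ termwise but must expand to the correct order. A convenient safeguard against sign and power-of-$q$ slips is to verify each final formula in the case $n=1$, where every side is an explicit rational function of $a$ and $q$ (all equal to $1-q^{-2}a^{-2}$), making the consistency transparent. As an independent cross-check, one can instead derive the corollary straight from Corollary \ref{corcqH} via the substitution $z^2=q^na$: equating \eqref{cqH:def1} and \eqref{cqH:def2} gives the ${}_4W_3^{-4}$–${}_1\phi_0^{-1}$ identity, while the inverted ${}_4W_3^{-4}$ and the second ${}_1\phi_0^{-1}$ follow by combining this with the $q\leftrightarrow q^{-1}$ relation \eqref{qtopiden} and the representation in Corollary \ref{corcqiH}; but the limiting route from Corollary \ref{cor6.6a} is uniform with the rest of the paper and is the one I would write up.
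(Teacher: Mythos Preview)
Your approach is exactly the paper's: start with Corollary \ref{cor6.6a} and take the limit $c\to 0$, using Lemma \ref{leminftyWhyp} and the standard limit transitions for basic hypergeometric series. The paper's own proof is in fact just that single sentence, so your detailed bookkeeping (and the independent cross-check via Corollary \ref{corcqH} with $z^2=q^na$) goes beyond what the authors wrote down but is fully consistent with their method.
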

\begin{proof}
Start with Corollary \ref{cor6.6a}, then take the limit as $c\to 0$. This
completes the proof. 
\end{proof}

%%%%%%%%%%%%%%%%%%%%%%%%%%%%%%%%%%%%%%%%%%%%%%%%%%%%%%%%%%%%%%%%%%%%%%%%%%%%%%%%%%%
%\subsection{The base ${}_4W_3$ transformations and summation}
%%%%%%%%%%%%%%%%%%%%%%%%%%%%%%%%%%%%%%%%%%%%%%%%%%%%%%%%%%%%%%%%%%%%%%%%%%%%%%%%%%%

%%%%%%%%%%%%%%%%%%%%%%%%%%%%%%%%%%%%%%%%%%%%%%%%%%%%%%%%%%%%%%%%%%%%%%%%%%%%%%%%%%%
%\subsection{Terminating \textit{0}-parameter 
 %$q^{-1}$ transformations}
%%%%%%%%%%%%%%%%%%%%%%%%%%%%%%%%%%%%%%%%%%%%%%%%%%%%%%%%%%%%%%%%%%%%%%%%%%%%%%%%%%%

\noindent By studying the interrelations of the terminating basic hypergeometric functions in Corollary \ref{corcqiH} we obtain the following transformation result for a terminating ${}_4W_3^{4}$.

\begin{cor}
\label{cor7.8}
Let $n\in\mathbb N_0$, $a\in\CCast$, $q\in\CCdag$.
%such that $|q|\ne 1$. 
Then, one has 
the following transformation formulas for a terminating ${}_{4}W_{3}^{4}$:
\begin{eqnarray}
&&\hspace{-1cm}
\Wphyp{4}{3}{4}{a}{q^{-n}}{q,q^{n+2}a^2}
=q^{5\binom{n}{2}}(-q^2a^2)^n\frac{(qa,a;q)_n}{(a;q)_{2n}}
\Wphyp{4}{3}{4}{\frac{q^{-2n}}{a}}{q^{-n}}{q,\frac{q^{2-3n}}{a^2}}\\
&&\hspace{2.8cm}=q^{2\binom{n}{2}}
\left({qa}\right)^n\left(qa;q\right)_n
\qphyp{1}{0}{1}{q^{-n}}{-}{q,\frac{q^{1-n}}{a}}\\
&&\hspace{2.8cm}=\left(qa;q\right)_n
\qphyp{1}{0}{1}{q^{-n}}{-}{q,q^{n+1}a}.
\end{eqnarray}
\end{cor}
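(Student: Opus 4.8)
The plan is to obtain Corollary \ref{cor7.8} as the parameter-free (continuous $q^{-1}$-Hermite) limit of the one-parameter continuous big $q^{-1}$-Hermite chain of Corollary \ref{cor6.11}, exactly mirroring the way Corollary \ref{cor7.4a} was derived from Corollary \ref{cor6.6a}. Writing the parameter in Corollary \ref{cor6.11} as $c$, each of its eight expressions should pass, in a suitable limit, to an expression of Corollary \ref{cor7.8}; since the ${}_5W_4^{3}$ in \eqref{cor6.11.1} must become the ${}_4W_3^{4}$ of the statement — so that the number of trailing zeros in the denominator increases from $3$ to $4$ — Lemma \ref{leminftyWhyp} dictates that the relevant limit is $c\to\infty$ (the opposite limit $c\to0$, used in Corollary \ref{cor7.4a}, lowers a ${}_5W_4^{-3}$ to a ${}_4W_3^{-4}$, consistent with the opposite sign of the superscripts there). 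Concretely, \eqref{Whypliminfty1} with $r=4$, $p=3$ and no trailing parameters $a_5,\dots,a_r$ gives $\lim_{c\to\infty}\Wphyp{5}{4}{3}{a}{q^{-n},c}{q,\frac{q^{n+2}a^{2}}{c}}=\Wphyp{4}{3}{4}{a}{q^{-n}}{q,q^{n+2}a^{2}}$, which is already the left-hand side of Corollary \ref{cor7.8}, so no overall renormalisation is needed.

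The steps I would carry out, line by line on Corollary \ref{cor6.11}, are: (i) rewrite the argument of each expression in the $\lambda^{-1}z$ (or $\epsilon z$) form required by Lemma \ref{leminftyWhyp} and by the basic-hypergeometric limit transitions \cite[(1.10.3-5)]{Koekoeketal}, and then let $c\to\infty$; (ii) for the very-well-poised lines \eqref{cor6.11.1}--\eqref{cor6.11.2}, read the limits off \eqref{Whypliminfty1}, checking that the base $q^{-2n}/a$, the argument $q^{2-3n}/a^{2}$ and the constant $q^{5\binom n2}(-q^{2}a^{2})^{n}(qa,a;q)_n/(a;q)_{2n}$ emerge once one uses $(c;q)_n\sim(-c)^{n}q^{\binom n2}$ and $(qa/c;q)_n\to1$ and observes that the surplus powers of $c$ cancel; (iii) for the remaining, non-well-poised lines of Corollary \ref{cor6.11} (the ${}_{3}\phi_{0}$, ${}_{2}\phi_{0}^{1}$, ${}_{1}\phi_{2}^{-2}$ and ${}_{1}\phi_{1}$ expressions), pass to the limit through \cite[(1.10.3-5)]{Koekoeketal}; each of these collapses to a terminating ${}_1\phi_0^1$ with the arguments and prefactors displayed in Corollary \ref{cor7.8}; and (iv) collect the surviving distinct identities, noting that several lines of Corollary \ref{cor6.11} fall onto the same ${}_4W_3^4$ or the same ${}_1\phi_0^1$, which is why the present corollary has fewer lines than its parent.

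The point that needs real care — more a bookkeeping subtlety than a conceptual obstacle — is that some of these limits are not finite term by term: in the ${}_3\phi_0$ line of Corollary \ref{cor6.11} the scalar prefactor diverges like $c^{n}$, while the terminating ${}_3\phi_0$ itself tends to $\sum_{k=0}^{n}\frac{(q^{-n};q)_k}{(q;q)_k}q^{nk}=(1;q)_n$, which is $0$ for $n\ge1$ by the terminating $q$-binomial sum, so that the product has a finite limit; one must either rely on the limit transition formulas (which package precisely this cancellation) or track the subleading terms, taking care not to discard the factors of $q^{\binom n2}$ produced by $(c;q)_n$ and $(q^{\pm1}/c;q)_n$. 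This is the same bookkeeping already performed in the proof of Corollary \ref{cor7.4a}. As an independent check one can instead start from the representations \eqref{cqiH:def1}--\eqref{cqiH:def2} of the continuous $q^{-1}$-Hermite polynomials, invoke the $z\mapsto z^{-1}$ invariance of Remark \ref{rem:2.7} together with the substitution $z^{2}=q^{n}a$ (equivalently $z^{-2}=q^{-n}/a$), solve for the ${}_4W_3^4$ of \eqref{cqiH:def2}, and verify that the identities obtained agree with those of Corollary \ref{cor7.8}.
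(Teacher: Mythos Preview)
Your proposal is correct and follows the same route as the paper: send $c\to\infty$ in the string of equalities of Corollary \ref{cor6.11}, using Lemma \ref{leminftyWhyp} (specifically \eqref{Whypliminfty1}) for the very-well-poised lines and the standard basic-hypergeometric limit transitions for the remaining ones. The paper's proof also records, as you anticipate in step (iv), that several lines collapse onto the same limit --- it singles out the two ${}_2\phi_0^{1}$ lines of Corollary \ref{cor6.11} as producing, in the limit, the same ${}_1\phi_0^{1}$'s as the two ${}_1\phi_1$ lines --- which is why Corollary \ref{cor7.8} has fewer entries than its parent.

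One small remark: your claim that ``no overall renormalisation is needed'' is in fact correct --- the ${}_5W_4^{3}$ in \eqref{cor6.11.1} passes directly to the ${}_4W_3^{4}$ of the statement via \eqref{Whypliminfty1} --- whereas the paper's proof sketch says to multiply the whole string by $c^n$ before taking the limit. That instruction, taken literally, would make the left-hand side diverge; it appears to be a slip, and your direct-limit reading is the right one. Your caution about the ${}_3\phi_0$ and ${}_1\phi_2^{-2}$ lines (whose naive limits are of indeterminate type $\infty\cdot 0$) is well placed, but as you note, one can simply discard those lines since the remaining six already deliver all four identities of the corollary.
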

\begin{proof}
Multiply the string of equalities in Corollary \ref{cor6.11} by
$c^n$ and then taking the limit as $c\to\infty$
completes the proof. 
Furthermore, the limit 
as $c\to\infty$
of the ${}_2\phi_{0}^{1}$'s in Corollary \ref{cor6.11} produces the same results as the limit of the ${}_1\phi_1$'s.
\end{proof}

%%%%%%%%%%%%%%%%%%%%%%%%%%%%%%%
%%%%%%%%%%%%%%%%%%%%%%%%%%%%%%%
\subsection{Terminating \textit{0}-parameter
summation formulas}
%%%%%%%%%%%%%%%%%%%%%%%%%%%%%%%
%%%%%%%%%%%%%%%%%%%%%%%%%%%%%%%

By taking the limits of the 
continuous big $q$-Hermite, $Y_n^{\pm}({\bf a}|q)$
(see Section \ref{oneparamsum}),
and the continuous big $q^{-1}$-Hermite polynomial
representations and transformations as the
parameter tends to either zero
or infinity, one can obtain 
summation formulas for ${}_{4}W_3^{-2}$, ${}_4W_3$ and 
${}_4W_{3}^{2}$.
We present these summation
formulas in Figure \ref{figure1} below.

\begin{thm}
Let $H_n(x;a|q)$ be the continuous big $q$-Hermite polynomial, $n\in\mathbb N_0$,
$a\in\CC^\ast$.
Then
\begin{eqnarray}
&&\hspace{-3.5cm}\lim_{a\to\infty}\frac{1}{a^n}H_n(x;a|q)
=\lim_{a\to 0}Y_n^-(a|q)
=
%z^{-n}
Z_n^-(q)
=q^{\binom{n}{2}}(-1)^n,\\
&&\hspace{-3.5cm}\lim_{a\to\infty}\frac{1}{a^n}Y_n^-(a|q)
=\lim_{a\to0}Y_n^+(a|q)
=Z_n(q)
=\delta_{n,0},\\
&&\hspace{-3.5cm}\lim_{a\to\infty}\frac{1}{a^n}
Y_n^+(a|q)=
\lim_{a\to \infty}\frac{1}{a^n}
H_n(x;a|q^{-1})
=
%q^{-3\binom{n}{2}}
%(-1)^n
Z_n^+(q)
=q^{-\binom{n}{2}}(-1)^n,
\end{eqnarray}
%\[
%\lim_{a_p\to\infty}
%\frac{1}{a_p^n}Q_n(x;{\bf a}|q^{-1})
%=q^{-3\binom{n}{2}}(-a_{12})^n
%%Y_n^-({\bf a}_{[p]}^{-1}|q)
%=q^{-2\binom{n}{2}}a_{12}^n(a_{12}^{-1};q)_n,
%\]
where
\begin{eqnarray}
&&\hspace{-5.2cm}Z_n^-(q):=
\frac{q^{\binom{n}{2}}(-1)^n
%z^n
}{(z^{-2};q)_n}
\Wphyp{4}{3}{-2}{q^{-n}z^2}{q^{-n}}{q,\frac{q^{2n-1}}{z^2}},\\
&&\hspace{-5.2cm}Z_n(q):=\frac{q^{2\binom{n}{2}}z^{-n}}{(z^{-2};q)_n}
\Whyp43{\frac{q^{-n}}{z^2}}{q^{-n}}{q,q^n},\\
&&\hspace{-5.2cm}Z_n^+(q):=
%q^{3\binom{n}{2}}
\frac{
%(-1)^n
z^{-2}}{(z^{-2};q)_n}
\Wphyp{4}{3}{2}{q^{-n}z^2}{q^{-n}}{q,qz^{2}},
\end{eqnarray}
and $Y_n^{\pm}$ are defined in 
\eqref{Ynminform},
\eqref{Ynplusform}.
\end{thm}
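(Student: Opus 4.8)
The plan is to follow the template of the two preceding summation theorems: every displayed equality is produced either by substituting into a closed form already established or by pushing one parameter of a terminating ${}_5W_4^{\pm 3}$ (continuous big $q$- or $q^{-1}$-Hermite) representation, or of the terminating ${}_5W_4^{\pm 1}$ defining $Y_n^\pm$, to $0$ or $\infty$ and invoking Lemma \ref{leminftyWhyp}. The equalities of $Y_n^\pm$ with the stated constants will be immediate: the preceding (Al-Salam--Chihara) theorem gives $Y_n^-(a|q)=q^{\binom{n}{2}}(-1)^n$ and $Y_n^+(a|q)=q^{2\binom{n}{2}}a^n$ identically in the remaining parameter, so $\lim_{a\to0}Y_n^-(a|q)=q^{\binom{n}{2}}(-1)^n$, $\lim_{a\to\infty}a^{-n}Y_n^-(a|q)=\delta_{n,0}$, and $\lim_{a\to0}Y_n^+(a|q)=\delta_{n,0}$ with no work; this also pins down $Z_n^-(q)$ and $Z_n(q)$ once these are identified with the corresponding limits.

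To carry out those identifications I would start from \eqref{cbqH:def5}, divide by $a^n$, and let $a\to\infty$: the prefactor $a^{-n}(a/z;q)_n$ tends to $q^{\binom{n}{2}}(-1/z)^n$ by \eqref{poch.id:3}, while the terminating ${}_5W_4^{-3}$ collapses, via \eqref{Whypliminfty1} with $\lambda=a$ and the freed parameter equal to $z$, to precisely the ${}_4W_3^{-2}$ occurring in $Z_n^-(q)$; hence $\lim_{a\to\infty}a^{-n}H_n(x;a|q)=Z_n^-(q)$. The same reduction applied to the definition \eqref{Ynminform} of $Y_n^-$ with $a\to0$ (using \eqref{Whyplimzero1} with $\epsilon=a$) yields $\lim_{a\to0}Y_n^-(a|q)=Z_n^-(q)$, closing the first line. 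The analogous manipulations on \eqref{cbqiH:5} and on the definition \eqref{Ynplusform} of $Y_n^+$ dispatch the remaining identifications in the second and third lines; at a few points one should permit the cosmetic substitution $z\mapsto z^{-1}$, which leaves every function of $x=\frac12(z+z^{-1})$ unchanged by Remark \ref{rem:2.7}, so that the output of Lemma \ref{leminftyWhyp} matches the stated $Z$'s rather than their $z\mapsto z^{-1}$ images.

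The last two constants are then read off from the simplest representations instead of the ${}_5W_4$'s. From \eqref{cbqH:def4}, $a^{-n}H_n(x;a|q)=a^{-n}z^n\,{}_2\phi_0(q^{-n},az;-;q,q^n/z^2)$; as $a\to\infty$ only the $k=n$ term survives after the division by $a^n$, and using $(q^{-n};q)_n/(q;q)_n=(-1)^nq^{-\binom{n+1}{2}}$ one gets the value $q^{\binom{n}{2}}(-1)^n$, re-confirming $Z_n^-(q)$. Similarly \eqref{cbqiH:4} gives $a^{-n}H_n(x;a|q^{-1})=a^{-n}z^n\,{}_2\phi_0^{1}(q^{-n},(az)^{-1};-;q,qa/z)$, and the same truncation produces $\lim_{a\to\infty}a^{-n}H_n(x;a|q^{-1})=q^{-\binom{n}{2}}(-1)^n=Z_n^+(q)$. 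The only genuine difficulty is bookkeeping: at each use of Lemma \ref{leminftyWhyp} one must track the very-well-poised parameter, the extra-zero exponent, and the argument so that they land exactly on the stated terminating $Z$'s, and keep the $z\leftrightarrow z^{-1}$ choices consistent along the whole chain of limits — this matching is the step where sign and $q$-power slips are most likely to creep in.
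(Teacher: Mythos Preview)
Your proposal is correct and follows essentially the same approach as the paper: the paper's entire proof is ``Applying the necessary limits using Lemma \ref{leminftyWhyp} completes the proof,'' and that is exactly the core mechanism you describe, with the closed forms for $Y_n^{\pm}$ from the preceding theorem supplying the numerical constants. Your explicit bookkeeping (which representations to push to which limit, and the $z\leftrightarrow z^{-1}$ cosmetic adjustment via Remark \ref{rem:2.7}) simply unpacks what the paper leaves implicit.
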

\begin{proof}
Applying the necessary
limits using Lemma \ref{leminftyWhyp} 
completes the proof. 
\end{proof}

\begin{cor}
\label{cor7.3}
Let $n\in\mathbb N_0$, $a\in\CCast$, $q\in\CCdag$. 
%such that $|q|\ne 1$.
Then, one has 
the following transformation
and summation 
formulas for a terminating 
${}_{4}W_3^{-2}$:
\begin{eqnarray}
&&\Wphyp{4}{3}{-2}{a}{q^{-n}}{q,\frac{q^{n-1}}{a}}
=q^{-\binom{n}{2}}\left(\frac{-1}{qa}\right)^n
\frac{(qa,a;q)_n}
{(a;q)_{2n}}
\Wphyp{4}{3}{-2}
{\frac{q^{-2n}}{a}}{q^{-n}}
{q,q^{3n-1}a}\nonumber
\\&&\hspace{4.2cm}
=q^{-\binom{n}{2}}
\left(\frac{-1}{qa}\right)^n(qa;q)_n.
\end{eqnarray}
\end{cor}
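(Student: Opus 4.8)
The plan is to derive both equalities in one stroke by letting $c\to 0$ in Corollary~\ref{cor6.4}, which already records a transformation--summation pair for a terminating ${}_5W_4^{-1}$; this parallels the way Corollary~\ref{cor6.4} was itself obtained from Corollary~\ref{cor:5.8}, and it is the concrete realization of the relation $\lim_{a\to 0}Y_n^-(a|q)=Z_n^-(q)$ appearing in the theorem immediately preceding this corollary, since the ${}_5W_4^{-1}$ in \eqref{Ynminform} is of exactly the shape transformed in Corollary~\ref{cor6.4}. The first step is to cast each of the three members of Corollary~\ref{cor6.4} in the form required by the $\epsilon\to 0$ transition~\eqref{Whyplimzero1} of Lemma~\ref{leminftyWhyp}. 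For the left member, ${}_5W_4^{-1}(a;q^{-n},c;q,q^{n}/c)$, one takes $c$ itself as the vanishing slot (so the residual factor is $1$ and the series argument is $q^n$), and \eqref{Whyplimzero1} returns ${}_4W_3^{-2}(a;q^{-n};q,q^{n-1}/a)$, the left-hand side of the claim. For the middle member the very-well-poised parameter already carries the vanishing factor as $q^{-n}c/a$, so the same transition, now with residual factor $q^{-n}/a$ and series argument $q^{2n}a$, produces ${}_4W_3^{-2}(q^{-2n}/a;q^{-n};q,q^{3n-1}a)$ once one simplifies $(q^{-n}/a)(q^{2n}a)/(q\cdot q^{-2n}/a)=q^{3n-1}a$.

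What is left is $q$-Pochhammer bookkeeping of the prefactors, which is routine. In every case the only factor that does not converge as $c\to 0$ is a denominator $(qa/c;q)_n$, and the limit recorded right after~\eqref{poch.id:3}, namely $\lim_{b\to\infty}b^{-n}(xb;q)_n=q^{\binom{n}{2}}(-x)^n$ used with $b=1/c$ and $x=qa$, gives $(qa/c;q)_n\sim(-1)^n(qa)^nq^{\binom{n}{2}}c^{-n}$. Pairing this with the explicit $c^{-n}$ already carried by the prefactors of Corollary~\ref{cor6.4}, and using $(c;q)_n\to 1$, collapses the summation-side prefactor $(1/c)^n(qa;q)_n/(qa/c;q)_n$ to $q^{-\binom{n}{2}}(-1/(qa))^n(qa;q)_n$ and the middle prefactor $(1/c)^n(qa,a,c;q)_n/\big((a;q)_{2n}(qa/c;q)_n\big)$ to $q^{-\binom{n}{2}}(-1/(qa))^n(qa,a;q)_n/(a;q)_{2n}$; these are exactly the constants in the statement.

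The only point where care is genuinely needed --- and the step I expect to be the main (if modest) obstacle --- is keeping the exponents of $q$ and the overall sign straight in the argument of the transformed ${}_4W_3^{-2}$, where the very-well-poised parameter $q^{-2n}/a$, the vanishing slot $q^{-n}c/a$, and the series argument $q^{2n}a/c$ all feed into the combination $a_{r+1}z/(qA)$ of \eqref{Whyplimzero1} (with $A$ the very-well-poised parameter); a slip in that exponent, or in the $(-1)^n$ coming from $(qa/c;q)_n$, is the principal danger, and is caught by the $n=0,1$ cases. As an independent check I would confirm the summation equality against the theorem above: putting $q^{-n}z^2=a$ in the definition of $Z_n^-(q)$ rewrites it as $\frac{q^{\binom{n}{2}}(-1)^n}{(q^{-n}/a;q)_n}\,{}_4W_3^{-2}(a;q^{-n};q,q^{n-1}/a)$, and then the identity $(q^{-n}/a;q)_n=q^{-\binom{n}{2}}(-1/(qa))^n(qa;q)_n$ (the first of the identities displayed just after~\eqref{lambdamu}, applied with $a\mapsto 1/a$) together with $Z_n^-(q)=q^{\binom{n}{2}}(-1)^n$ recovers the stated closed form; one may also re-derive the transformation equality directly by applying Lemma~\ref{lem:2.3} to the ${}_6\phi_3$ that ${}_4W_3^{-2}$ abbreviates, and checking that the two routes agree.
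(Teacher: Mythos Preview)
Your proposal is correct and coincides with one of the two routes the paper itself indicates: the paper's primary proof starts from Corollary~\ref{cor6.6a} (the ${}_5W_4^{-3}$ transformation) and lets $c\to\infty$, but it explicitly notes that the identical result follows by starting from Corollary~\ref{cor6.4} and taking $c\to 0$, which is precisely what you do. Your execution of the limit via \eqref{Whyplimzero1} and the asymptotic $(qa/c;q)_n\sim(-qa)^nq^{\binom{n}{2}}c^{-n}$ is accurate, and the exponent bookkeeping for the argument $q^{3n-1}a$ is correct.
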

\begin{proof}
Starting with Corollary \ref{cor6.6a} and letting 
$c\to\infty$ using Lemma \ref{leminftyWhyp}
completes the proof. 
One may produce the identical 
result by starting with Corollary \ref{cor6.4},
and taking the limit as $c\to0$.
This is indicated in 
Figure \ref{figure1} below in the box for
$Z_n^{-}$ which is given in terms of a terminating
${}_{4}W_3^{-2}$.
\end{proof}

\begin{cor}[{\cite[(2.3.4)]{GaspRah}}]
\label{cor7.3a}
Let $n\in\mathbb N_0$, $a\in\CCast$, 
$q\in\CCdag$.
Then, one 
has the following transformation and summation formulas for a terminating 
${}_{4}W_3$:
\begin{eqnarray}
&&\Whyp{4}{3}{a}{q^{-n}}{q,q^n}
=q^{\binom{n}{2}}(-1)^n
\frac{(qa,a;q)_n}
{(a;q)_{2n}}
\Whyp{4}{3}
{\frac{q^{-2n}}{a}}{q^{-n}}
{q,q^n}=\delta_{n,0}.
\end{eqnarray}
\end{cor}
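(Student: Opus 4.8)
The plan is to obtain this identity as the ``$0$-parameter'' degeneration of Corollary~\ref{cor6.4}, exactly mirroring the way Corollary~\ref{cor7.3} was derived from Corollaries~\ref{cor6.6a} and~\ref{cor6.4}, and as indicated by the $Z_n(q)$ box in Figure~\ref{figure1}.

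First I would write down the chain of three equalities in Corollary~\ref{cor6.4} for a terminating ${}_5W_4^{-1}$,
\[
\Wphyp{5}{4}{-1}{a}{q^{-n},c}{q,\tfrac{q^n}{c}}
=\Bigl(\tfrac1c\Bigr)^{\!n}\frac{(qa,a,c;q)_n}{(a;q)_{2n}(\tfrac{qa}{c};q)_n}\,
\Wphyp{5}{4}{-1}{\tfrac{q^{-2n}}{a}}{q^{-n},\tfrac{q^{-n}c}{a}}{q,\tfrac{q^{2n}a}{c}}
=\Bigl(\tfrac1c\Bigr)^{\!n}\frac{(qa;q)_n}{(\tfrac{qa}{c};q)_n},
\]
and let $c\to\infty$. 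Applying the limit transition \eqref{Whypliminfty1} of Lemma~\ref{leminftyWhyp} (with $r=4$, $p=-1$; the degenerating parameter taken as $\lambda\cdot 1$ on the left, giving $z=q^n$, and as $\lambda\cdot q^{-n}/a$ in the middle, giving $z=q^{2n}a$), the leftmost ${}_5W_4^{-1}$ tends to $\Whyp43{a}{q^{-n}}{q,q^n}$ and the ${}_5W_4^{-1}$ in the middle tends to $\Whyp43{\frac{q^{-2n}}{a}}{q^{-n}}{q,q^n}$.

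Next I would handle the scalar prefactors by elementary $q$-Pochhammer limits: from \eqref{poch.id:1} one has $\lim_{c\to\infty}c^{-n}(c;q)_n=q^{\binom n2}(-1)^n$ and $\lim_{c\to\infty}(qa/c;q)_n=1$, so the middle prefactor converges to $q^{\binom n2}(-1)^n(qa,a;q)_n/(a;q)_{2n}$, which gives the stated transformation. For the rightmost expression, $(1/c)^n(qa;q)_n/(qa/c;q)_n$ equals $1$ when $n=0$ and is killed by the factor $(1/c)^n$ when $n\ge 1$, hence converges to $\delta_{n,0}$; this yields the summation.

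Since all of the analytic content is already packaged in Lemma~\ref{leminftyWhyp} together with the elementary $q$-Pochhammer limits, there is no serious obstacle here; the only point requiring care is the bookkeeping — matching the left-hand and the middle ${}_5W_4^{-1}$ to the correct instances of \eqref{Whypliminfty1}--\eqref{Whyplimzero3} so that \emph{both} resulting ${}_4W_3$'s acquire the argument $q^n$, and tracking the compensating powers of $c$. One may alternatively run the identical argument starting from Corollary~\ref{cor6.8} and letting $c\to0$ via \eqref{Whyplimzero1}; this gives the same result and is the other route recorded in Figure~\ref{figure1}.
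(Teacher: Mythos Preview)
Your proposal is correct and follows exactly the paper's own argument: start from Corollary~\ref{cor6.4}, let $c\to\infty$ via Lemma~\ref{leminftyWhyp}, and observe the alternative route through Corollary~\ref{cor6.8} with $c\to0$, as recorded in the $Z_n(q)$ box of Figure~\ref{figure1}. Your additional bookkeeping on the $q$-Pochhammer limits and the matching of \eqref{Whypliminfty1} to both ${}_5W_4^{-1}$'s is accurate and simply makes explicit what the paper leaves implicit.
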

\begin{proof}
Starting with Corollary \ref{cor6.4} and letting 
$c\to\infty$ using Lemma \ref{leminftyWhyp}
completes the proof. 
One may produce the identical result by
starting with Corollary \ref{cor6.8},
and taking the limit as $c\to0$.
This is indicated in Figure \ref{figure1} below in the box for
$Z_n$ which is given in terms of a terminating ${}_{4}W_3$.
\end{proof}

\begin{cor}
\label{cor7.7}
Let $n\in\mathbb N_0$, $a\in\CCast$, 
$q\in\CCdag$.
%such that $|q|\ne 1$.
Then, one has 
the following transformation
and summation 
formulas for a terminating 
${}_{4}W_{3}^{2}$:
\begin{eqnarray}
&&\hspace{-0.7cm}\Wphyp{4}{3}{2}{a}{q^{-n}}{q,q^{n+1}a}
=q^{3\binom{n}{2}}(-qa)^n
\frac{(qa,a;q)_n}
{(a;q)_{2n}}
\Wphyp{4}{3}{2}
{\frac{q^{-2n}}{a}}{q^{-n}}
{q,\frac{q^{1-n}}{a}}=(qa;q)_n.
\end{eqnarray}
\end{cor}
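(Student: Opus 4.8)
The plan is to derive this exactly as Corollaries \ref{cor7.3} and \ref{cor7.3a} were obtained: degenerate a continuous big $q$-Hermite level ($1$-parameter) identity down to the continuous $q$-Hermite level ($0$-parameter) by sending the remaining free parameter to a limit. Concretely, I would start from the string of equalities in Corollary \ref{cor6.8},
\[
\Wphyp{5}{4}{1}{a}{q^{-n},c}{q,\tfrac{q^{n+1}a}{c}}
=q^{2\binom{n}{2}}\Bigl(\tfrac{qa}{c}\Bigr)^{n}\frac{(qa,a,c;q)_n}{(a;q)_{2n}(\tfrac{qa}{c};q)_n}\,\Wphyp{5}{4}{1}{\tfrac{q^{-2n}}{a}}{q^{-n},\tfrac{q^{-n}c}{a}}{q,\tfrac{q}{c}}=\frac{(qa;q)_n}{(\tfrac{qa}{c};q)_n},
\]
and let $c\to\infty$ using Lemma \ref{leminftyWhyp}. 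Since all the series here are terminating, these are limits of finite expressions, so no analytic subtleties arise and the limit may be passed through each term.

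For the leftmost ${}_5W_4^1$, viewing $c$ as the lone free upper parameter and $q^{n+1}a/c$ as the argument, the transition \eqref{Whypliminfty1} (with $\lambda\mapsto c$, trailing parameter $1$, and $z=q^{n+1}a$) collapses it to $\Wphyp{4}{3}{2}{a}{q^{-n}}{q,q^{n+1}a}$, the left-hand side of the claim; the superscript moves from $1$ to $2$. Applying the same formula to the ${}_5W_4^1$ inside the middle expression — now with free parameter $q^{-n}c/a=\lambda a_{r+1}$, $a_{r+1}=q^{-n}/a$, argument $q/c=z/\lambda$, $z=q$ — yields $\Wphyp{4}{3}{2}{\tfrac{q^{-2n}}{a}}{q^{-n}}{q,\tfrac{q^{1-n}}{a}}$, the right-hand ${}_4W_3^2$; and the summation value $(qa;q)_n/(\tfrac{qa}{c};q)_n\to(qa;q)_n$ immediately.

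The only genuine bookkeeping is the prefactor. As $c\to\infty$ one has $(\tfrac{qa}{c};q)_n\to1$, while by \eqref{poch.id:5} (equivalently the limit $\lim_{b\to\infty}b^{-n}(xb;q)_n=q^{\binom n2}(-x)^n$ recorded just below it) $(c;q)_n\sim q^{\binom n2}(-c)^n$; hence $\bigl(\tfrac{qa}{c}\bigr)^{n}(c;q)_n\to q^{\binom n2}(-qa)^n$, so the coefficient $q^{2\binom n2}\bigl(\tfrac{qa}{c}\bigr)^{n}(c;q)_n/(\tfrac{qa}{c};q)_n$ tends to $q^{3\binom n2}(-qa)^n$, and together with the untouched factor $(qa,a;q)_n/(a;q)_{2n}$ this is precisely the asserted prefactor. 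I expect this $q$-Pochhammer asymptotics to be the only delicate step; everything else is a direct substitution into Lemma \ref{leminftyWhyp}.

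For completeness I would note the identity can equally be produced ``from above'' by starting from Corollary \ref{cor6.11} (the ${}_5W_4^3$ transformation) and letting $c\to0$ via the $\epsilon\to0$ transition \eqref{Whyplimzero1}, which lowers the superscript from $3$ to $2$; this is the route recorded in Figure \ref{figure1} in the box for $Z_n^+$, which is written there as a terminating ${}_4W_3^2$.
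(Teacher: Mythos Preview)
Your proposal is correct and follows essentially the same approach as the paper: start from Corollary \ref{cor6.8} and let $c\to\infty$ via Lemma \ref{leminftyWhyp}, with the alternative route from Corollary \ref{cor6.11} via $c\to0$ also noted. The prefactor bookkeeping you spell out is exactly right.
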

\begin{proof}
Starting with Corollary \ref{cor6.8} and letting 
$c\to\infty$ using Lemma \ref{leminftyWhyp}
completes the proof.
One may produce the identical result by
starting with Corollary \ref{cor6.11},
and taking the limit as $c\to0$.
This is indicated in 
Figure \ref{figure1} below in the box for
$Z_n^{+}$ which is given in terms of a terminating
${}_{4}W_{3}^{2}$.
\end{proof}

\section{The symmetry group \textit{q}-Askey scheme}
\label{symmetrygroup}

We provide a diagram describing the hierarchy of transformation and summation formulas we have derived through the $q$ and $q^{-1}$-symmetric scheme (see Figure \ref{figure1}). 
The terminating transformations referred
to in this paper 
should match to
a \hbox{($q$-)Askey} scheme of symmetry
groups, parallel to the \hbox{($q$-)Askey} scheme of
(basic) hypergeometric orthogonal polynomials.
This data we have presented is critical for summarizing the properties of
this transformation group scheme.

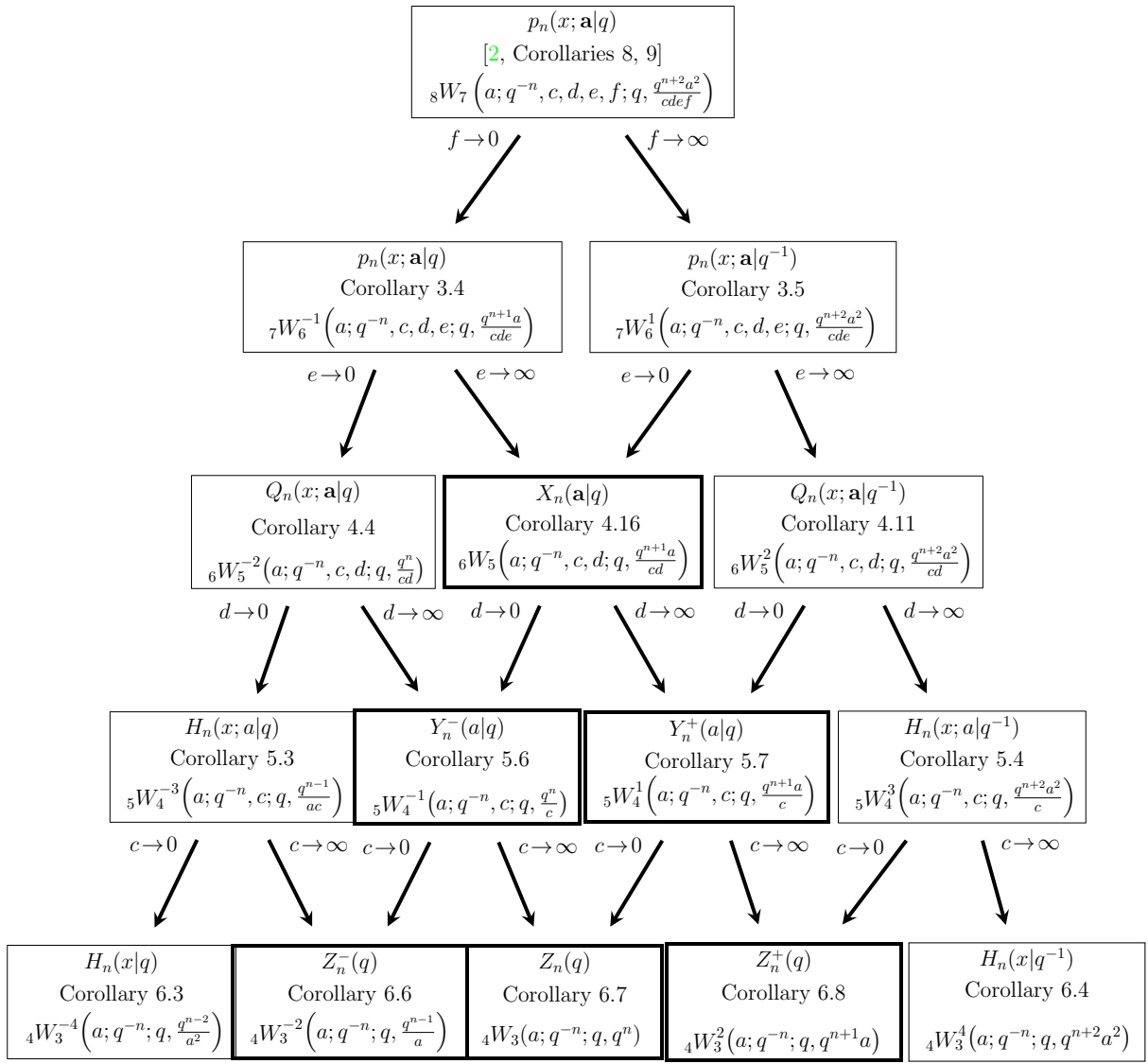
\begin{figure}[!hbt]
%\begin{sidewaysfigure}
%\label{sidewaysfigure}
%\begin{figure}[!htb]
\caption{
A summation (bold outline boxes) and transformation (regular 
outline boxes) scheme for the $q$-symmetric and $q^{-1}$-symmetric 
subfamilies of basic hypergeometric orthogonal polynomials in the 
$q$-Askey scheme.
\vspace*{4mm}
%\\[0.4cm]
}
\begin{tikzpicture}[level distance=.05cm,sibling distance=.04cm,scale=0.57
,every node/.style={scale=0.62},inner sep=11pt]
\node(cqH)
at (0.0,0.0){
\setlength{\fboxsep}{4pt}
\fbox{
$\begin{array}{cc}
\text{\Large$H_n(x|q)$}
\\[6.5pt]\text{\Large Corollary \ref{cor7.4a}}\\[3.5pt]
\text{\Large$
%{}_{4}W_{3}^{-4}
\hspace{-0.2cm}\Wphyp{4}{3}{-4}{a}{q^{-n}}{q,\frac{q^{n-2}}{a^2}}\hspace{-0.2cm}
$}\\[0.1cm]
\end{array}$
}
};

\node(Zm)
at (5.7,0.0){
{\setlength{\fboxrule}{0.070cm}
\fbox{
$\begin{array}{cc}
\text{\Large$Z^{-}_n(q)$}
\\[6.5pt]\text{\Large Corollary \ref{cor7.3}}\\[3.5pt]
\text{\Large
\hspace{-0.2cm}$\Wphyp{4}{3}{-2}{a}{q^{-n}}{q,\frac{q^{n-1}}{a}}\hspace{-0.2cm}$
}
\end{array}$
}
}
}; 

\node(Z)
at (11,0.0){
{\setlength{\fboxrule}{0.070cm}
\fbox{
$\begin{array}{cc}
\text{\Large$Z_n(q)$}
\\[7.0pt]
\text{\Large Corollary \ref{cor7.3a}}\\[11.0pt]
\text{\Large
\hspace{-0.2cm}$\Whyp{4}{3}{a}{q^{-n}}{q,q^n}$
\hspace{-0.30cm}
%HSC%\\[0.1cm]
}\\[0.1cm]
\end{array}$
}
}
};

\node(Zp)
at (16.4,0.0){
{\setlength{\fboxrule}{0.070cm}
\fbox{
$\begin{array}{cc}
\text{\Large$Z^{+}_n(q)$}
\\[7.2pt]
\text{\Large Corollary \ref{cor7.7}}\\[11.2pt]
\text{\Large
\hspace{-0.2cm}$\Wphyp{4}{3}{2}{a}{q^{-n}}{q,q^{n+1}a}$
\hspace{-0.33cm}
%HSC%\\[0.1cm]
}\\[0.0cm]
\end{array}$
}
}
};

\node(cqiH)
at (22.4,0.0){
{\setlength{\fboxrule}{.0225cm}
\fbox{
$\begin{array}{cc}
\text{\Large$H_n(x|q^{-1})$}
\\[6.0pt]
\text{\Large Corollary \ref{cor7.8}}\\[11.6pt]
\text{\Large
\hspace{-0.2cm}$\Wphyp{4}{3}{4}{a}{q^{-n}}{q,q^{n+2}a^2}\hspace{-0.28cm}$
%HSC%\\[0.1cm]
}\\[0.117cm]
\end{array}$
}
}
};

%%%%% end of bottom level %%%%%
\node(cbqH) 
at (2.8,5.8*1.0){
\fbox{
$\begin{array}{cc}
\text{\Large$H_n(x;a|q)$}
\\[6.5pt]\text{\Large Corollary \ref{cor6.6a}}\\[3.5pt]
\text{\Large$
\hspace{-0.2cm}\Wphyp{5}{4}{-3}{a}{q^{-n},c}{q,\frac{q^{n-1}}{ac}}\hspace{-0.2cm}
$}
\end{array}$
}
};

\node(Ym) 
at (8.6,5.8*1.0){
{\setlength{\fboxrule}{0.070cm}
\fbox{
$\begin{array}{cc}
\text{\Large$Y^{-}_n(a|q)$}
\\[6.5pt]\text{\Large Corollary \ref{cor6.4}}\\[9.0pt]
\text{\Large$
\hspace{-0.2cm}\Wphyp{5}{4}{-1}{a}{q^{-n},c}{q,\frac{q^n}{c}}\hspace{-0.2cm}
$}
\end{array}$
}
}
};

\node(Yp) 
at (14.5,5.8*1.0){
{\setlength{\fboxrule}{0.070cm}
\fbox{
$\begin{array}{cc}
\text{\Large$Y^{+}_n(a|q)$}
\\[6.5pt]\text{\Large Corollary \ref{cor6.8}}\\[1.0pt]
\text{\Large$
%{}_5W_{4,1}
\hspace{-0.2cm}\Wphyp{5}{4}{1}{a}{q^{-n},c}{q,\frac{q^{n+1}a}{c}}\hspace{-0.2cm}
$}
\end{array}$
}
}
};

\node(cbqiH) 
at (20.8,5.8*1.0){
\fbox{
$\begin{array}{cc}
\text{\Large$H_n(x;a|q^{-1})$}
\\[6.5pt]\text{\Large Corollary \ref{cor6.11}}\\[3.5pt]
\text{\Large$
%{}_5W_{4,3}
\hspace{-0.2cm}\Wphyp{5}{4}{3}{a}{q^{-n},c}{q,\frac{q^{n+2}a^2}{c}}\hspace{-0.2cm}
$}
\end{array}$
}
};
%%%%%% second level %%%%%%%%%
\node(ASC) 
at (4.8,5.8*2.0){
\fbox{
$\begin{array}{cc}
\text{\Large$Q_n(x;{\bf a}|q)$}
\\[9.8pt]\text{\Large Corollary \ref{cor:5.8}}\\[7.8pt]
\text{\Large$
%{}_{6}W_{5}^{-2}
\hspace{-0.2cm}\Wphyp{6}{5}{-2}{a}{q^{-n},c,d}{q,\frac{q^{n}}{cd}}\hspace{-0.2cm}
$}
\end{array}$
}
};

\node(X) 
at (2.8*4.00,5.8*2.00){
{\setlength{\fboxrule}{0.070cm}
\fbox{
$\begin{array}{cc}
\text{\Large$X_n({\bf a}|q)$}
\\[5.5pt]\text{\Large Corollary \ref{cor5.6}}\\[2.5pt]
\text{\Large$
%{}_6W_5
\hspace{-0.2cm}\Whyp{6}{5}{a}{q^{-n},c,d}{q,\frac{q^{n+1}a}{cd}}\hspace{-0.2cm}
$}
\end{array}$
}
}
};

\node(qiASC) 
at (18,5.8*2.00){
\fbox{
$\begin{array}{cc}
\text{\Large$Q_n(x;{\bf a}|q^{-1})$}
\\[6.6pt]\text{\Large Corollary \ref{cor5.14}}\\[3.6pt]
\text{\Large$
%{}_6W_{5,2}
\hspace{-0.2cm}\Wphyp{6}{5}{2}{a}{q^{-n},c,d}{q,\frac{q^{n+2}a^2}{cd}}\hspace{-0.2cm}
$}
\end{array}$
}
};

\node(cdqH) 
at (2.8*2.50,5.8*3.00){
\fbox{
$\begin{array}{cc}
\text{\Large$p_n(x;{\bf a}|q)$}
%, Corollary \ref{cor:4.3}}
\\[6.5pt]\text{\Large Corollary \ref{cor:4.3}}\\[3.5pt]
\text{\Large$\Wphyp{7}{6}{-1}{a}{q^{-n},c,d,e}{q,\frac{q^{n+1}a}{cde}}$ }
\end{array}$
}
};

\node(cdqiH) 
at (2.8*5.50,5.8*3.00){
\fbox{
$\begin{array}{cc}
\text{\Large$p_n(x;{\bf a}|q^{-1})$}
\\[6pt]\text{\Large Corollary \ref{cor:4.13}}\\[3.5pt]
\text{\Large$\Wphyp{7}{6}{1}{a}{q^{-n},c,d,e}{q,\frac{q^{n+2}a^2}{cde}}$}
\end{array}$
}
};

\node(AW) 
at (2.8*4.00,5.8*4.00){
\fbox{
$\begin{array}{cc}
\text{\Large$p_n(x;{\bf a}|q)$}
\\[6.5pt]\text{\Large \cite[Corollaries 8, 9]{CohlCostasSantos20b}}\\[3.5pt]
\text{\Large${}_8W_7\left(a;q^{-n},c,d,e,f;q,\frac{q^{n+2}a^2}{cdef}\right)$}
\end{array}$
}
};
\draw[-stealth,line width=1.5pt] (AW)--(cdqH); 
\draw[-stealth,line width=1.5pt] (AW)--(cdqiH);
\draw[-stealth,line width=1.5pt] (cdqH)--(ASC);
\draw[-stealth,line width=1.5pt] (cdqH)--(X);
\draw[-stealth,line width=1.5pt] (cdqiH)--(X);
\draw[-stealth,line width=1.5pt] (cdqiH)--(qiASC);
\draw[-stealth,line width=1.5pt] (ASC)--(cbqH);
\draw[-stealth,line width=1.5pt] (ASC)--(Ym);
\draw[-stealth,line width=1.5pt] (X)--(Ym);
\draw[-stealth,line width=1.5pt] (X)--(Yp);
\draw[-stealth,line width=1.5pt] (qiASC)--(Yp);
\draw[-stealth,line width=1.5pt] (qiASC)--(cbqiH);
\draw[-stealth,line width=1.5pt] (cbqH)--(cqH);
\draw[-stealth,line width=1.5pt] (cbqH)--(Zm);
\draw[-stealth,line width=1.5pt] (Ym)--(Zm);
\draw[-stealth,line width=1.5pt] (Ym)--(Z);
\draw[-stealth,line width=1.5pt] (Yp)--(Z);
\draw[-stealth,line width=1.5pt] (Yp)--(Zp);
\draw[-stealth,line width=1.5pt] (cbqiH)--(Zp);
\draw[-stealth,line width=1.5pt] (cbqiH)--(cqiH);

\path(AW)->(cdqH) node[draw=none,pos=0.05,left=0.1pt]{\Large$f\!\to\!0$};
\path(AW)->(cdqiH) node[draw=none,pos=0.05,right=0.1pt]{\Large$f\!\to\!\infty$};

\path (cdqH)->(ASC) node[draw=none, pos=0.05,left=0.1pt]{\Large$e\!\to\! 0$};
\path (cdqH)->(X) node[draw=none, pos=0.05,right=0.1pt]{\Large$e\!\to\! \infty$};
\path (cdqiH)->(X) node[draw=none, pos=0.05,left=0.1pt]{\Large$e\!\to\!0$};
\path (cdqiH)->(qiASC) node[draw=none, pos=0.05,right=0.1pt]{\Large$e\!\to\!\infty$};

\path (ASC)->(cbqH) node[draw=none, pos=0.05,left=0.1pt]{\Large$d\!\to\!0$};
\path (ASC)->(Ym) node[draw=none,pos=0.065,right=0.1pt]{\Large$d\!\to\!\infty$};
\path (X)->(Ym) node[draw=none,pos=0.05,left=0.1pt]{\Large$d\!\to\!0$};
\path (X)->(Yp) node[draw=none,pos=0.05,right=0.1pt]{\Large$d\!\to\!\infty$};
\path (qiASC)->(Yp) node[draw=none,pos=0.05,left=0.1pt]{\Large$d\!\to\!0$};
\path (qiASC)->(cbqiH) node[draw=none,pos=0.05,right=0.1pt]
{\Large$d\!\to\!\infty$};

\path (cbqH)->(cqH) node[draw=none, pos=0.05,left=0.1pt]{\Large$c\!\to\! 0$};
\path (cbqH)->(Zm) node[draw=none, pos=0.075,right=0.1pt]{\Large$c\!\to\! \infty$};
\path (Ym)->(Zm) node[draw=none, pos=0.05,left=0.1pt]{\Large$c\!\to\! 0$};
\path (Ym)->(Z) node[draw=none, pos=0.075,right=0.1pt]{\Large$c\!\to\! \infty$};
\path (Yp)->(Z) node[draw=none, pos=0.05,left=0.1pt]{\Large$c\!\to\! 0$};
\path (Yp)->(Zp) node[draw=none, pos=0.065,right=0.1pt]{\Large$c\!\to\!\infty$};
\path (cbqiH)->(Zp) node[draw=none, pos=0.075,left=0.0pt]{\Large$c\!\to\!0$};
\path (cbqiH)->(cqiH) node[draw=none, pos=0.05,right=0.1pt]{\Large$c\!\to\!\infty$};
\end{tikzpicture}
\label{figure1}
%\end{sidewaysfigure}
\end{figure}

As explained in \cite{CohlCostasSantos20b}, the total 
number of transformations of a balanced terminating ${}_4\phi_3$ which corresponds to the Askey--Wilson polynomials is 720. As explained in \cite{VanderJeugtRao1999}, this corresponds to the number of elements of the symmetric group $S_6$ with order $|S_6|=720$. In \cite[Proposition 16]{CohlCostasSantos20b}, it was shown that the total number of transformations of the terminating very-well-poised ${}_8W_7$ is the same number and also corresponds to the symmetric group $S_6$. 

When considering the continuous dual $q$ and $q^{-1}$-Hahn polynomials, these correspond to terminating ${}_3\phi_2$ and according to \cite{VanderJeugtRao1999} (see also \cite{Raoetal1992}), the symmetry group of these functions, a subgroup of $S_6$, is a nonsimple group of order 72 which can be described in terms of extended symmetries of the hexagon. Examining Corollary \ref{cor:4.3} one can see that the total number of permutations and rearrangements for the ${}_7W_6^{-1}$'s given in \eqref{cor4.3:r1}--\eqref{cor4.3:r10x} is 36 $(6\times 6)$ and for \eqref{cor4.3:r3} which is a ${}_7W_6^1$ is also 36, then combined together we obtain 72. The same is true for Corollary \ref{cor:4.13}. One can see that the total number of permutations and rearrangements for the ${}_7W_6^1$'s given in \eqref{cor:4.13-0}--\eqref{cor:4.13-10x} is 36 $(6\times 6)$ and for \eqref{cor:4.13-2} which is a ${}_7W_6^{-1}$ is also 36. Then combined together we obtain 72. So the symmetry group of each grouping corresponds to $|S_3\times S_3|=36$ (see \cite[\S IV]{VanderJeugtRao1999}). One might consider that this symmetry group is $(S_3\times S_3) \rtimes C_2$, where $C_2$ is the cyclic group of order two and $\rtimes$ is the semi-direct product. This group is a subgroup of $S_6$ and has order $72$.

If one considers the transformation groups of the terminating $q$ and $q^{-1}$-Al-Salam--Chihara polynomials, then we can see that both of these correspond to a terminating ${}_2\phi_1$. The symmetry group of the nonterminating ${}_2\phi_1$ has is known to correspond to the dihedral group of order 12, $D_6$. However for the terminating ${}_2\phi_1$'s which we are encountering the symmetry group must be a subgroup of $S_6$. Examining Corollary \ref{cor:5.8} one can see that the total number of permutations and rearrangements for the ${}_6W_5^{-2}$'s given in \eqref{cor:5.8.1}--\eqref{cor:5.8.2} is 4 $(2\times 2)$ and for \eqref{cor:5.8.3} which is a ${}_6W_4^2$ is also 4, then combined together we obtain 8. 
The same is true for Corollary \ref{cor5.14b}. One can see that the total number of permutations and rearrangements for the ${}_6W_5^2$'s given in \eqref{cor5.14.1}--\eqref{cor5.14.2} is 4 $(2\times 2)$ and for \eqref{cor5.14.3} which is a ${}_6W_5^{-2}$ is also 4. Then combined together we obtain 8. 
In this case we conjecture that the symmetry group of the terminating ${}_2\phi_1$ which corresponds to the 2-variable symmetric polynomials in the $q$-Askey scheme corresponds to $|D_4|=8$ which is known to be a subgroup of $S_6$. One should also consider the rearrangements and permutations of \eqref{cor:5.8.4}--\eqref{cor:5.8.11}, and as well for \eqref{cor5.14.9}--\eqref{cor5.14.11}. One can see that for each of these equations the numbers are 2 for a total of 16. So these must break down into two separate groupings each. 

In the one parameter case which corresponds to the continuous big $q$ and big $q^{-1}$-Hermite polynomials and the terminating ${}_5W_4^{-3}$ and ${}_5W_4^3$, the orders are 2 since there are two separate representations each with one one possible rearrangement for each. In the zero parameter case which corresponds to the continuous $q$ and $q^{-1}$-Hermite polynomials and the terminating ${}_4W_3^{-4}$ and ${}_4W_3^4$, the orders are 2 as there are two representations only one possible rearrangement for each. It is not yet clear what the symmetry groups for these might be.

\section*{Acknowledgments}
H. S. C.~would like to thank Mourad Ismail, 
Tom Koornwinder, 
Eric Rains,
Michael Schlosser
and Joris Van der Jeugt 
for valuable discussions. 
Also thanks to Tom Koornwinder for 
communicating to HSC that the transformations 
referred to in this paper 
should match to
a ($q$-)Askey scheme of symmetry
groups, parallel to the ($q$-)Askey scheme of
(basic) hypergeometric orthogonal polynomials. 
%\newpage
%%%%\section*{References}

%\bibliographystyle{plain}
%\bibliography{refbib}

%%\bibliography{../refbib} % Roberto bib
%\bibliography{/home/hcohl/tex/refbib} % Howard bib yes!

\def\cprime{$'$} \def\dbar{\leavevmode\hbox to 0pt{\hskip.2ex \accent"16\hss}d}

\end{document}